\numberwithin{equation}{section}
\newtheorem{lemma}{Lemma}[section]
\newtheorem{theorem}{Theorem}[section]
\newtheorem{proposition}{Proposition}[section]
\newtheorem{corollary}{Corollary}[section]
\newtheorem{definition}{Definition}[section]
\newtheorem{remark}{Remark}[section]
\renewcommand{\d}{\mathrm d}
\newcommand{\e}{\mathrm e}
\renewcommand{\i}{\mathrm i}
\renewcommand{\o}{\mathrm o}
\newcommand{\Or}{\mathrm O}
\renewcommand{\Im}{\operatorname{Im}}
\newcommand{\supp}{\operatorname{supp}}
\newcommand{\res}{\operatorname{res}}
\newcommand{\rank}{\operatorname{rank}}
\newcommand{\lin}{\operatorname{lin}}
\newcommand{\bq}{\mathbf q}
\newcommand{\Ran}{\operatorname{Ran}}
\newcommand{\slim}{\operatornamewithlimits{s-lim}}
\newenvironment{proof}{\smallskip\noindent{\bf Proof.}\rm}{\hspace*{\fill} $\Box$\medskip}
\newenvironment{proofsketch}{\smallskip\noindent{\it Sketch of the proof.}\rm}{\hspace*{\fill} $\Box$\medskip}
\newenvironment{proofTh1}{\smallskip\noindent{\it Proof of Theorem~\ref{Th1}.}\rm}{\hspace*{\fill} $\Box$\medskip}
\newenvironment{proofTh2}{\smallskip\noindent{\it Proof of Theorem~\ref{Th2}.}\rm}{\hspace*{\fill} $\Box$\medskip}
\newenvironment{proofTh3}{\smallskip\noindent{\it Proof of Theorem~\ref{Th3}.}\rm}{\hspace*{\fill} $\Box$\medskip}
\newenvironment{proofTh4}{\smallskip\noindent{\it Proof of Theorem~\ref{Th4}.}\rm}{\hspace*{\fill} $\Box$\medskip}
\newenvironment{proofB1}{\smallskip\noindent{\it Proof of Proposition~\ref{B1Lemma}.}\rm}{\hspace*{\fill} $\Box$\medskip}
\newenvironment{proofB2}{\smallskip\noindent{\it Proof of Proposition~\ref{B2Lemma}.}\rm}{\hspace*{\fill} $\Box$\medskip}
\newenvironment{proofB3}{\smallskip\noindent{\it Proof of Theorem~\ref{B3Th}.}\rm}{\hspace*{\fill} $\Box$\medskip}
\title{Inverse spectral problems for Dirac operators\\ with summable matrix-valued potentials}
\author{
D.~V.~Puyda\thanks{\emph{Email addresses:} dpuyda@gmail.com (D.~V.~Puyda)}\\
\emph{Ivan Franko National University of Lviv}\\
\emph{1 Universytetska str., Lviv, 79000, Ukraine}
}
\date{}
\begin{document}

\maketitle

\begin{abstract}
We solve the direct and inverse spectral problems for Dirac operators on $(0,1)$ with matrix-valued potentials whose entries belong to $L_p(0,1)$, $p\in[1,\infty)$. We give a complete description of the spectral data (eigenvalues and suitably introduced norming matrices) for the operators under consideration and suggest a method for reconstructing the operator from the spectral data.
\end{abstract}

\section{Introduction}

The aim of the present paper is to solve the direct and inverse spectral problems for self-adjoint Dirac operators generated by the differential expressions
$$
\mathfrak t_q:=
\frac{1}{\i}\begin{pmatrix}I & 0\\0 & -I\end{pmatrix}\frac{\d}{\d x}+
\begin{pmatrix}0 & q\\q^* & 0\end{pmatrix}
$$
and some separated boundary conditions. Here $q$ is an $r\times r$ matrix-valued function with entries belonging to $L_p(0,1)$, $p\in[1,\infty)$, called the potential of the operator, and $I$ is the identity $r\times r$ matrix. For such Dirac operators, we introduce the notion of the spectral data -- eigenvalues and specially defined norming matrices. We then give a complete description of the class of the spectral data for the operators under consideration, show that the spectral data determine the operator uniquely and provide an efficient method for reconstructing the operator from the spectral data.

Direct and inverse spectral problems for Sturm--Liouville and Dirac operators \cite{LevSargs1,Marchenko,Thaller} have been studied over the last 50 years. Already in 1966, M.~Gasymov and B.~Levitan suggested using the spectral function or the scattering phase for reconstructing Dirac operators on a half-line \cite{GasLev1,GasLev2}. Ever since, direct and inverse spectral problems for Dirac operators, including the systems of higher orders, have been considered in many papers.
Among the recent investigations in that area we mention the ones by M.~Malamud et al. \cite{Malamud1,Malamud2,Malamud3}, F.~Gesztesy et al. \cite{ClarkGesz,KisMakGesz,GeszOpV}, A.~Sakhnovich \cite{Sakh1,Sakh2}. Problems similar to the ones considered in this paper were recently treated by D.~Chelkak and E.~Korotyaev for Sturm-Liouville operators with matrix-valued potentials \cite{Korot1,Korot2}. We refer the reader to the references in \cite{Malamud1}--\cite{Korot2} for further results on the subject.

The direct and inverse spectral problems for Dirac operators with summable \emph{scalar} potentials were solved in \cite{MykDirac}, where an algorithm for reconstructing the operator from two spectra or from one spectrum and the norming constants was suggested. Later, using the technique that was suggested in \cite{sturm} for solving the inverse spectral problems for Sturm--Liouville operators with matrix-valued potentials, the case of Dirac operators with square-integrable matrix-valued potentials was considered in \cite{dirac1}. Therein, a complete description of the spectral data was given and a method for reconstructing the operator from the spectral data was suggested.

In this paper we extend the results of \cite{dirac1} to the case of Dirac operators with summable matrix-valued potentials. However, as compared with the special case $q\in L_2$, it turns out that more general one $q\in L_p$, $p\ge1$, meets some conceptual and technical difficulties.

In particular, the description of the spectral data in \cite{dirac1} involves a precise asymptotics of eigenvalues and norming matrices, which is not available for the operators with summable matrix-valued potentials. Namely, the proofs of the asymptotics in \cite{dirac1} were based on the results of \cite{trushzeros, myktrushzeros} that are applicable only for $p=2$. To prove similar statements for $p\ge1$ one may need analogues of the results of \cite{mykhrynzeros} covering matrix-valued functions, but to the best of the author's knowledge these are not known yet. Furthermore, it turns out that we cannot separate the asymptotics of eigenvalues and the asymptotics of norming matrices under the present assumptions on the potential.

Instead, as opposed to the case $p=2$, we formulate the description of the spectral data involving an auxiliary object which is the restriction of the Fourier transform of the spectral measure (see Theorem~\ref{Th1} and Definition~\ref{Def1} below and compare with Theorem~1.1 in \cite{dirac1}). The latter is easy to construct given the spectral data and turns out to contain all the information about the potential.

Further, as compared with the case $p=2$, while dealing with $p\ge1$ one also has to reconsider another condition involved in the description of the spectral data, i.e. the one on ranks of the norming matrices. Although the claim of the condition remains the same, technique of the proof that was used for $p=2$ fails for $p\ge1$. Instead, we introduce the approach based on the theory of Riesz bases. In particular, new our approach uses a vector analogue of well-known Kadec's $1/4$-theorem (see Appendix~\ref{AppRiesz}), which plays an auxiliary role in this paper but may be used while solving other problems having to do with vector-valued functions.

From the practical point of view, a method for reconstructing the operator from the spectral data appears the same as for operators with square-integrable potentials and essentially consists in solving certain integral equation (see Theorem~\ref{Th4} and further comments on it). The paper is theoretical, but the results can be used in practical applications where the inverse spectral problems for Dirac operators with matrix-valued potentials arise. For instance, inverse problems for quantum graphs, which are of practical importance in nanotechnology, microelectronics, etc. (see, e.g., [22]) in some cases can be reduced to the inverse problems for operators with matrix-valued potentials. As compared with the case considered in \cite{dirac1}, the results of the present paper allow stronger singularities of potentials which is important for applications.

The paper is organized as follows. In the reminder of this section we give a precise setting of the problem and formulate the main results. In Section~2 we provide some preliminaries: we introduce the Weyl--Titchmarsh function and establish the basic properties of the operators under consideration. In Sections~3 and 4, respectively, we solve the direct and inverse spectral problems. Several spaces used in this paper are introduced in Appendix~\ref{AppSpaces}. In Appendix~\ref{AppFact} we recall some facts on the factorization of integral operators. Finally, Appendix~\ref{AppRiesz} is devoted to some auxiliary facts on the theory of Riesz bases.

\subsection{Setting of the problem}\label{ProblSettingSubsect}

Firstly, we introduce the space of potentials for Dirac operators considered in this paper. Let $M_r$ denote the set of $r\times r$ matrices with complex entries, which we identify with the Banach algebra of linear operators $\mathbb C^r\to\mathbb C^r$ endowed with the standard norm. We write $I=I_r$ for the unit element of $M_r$ and $M_r^+$ for the set of all matrices $A\in M_r$ such that $A=A^*\ge0$.
We set
$$
\mathfrak Q_p:=L_p((0,1),M_r),\qquad p\in[1,\infty),
$$
and endow $\mathfrak Q_p$ with the norm $\|q\|_{\mathfrak Q_p}:=\left(\int_0^1\|q(s)\|^p\ \d s\right)^{1/p}$, $q\in\mathfrak Q_p$. The space $\mathfrak Q_p$ will serve as the \emph{space of potentials} for Dirac operators under consideration.

Let $q\in\mathfrak Q_p$, $p\in[1,\infty)$, denote
$$
\vartheta:=\frac{1}{\i}\begin{pmatrix}I&0\\0&-I\end{pmatrix},\qquad \bq:=\begin{pmatrix}0&q\\q^*&0\end{pmatrix}
$$
and consider the differential expression
\begin{equation}\label{DiffExpr}
\mathfrak t_q:=\vartheta\frac{\d}{\d x}+\bq
\end{equation}
on the domain
$$
D(\mathfrak t_q)=\{y:=(y_1, \ y_2)^\top \ | \ y_1,y_2\in W_1^1((0,1),\mathbb C^r) \},
$$
where $W_1^1((0,1),\mathbb C^r)$ is the Sobolev space.
The object of our investigation is the \emph{self-adjoint Dirac operator $T_q$} generated by the differential expression (\ref{DiffExpr}) and some separated boundary conditions:
$$
T_qy=\mathfrak t_q(y),
$$
$$
D(T_q)=\{y\in D(\mathfrak t_q) \ | \
\mathfrak t_q(y)\in L_2((0,1),\mathbb C^{2r}),\ y_1(0)=y_2(0), \ y_1(1)=y_2(1)\}.
$$
The function $q\in\mathfrak Q_p$ will be called the {\it potential} of the operator $T_q$.

The spectrum $\sigma(T_q)$ of the operator $T_q$ consists of countably many isolated real eigenvalues of finite multiplicity, accumulating only at $+\infty$ and $-\infty$. We denote by $\lambda_j(q)$, $j\in\mathbb Z$, the pairwise distinct eigenvalues of the operator $T_q$ labeled in increasing order so that $\lambda_0(q)\le0<\lambda_1(q)$, i.e.
$$
\sigma(T_q)=\{\lambda_j(q)\}_{j\in\mathbb Z},\qquad \lambda_0(q)\le0<\lambda_1(q).
$$

Further, denote by $m_q$ the Weyl--Titchmarsh function of the operator $T_q$ (see, e.g., \cite{ClarkGesz}). The function $m_q$ is a matrix-valued meromorphic Herglotz function (i.e. such that $\Im m_q(\lambda)\ge0$ whenever $\Im\lambda>0$), and $\{\lambda_j(q)\}_{j\in\mathbb Z}$ is the set of its poles. We set
\begin{equation}\label{NormMatrDef}
\alpha_j(q):=-\underset{\lambda=\lambda_j(q)}\res m_q(\lambda),\qquad j\in\mathbb Z,
\end{equation}
and call $\alpha_j(q)$ the {\it norming matrix} of the operator $T_q$ corresponding to the eigenvalue $\lambda_j(q)$. For every $j\in\mathbb Z$, the norming matrix $\alpha_j(q)$ is non-negative and multiplicity of the eigenvalue $\lambda_j(q)$ equals the rank of $\alpha_j(q)$.

The sequence $\mathfrak a_q:=((\lambda_j(q),\alpha_j(q)))_{j\in\mathbb Z}$ will be called the {\it spectral data} of the operator $T_q$, and the matrix-valued measure
\begin{equation}\label{SpectrMeasDef}
\mu_q:=\sum\limits_{j=-\infty}^{\infty}\alpha_j(q)\delta_{\lambda_j(q)}
\end{equation}
will be called its {\it spectral measure}. Here $\delta_\lambda$ is the Dirac delta-measure centered at the point $\lambda$. In particular, if $q=0$, then
\begin{equation}\label{Mu0Meas}
\mu_0=\sum\limits_{n=-\infty}^{\infty}I\delta_{\pi n}.
\end{equation}

We give a complete description of the class
$$
\mathfrak A_p:=\{\mathfrak a_q \ | \ q\in\mathfrak Q_p\}
$$
of the spectral data for the operators under consideration, which is equivalent to the description of the class $\mathfrak M_p:=\{\mu_q \ | \ q\in\mathfrak Q_p\}$ of the spectral measures. We then show that the spectral data of the operator $T_q$ determine the potential $q$ uniquely and suggest a method for reconstructing this potential from the spectral data.

\subsection{Main results}

We start from the description of the spectral data for the operators under consideration.
In what follows, $\mathfrak a$ will stand for an arbitrary sequence pretending to be the spectral data of some Dirac operator $T_q$, i.e. $\mathfrak a:=((\lambda_j,\alpha_j))_{j\in\mathbb Z}$, where $(\lambda_j)_{j\in\mathbb Z}$ is a strictly increasing sequence of real numbers such that $\lambda_0\le0<\lambda_1$ and $\alpha_j$, $j\in\mathbb Z$, are non-zero matrices in $M_r^+$.

Given the sequence $\mathfrak a$, denote by $\mu^{\mathfrak a}$ a matrix-valued measure given by
\begin{equation}\label{MuAdef}
\mu^{\mathfrak a}:=\sum\limits_{j=-\infty}^\infty \alpha_j\delta_{\lambda_j}
\end{equation}
and pretending to be the spectral measure.
With every measure $\mu:=\mu^{\mathfrak a}$ of the form (\ref{MuAdef}) we associate a $\mathbb C^r$-valued distribution
$$
(\mu,f):=\int\limits_{\mathbb R} f \ \d\mu,\qquad f\in\mathcal S^r,
$$
where $\mathcal S^r$ is the Schwartz space of rapidly decreasing $\mathbb C^r$-valued functions (see Appendix~\ref{AppSpaces}). Now we introduce a kind of the Fourier transform of $\mu^{\mathfrak a}$:

\begin{definition}\label{Def1}
For an arbitrary measure $\mu:=\mu^{\mathfrak a}$ of the form (\ref{MuAdef}) we denote by $\widehat\mu$ a $\mathbb C^r$-valued distribution given by the formula
$$
(\widehat\mu,f):=(\mu,\widehat f),\qquad f\in\mathcal S^r,
$$
where
\begin{equation}\label{Ftransform}
\widehat f(\lambda):=\int\limits_{-\infty}^\infty \e^{2\i\lambda s}f(s)\ \d s,
\qquad \lambda\in\mathbb R.
\end{equation}

We denote by $H_\mu$ the restriction of the distribution $\widehat\mu-\widehat\mu_0$ to the interval $[-1,1]$, i.e.
\begin{equation}\label{AccFirstDef}
(H_\mu,f):=(\widehat\mu-\widehat\mu_0,f),\qquad f\in\mathcal S^r,\ \ \supp f\subset[-1,1],
\end{equation}
where $\mu_0$ given by (\ref{Mu0Meas}) is the spectral measure of the free operator $T_0$.
\end{definition}

The distribution $H_\mu$, $\mu:=\mu^{\mathfrak a}$, plays an important role in establishing whether the sequence $\mathfrak a$ is the spectral data of some operator $T_q$. Namely, we partition the real axis into pairwise disjoint intervals $\Delta_n$, $n\in\mathbb Z$, by setting
$$
\Delta_n:=\left(\pi n-\frac{\pi}{2},\pi n+\frac{\pi}{2}\right].
$$
Then the following theorem gives a complete description of the class $\mathfrak A_p$ of the spectral data for Dirac operators under consideration:

\begin{theorem}\label{Th1}
In order that a sequence $\mathfrak a:=((\lambda_j,\alpha_j))_{j\in\mathbb Z}$ should belong to $\mathfrak A_p$, $p\ge1$, it is necessary and sufficient that the following conditions are satisfied:
\begin{itemize}
\item[$(B_1)$]$\sum_{\lambda_j\in\Delta_n}|\pi n-\lambda_j|=o(1)$ and
$\|I-\sum_{\lambda_j\in\Delta_n}\alpha_j\|=o(1)$ as $|n|\to\infty$;

\item[$(B_2)$]$\exists N_0\in\mathbb N \ \ \forall N\in \mathbb N:\ N>N_0\ \Rightarrow \ \sum_{n=-N}^{N}\sum_{\lambda_j\in\Delta_n}\rank \alpha_j=(2N+1)r$;

\item[$(B_3)$]the system of functions $\{ \e^{\i\lambda_jt}d\ | \ j\in\mathbb{Z},\ d\in\mathrm{Ran}\ \alpha_j \}$ is complete in $L_2((-1,1),\mathbb C^r)$;

\item[$(B_4)$]the distribution $H_\mu$, $\mu:=\mu^{\mathfrak a}$, belongs to $L_p((-1,1),M_r)$.
\end{itemize}
\end{theorem}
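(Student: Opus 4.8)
The plan is to prove the equivalence in two halves, matching the organization of Sections~3 and~4: the direct claim (necessity) that every $\mathfrak a_q$ with $q\in\mathfrak Q_p$ satisfies $(B_1)$--$(B_4)$, and the inverse claim (sufficiency) that every $\mathfrak a$ obeying $(B_1)$--$(B_4)$ equals $\mathfrak a_q$ for some $q\in\mathfrak Q_p$. The object tying the two halves together is the transformation operator $I+K$ intertwining $T_q$ with the free operator $T_0$: its kernel encodes $q$, and writing the eigenfunctions of $T_q$ through those of $T_0$ (whose eigenvalues are $\pi n$ with norming matrices $I$) converts analytic facts about the spectrum into facts about the exponential system $\{\e^{\i\lambda_j t}d\}$ and about the Fourier image $H_\mu$ of $\mu_q-\mu_0$.

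For necessity I would first construct $I+K$ and show, via the Weyl--Titchmarsh function $m_q$ and a Paley--Wiener type argument, that its kernel inherits the $L_p$ regularity of $q$; since $H_\mu$ is expressible through this kernel, this yields $(B_4)$. Comparing the eigenfunction expansion of $T_q$ with that of $T_0$ gives the clustering of $\lambda_j(q)$ near $\pi n$ and the convergence $\sum_{\lambda_j\in\Delta_n}\alpha_j\to I$, i.e. $(B_1)$, while completeness of the eigenfunctions of the self-adjoint operator $T_q$, transported through $I+K$, is precisely $(B_3)$.

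The delicate point in the direct problem is $(B_2)$, and I expect it to be the main obstacle. In the square-integrable case one reads the total multiplicity in a symmetric window $\bigcup_{|n|\le N}\Delta_n$ off sharp eigenvalue asymptotics, but such asymptotics are unavailable for $q\in L_p$. Instead I would use the Riesz-basis machinery of Appendix~\ref{AppRiesz}: the $(B_1)$ clustering together with the vector analogue of Kadec's $1/4$-theorem shows that, after a finite-rank perturbation handling the finitely many frequencies not yet within the Kadec threshold, the system $\{\e^{\i\lambda_j t}d : d\in\Ran\alpha_j\}$ is a Riesz basis of $L_2((-1,1),\mathbb C^r)$. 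A dimension count against the free orthogonal system $\{\e^{\i\pi n t}d\}$, which carries exactly $r$ dimensions per window, then forces the total rank in each symmetric window to equal $(2N+1)r$ for all large $N$, which is $(B_2)$. The difficulty is exactly that a computation based on precise asymptotics must be replaced by a basis-theoretic argument.

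For sufficiency I reverse the construction. From $\mathfrak a$ I form $\mu=\mu^{\mathfrak a}$ and, by $(B_4)$, the symbol $H_\mu\in L_p((-1,1),M_r)$, out of which I build an integral operator $\mathcal F$ on $L_2((-1,1),\mathbb C^r)$. The non-negativity of every $\alpha_j$ makes $I+\mathcal F$ non-negative, and completeness $(B_3)$ makes it injective, hence positive definite; the factorization theory of Appendix~\ref{AppFact} then supplies a Volterra factorization $I+\mathcal F=(I+K)^*(I+K)$. The kernel $K$ has class $L_p$ by $(B_4)$, and $q$ is recovered from its diagonal values $K(x,x)$, so $q\in\mathfrak Q_p$. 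It remains to verify that the operator $T_q$ thus obtained has Weyl--Titchmarsh function $m_q$ with poles exactly $\{\lambda_j\}$ and residues $-\alpha_j$; here $(B_1)$--$(B_3)$ re-enter to identify the spectrum and to pin down the ranks of the reconstructed norming matrices, which closes the loop and gives $\mathfrak a\in\mathfrak A_p$.
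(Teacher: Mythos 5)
Your overall architecture coincides with the paper's (transformation operators and the Weyl--Titchmarsh function for necessity; the accelerant $H_\mu$, Krein factorization and the vector Kadec theorem for sufficiency), but your derivation of $(B_2)$ in the direct problem has a genuine gap. You claim that the clustering condition $(B_1)$ together with the vector analogue of Kadec's theorem already makes $\{\e^{\i\lambda_jt}d \ | \ d\in\Ran\alpha_j\}$, after a finite modification, a Riesz basis. But the Kadec-type result (Theorem~\ref{KadecTh}) applies only to systems satisfying condition $(R_0)$: each window $\Delta_n$ must carry \emph{exactly} $r$ exponentials whose vectors form a uniformly well-conditioned basis of $\mathbb C^r$, i.e.\ one needs $\sum_{\lambda_j\in\Delta_n}\rank\alpha_j=r$ for all large $|n|$. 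Condition $(B_1)$ gives only the lower bound: since $\sum_{\lambda_j\in\Delta_n}\alpha_j$ is close to $I$ it is invertible, so the ranges of the $\alpha_j$ span $\mathbb C^r$ and the rank sum is at least $r$; it does not exclude the rank sum exceeding $r$. For instance, with $r=2$, three rank-one matrices $\tfrac23 P_k$, where $P_k$ are the orthogonal projections onto three directions at mutual angle $2\pi/3$, sum exactly to $I$, so $(B_1)$ is compatible with three rank-one eigenvalues in a single window. Ruling this out for genuine spectral data is precisely the content of Lemma~\ref{B2multLemma} and Corollary~\ref{SumRankLemma}: one proves that $\rank\alpha_j$ equals the multiplicity of $\lambda_j$ as a zero of $\det s_q$ --- using that $m_q=-s_q^{-1}c_q$ has only first-order poles and that $\det c_q$ has no zeros near $\lambda_j$ for large $|\lambda_j|$ --- and then counts the zeros of $\det s_q$ in each window by a Rouch\'e argument. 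Without this function-theoretic multiplicity input your dimension count cannot start, since the Riesz basis it is counted against is not yet available.

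A second, smaller gap concerns $(B_4)$: you assert that ``$H_\mu$ is expressible through the kernel'' of the transformation operator as if this were a Paley--Wiener-type remark. But $H_\mu$ is defined as the restriction of the \emph{distribution} $\widehat\mu_q-\widehat\mu_0$, and identifying it with the regular $L_p$ function produced by factorizing $(\mathscr I+\mathscr K_q)^{-1}(\mathscr I+{\mathscr K_q}^*)^{-1}$ is the hardest part of the direct problem: the paper needs Theorem~\ref{Th3} (the Krein map is a homeomorphism of $\mathfrak H_p$ onto $\mathfrak Q_p$), an approximation of $q$ by continuous potentials, and the convergence of the Weyl functions on large circles (Lemmas~\ref{MLimLemma} and~\ref{WeakConvProp}) to pass from the known continuous/$L_2$ case. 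Your sufficiency half is sound in outline and matches the paper (non-negativity of the $\alpha_j$ plus completeness $(B_3)$ give positivity, then factorization and $q=\Theta(H)$), with two caveats: $q$ is recovered as $\i R_H(\cdot,0)$ from the Krein equation, not from a diagonal value of the factor's kernel, and the final identification $\mathfrak a=\mathfrak a_q$ is not a routine check of poles and residues --- it again requires the Riesz-basis machinery to show that the operators $\Phi_q(\lambda_j)\alpha_j\Phi_q^*(\lambda_j)$ form a complete system of pairwise orthogonal projectors, with $(B_1)$ and $(B_2)$ now entering as hypotheses in place of the multiplicity lemma.
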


\begin{remark}
Thus, in particular, the distribution $H_\mu$, $\mu:=\mu^{\mathfrak a}$, turns out to be regular for all $\mathfrak a\in\mathfrak A_p$. In this case, $H_\mu(x)$ can be formally defined by the formula
\begin{equation}\label{Hseries}
H_\mu(x)=\sum_{n\in\mathbb Z}\left(\sum_{\lambda_j\in\Delta_n}\e^{2\i\lambda_jx}\alpha_j
-\e^{2\i\pi nx}I\right),\qquad x\in(-1,1),
\end{equation}
but the convergence of this series in $L_p((-1,1),M_r)$, $p\ge1$, is difficult to establish without knowing the precise asymptotics of eigenvalues and norming matrices. The proof for the case $p=2$ is given in \cite{dirac1}.
\end{remark}

Further, it turns out that the spectral data of the operator $T_q$ determine the potential $q$ uniquely:

\begin{theorem}\label{Th2}
For all $p\ge1$, the mapping $\mathfrak Q_p\owns q\mapsto\mathfrak a_q\in\mathfrak A_p$ is bijective.
\end{theorem}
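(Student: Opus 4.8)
The plan is to observe first that surjectivity onto $\mathfrak A_p$ is automatic, since $\mathfrak A_p$ is \emph{defined} to be the image $\{\mathfrak a_q\ |\ q\in\mathfrak Q_p\}$ of the map $q\mapsto\mathfrak a_q$. Thus the entire content of the theorem is \emph{injectivity}: one must show that the spectral data determine the potential, i.e. that $\mathfrak a_q=\mathfrak a_{\tilde q}$ forces $q=\tilde q$. Now $\mathfrak a_q=\mathfrak a_{\tilde q}$ means that the two operators share all eigenvalues $\lambda_j$ and all norming matrices $\alpha_j$, so it gives $\mu_q=\mu_{\tilde q}$ and hence, by Definition~\ref{Def1}, $H_{\mu_q}=H_{\mu_{\tilde q}}$. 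It therefore suffices to prove that the single object $H_{\mu_q}$ already determines $q$.

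To extract $q$ from $H_{\mu_q}$ I would use a transformation operator. Let $\phi_q(\cdot,\lambda)$ be the solution of $\mathfrak t_q\phi=\lambda\phi$ satisfying the boundary condition at $0$, and let $\phi_0$ be the corresponding free solution. The plan is to construct a Volterra kernel $K_q$ with $\phi_q(x,\lambda)=\phi_0(x,\lambda)+\int_0^x K_q(x,t)\phi_0(t,\lambda)\,\d t$, and to recover $q$ from the diagonal values $K_q(x,x)$ (this is where the off-diagonal block structure of $\bq$ enters). The link to the spectral data comes from the eigenfunction expansion: writing the Parseval relation for the eigenfunctions of $T_q$ against $\mu_q$ and subtracting the analogous relation for $T_0$ and $\mu_0$, the free term that appears is precisely the kernel built from $H_{\mu_q}$. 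This yields a Gelfand--Levitan--Marchenko equation of the schematic form $K_q+F_\mu+K_q F_\mu=0$, where the integral operator $F_\mu$ depends only on $H_{\mu_q}$. Since $H_{\mu_q}=H_{\mu_{\tilde q}}$ produces the same $F_\mu$, unique solvability of this equation forces $K_q=K_{\tilde q}$, and hence $q=\tilde q$.

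The hard part will be establishing unique solvability of this equation in the $L_p$ setting, $p\ge1$, where the Hilbert-space Fredholm arguments available for $p=2$ are not at hand and the precise asymptotics of $\lambda_j$ and $\alpha_j$ are unavailable. Here I would invoke conditions $(B_1)$--$(B_3)$ of Theorem~\ref{Th1}: the clustering of the $\lambda_j$ near $\pi n$ from $(B_1)$, together with the rank/counting condition $(B_2)$ and the completeness $(B_3)$, should, via the vector-valued Kadec-type theorem of Appendix~\ref{AppRiesz}, guarantee that the system $\{\e^{\i\lambda_jt}d\ |\ j\in\mathbb Z,\ d\in\Ran\alpha_j\}$ is a Riesz basis of $L_2((-1,1),\mathbb C^r)$. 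This Riesz-basis property is what should make the transformation operator $I+\mathcal K_q$ boundedly invertible and $F_\mu$ a well-controlled perturbation, so that the equation is uniquely solvable with the resulting kernel lying in the space dictated by $(B_4)$, giving $q\in\mathfrak Q_p$.

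A technically lighter alternative I would keep in reserve is to bypass the integral equation: the measure $\mu_q$ determines the matrix Herglotz function $m_q$ through its Nevanlinna representation (the $\lambda_j$ are the poles and the $-\alpha_j$ the residues, the remaining entire part being pinned down by the known high-energy behaviour of the Weyl--Titchmarsh function), after which a Borg--Marchenko type uniqueness theorem for Dirac operators would give $m_q=m_{\tilde q}\Rightarrow q=\tilde q$. The obstacle then migrates to proving Borg--Marchenko uniqueness for matrix potentials that are merely in $L_p$, so I expect the transformation-operator route above, which the rest of the paper already sets up, to be the more economical one.
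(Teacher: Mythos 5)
Your proposal is correct and follows essentially the same route as the paper: surjectivity is definitional, and injectivity comes from the Parseval identity, which expresses the operator $\mathscr I+\mathscr F_H=\slim_{N\to\infty}\sum_{|j|\le N}\Phi_0(\lambda_j)\alpha_j\Phi_0^*(\lambda_j)$ (depending only on the spectral data) in the factorized form $(\mathscr I+\mathscr K_{q_i})^{-1}(\mathscr I+\mathscr K_{q_i}^{\,*})^{-1}$ for $i=1,2$, whence $\mathscr K_{q_1}=\mathscr K_{q_2}$ and then $q_1=q_2$ from the transformation kernel. The one place you overshoot is the diagnosed ``hard part'': for injectivity one need not \emph{solve} the Gelfand--Levitan--Marchenko equation nor invoke $(B_1)$--$(B_3)$ and the Kadec-type theorem, since both factorizations already exist (they are supplied by $K_{q_1}$ and $K_{q_2}$) and the paper only uses the soft fact (Theorem~\ref{FactTh1}) that a factorization in $\mathscr G_p(M_{2r})$ is unique; the Riesz-basis machinery is needed only for the existence direction (Theorem~\ref{Th4}), which here is free by the definition of $\mathfrak A_p$.
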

This allows a possibility to reconstruct the operator from the spectral data.

As in \cite{sturm,dirac1}, we base our reconstruction algorithm on Krein's accelerant method:

\begin{definition}\label{Def2}
We say that a function $H\in L_1((-1,1),M_r)$ is an accelerant if $H(-x)=H(x)^*$ a.e. on $(-1,1)$ and for every $a\in(0,1]$ the integral equation
$$
f(x)+\int\limits_0^a H(x-t)f(t)\ \d t=0,\qquad x\in(0,a),
$$
has only zero solution in $L_2((0,a),\mathbb C^r)$.
We denote by $\mathfrak H_p$, $p\ge1$, the set of all accelerants belonging to $L_p((-1,1),M_r)$ and endow $\mathfrak H_p$ with the metric of the space $L_p((-1,1),M_r)$.
\end{definition}
Equivalently, it is known (see, e.g., \cite{MykDirac}) that an arbitrary function $H\in L_p((-1,1),M_r)$, $p\ge1$, belongs to $\mathfrak H_p$ if and only if the Krein equation
\begin{equation}\label{KreinEq}
R(x,t)+H(x-t)+\int\limits_0^x R(x,s)H(x-s)\ \d s=0,\qquad (x,t)\in\Omega,
\end{equation}
where $\Omega:=\{(x,t)\ | \ 0\le t\le x\le1\}$, is solvable in $G_p^+(M_r)$ (see Appendix~\ref{AppSpaces}). In this case, a solution of (\ref{KreinEq}) is unique and we denote it by $R_H(x,t)$, $(x,t)\in\Omega$.

We define the {\it Krein mapping} $\Theta:\mathfrak H_1\to\mathfrak Q_1$ by the formula
\begin{equation}\label{KreinMap}
[\Theta(H)](x):=\i R_H(x,0),\qquad x\in(0,1).
\end{equation}
This mapping provides a one-to-one correspondence between accelerants $H\in\mathfrak H_p$ and potentials $q\in\mathfrak Q_p$:
\begin{theorem}\label{Th3}
For all $p\ge1$, the Krein mapping $\Theta$ is a homeomorphism between the metric spaces $\mathfrak H_p$ and $\mathfrak Q_p$.
\end{theorem}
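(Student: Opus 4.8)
The plan is to handle the forward map $\Theta$ through the factorization of integral operators, to construct its inverse through the transformation operator of the differential expression $\mathfrak t_q$, and to obtain bicontinuity from continuous dependence in the factorization. First I would check that $\Theta(\mathfrak H_p)\subset\mathfrak Q_p$: for $H\in\mathfrak H_p$ the Krein equation (\ref{KreinEq}) is uniquely solvable with $R_H\in G_p^+(M_r)$, as recalled after Definition~\ref{Def2}, and by the definition of $G_p^+(M_r)$ in Appendix~\ref{AppSpaces} the boundary trace $x\mapsto R_H(x,0)$ belongs to $L_p((0,1),M_r)$, so that $\Theta(H)=\i R_H(\cdot,0)\in\mathfrak Q_p$. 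Continuity of $\Theta$ I would derive from Appendix~\ref{AppFact}: writing $\Gamma_H$ for the integral operator on $L_2((0,1),\mathbb C^r)$ with kernel $H(x-t)$, equation (\ref{KreinEq}) expresses the triangular factorization of $I+\Gamma_H$ through the Volterra factor with kernel $R_H$, and the factorization theory shows that $H\mapsto R_H$ is continuous from $\mathfrak H_p$ into $G_p^+(M_r)$; composing with the continuous trace map $G_p^+(M_r)\ni R\mapsto\i R(\cdot,0)$ then yields continuity of $\Theta$.

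To invert $\Theta$ I would begin from a potential $q\in\mathfrak Q_p$ and pass to the transformation operator for $\mathfrak t_q$, which furnishes a kernel $R\in G_p^+(M_r)$ with $\i R(\cdot,0)=q$; equivalently, $R$ is the unique solution in $G_p^+(M_r)$ of the Goursat-type system satisfied by transformation-operator kernels, with boundary data prescribed by $q$. Reading (\ref{KreinEq}) backwards, i.e. recovering the convolution kernel $H$ from the triangular factor $R$ by solving a Volterra equation in $H$, produces a candidate $H\in L_p((-1,1),M_r)$, and I would verify the two defining properties of $\mathfrak H_p$: the symmetry $H(-x)=H(x)^*$ and the positivity (equivalently, unique-solvability) condition of Definition~\ref{Def2}, both inherited from the positivity of the factorization of $I+\Gamma_H$. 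By uniqueness of the solution of (\ref{KreinEq}) this $R$ coincides with $R_H$, whence $\Theta(H)=q$ and $\Theta$ is onto $\mathfrak Q_p$. Injectivity then follows because $R_H$ is determined by $H$ through uniqueness of the factorization while $H$ is in turn determined by $R_H$; thus $\i R_{H_1}(\cdot,0)=\i R_{H_2}(\cdot,0)$ forces $R_{H_1}=R_{H_2}$ via the Goursat structure, and hence $H_1=H_2$. Continuity of $\Theta^{-1}$ I would again take from Appendix~\ref{AppFact}, applied in the reverse direction, together with the continuous dependence of the transformation-operator kernel on $q$. The whole scheme parallels the case $p=2$ of \cite{dirac1} and the Sturm--Liouville treatment of \cite{sturm}.

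The hard part will be carrying the two continuity statements through for \emph{all} $p\ge1$ rather than only for $p=2$. For $p=2$ the operator $\Gamma_H$ is Hilbert--Schmidt and the factorization lives naturally in Hilbert space, which is precisely the route taken in \cite{dirac1}; for general $p$ one must work directly in the Banach spaces $G_p^+(M_r)$ and control the Volterra kernels $R_H$ and the resolvents $(I+\Gamma_H)^{-1}$ under $L_p$-perturbations of $H$. Concretely, I expect the decisive input to be an estimate of the form $\|R_{H_1}-R_{H_2}\|_{G_p^+(M_r)}\le C\,\|H_1-H_2\|_{L_p}$, uniform over $L_p$-bounded sets of accelerants, with a companion bound in the reverse direction; once Appendix~\ref{AppFact} supplies these, the remaining arguments reduce to routine estimates for Volterra-type integral operators.
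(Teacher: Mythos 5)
Your treatment of the forward direction is fine and coincides with the paper's: well-definedness of $\Theta$ follows from $R_H\in G_p^+(M_r)$, and continuity of $\Theta$ is exactly Remark~\ref{ThetaContRem}, resting on the continuity of $\mathfrak H_p\owns H\mapsto R_H\in G_p^+(M_r)$ recorded in Appendix~\ref{AppFact}. The gaps are in the inverse direction, where your argument rests on two claims that are not available in the paper's toolkit and are in fact the substance of what must be proven. First, you posit that every $q\in\mathfrak Q_p$ determines a kernel $R\in G_p^+(M_r)$ as ``the unique solution of the Goursat-type system satisfied by transformation-operator kernels, with boundary data prescribed by $q$,'' and that solving (\ref{KreinEq}) backwards then yields an accelerant $H$ with $R_H=R$. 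But for the Dirac expression $\mathfrak t_q$ the transformation-operator kernel is the $2r\times 2r$ kernel $K_q\in G_p^+(M_{2r})$ of Lemma~\ref{SCLemma}, not an $r\times r$ kernel with trace $q$; the Krein kernel $R_H$ is tied to the triangular factorization of the convolution operator, and no Goursat characterization of it in terms of $q$ alone is established anywhere in the paper (for $L_p$ matrix potentials this is essentially the content of \cite{5auth}, not a routine step). Moreover, given an arbitrary $R\in G_p^+(M_r)$ with $\i R(\cdot,0)=q$, the $t=0$ slice of (\ref{KreinEq}) is indeed a Volterra equation producing some $H$, but nothing forces this $H$ to satisfy (\ref{KreinEq}) for \emph{all} $0\le t\le x\le1$ (the displacement-structure compatibility), nor to be an accelerant: your appeal to positivity ``inherited from the factorization of $I+\Gamma_H$'' is circular, since $H$ is the object being constructed. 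Both your surjectivity and your injectivity argument (from $R_{H_1}(\cdot,0)=R_{H_2}(\cdot,0)$ to $R_{H_1}=R_{H_2}$) hinge on this unproven claim. Second, Appendix~\ref{AppFact} does not supply any ``companion bound in the reverse direction'': Theorem~\ref{FactTh1} and the remark after Lemma~\ref{FactLemma1} give only continuity of $H\mapsto R_H$, i.e.\ the easy direction, and no Lipschitz-type estimates at all.

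The paper avoids constructing $\Theta^{-1}$ from scratch. It restricts $\Theta$ to the dense class $\mathfrak H_0$ of accelerants continuous on $[-1,1]\setminus\{0\}$ with a jump at the origin, quotes \cite{5auth} for the bijectivity of $\Theta_0:=\Theta|_{\mathfrak H_0}$ onto $C([0,1],M_r)$, and then exhibits an explicit continuous map $\Upsilon:\mathfrak Q_p\to L_p((-1,1),M_r)$ extending ${\Theta_0}^{-1}$: form $\mathscr F^q:=(\mathscr I+\mathscr K_q)^{-1}(\mathscr I+{\mathscr K_q}^*)^{-1}-\mathscr I$, whose kernel depends continuously on $q$ by Lemmas~\ref{SCLemma} and \ref{KinvLemma}, and apply the block boundary-trace map $\eta$ of (\ref{AFdef}), which inverts $H\mapsto F_H$ thanks to the explicit form (\ref{FH}); the identification $\Upsilon={\Theta_0}^{-1}$ on continuous potentials comes from \cite{dirac1} via Proposition~\ref{DirectFactTh}, and density finishes the proof. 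If you want to repair your outline, this is the missing mechanism: the passage from $q$ back to $H$ must go through $K_q$ and the factorization identity $F^q=F_H$, established on a dense class and extended by continuity, rather than through a hypothetical Goursat system for the $r\times r$ Krein kernel.
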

We use the Krein mapping to reconstruct the potential $q$ from the spectral data of the operator $T_q$:

\begin{theorem}\label{Th4}
Given an arbitrary sequence $\mathfrak a\in\mathfrak A_p$, set $\mu:=\mu^{\mathfrak a}$ and $H:=H_\mu$. Then $H\in\mathfrak H_p$ and $\mathfrak a=\mathfrak a_q$ for $q=\Theta(H)$.
\end{theorem}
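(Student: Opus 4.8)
The plan is to reduce the theorem to a single identity furnished by the direct spectral problem: that the distribution $H_\mu$ assembled from the spectral measure is precisely the accelerant whose Krein mapping is the underlying potential. Since $\mathfrak a\in\mathfrak A_p$, the very definition $\mathfrak A_p=\{\mathfrak a_q\mid q\in\mathfrak Q_p\}$ supplies a potential $\widetilde q\in\mathfrak Q_p$ with $\mathfrak a_{\widetilde q}=\mathfrak a$, so that $\mu=\mu^{\mathfrak a}=\mu_{\widetilde q}$. By Theorem~\ref{Th3} the Krein mapping is a homeomorphism, hence a bijection, so there is a unique accelerant $\widetilde H\in\mathfrak H_p$ with $\Theta(\widetilde H)=\widetilde q$. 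I would then establish that
$$
H_\mu=\widetilde H .
$$
Granting this, $H=H_\mu=\widetilde H\in\mathfrak H_p$ and $\Theta(H)=\Theta(\widetilde H)=\widetilde q$, whence $q=\Theta(H)=\widetilde q$ and $\mathfrak a_q=\mathfrak a_{\widetilde q}=\mathfrak a$, which is the assertion of the theorem. On this reading Theorem~\ref{Th4} is the statement that the reconstruction algorithm, applied to genuine spectral data, returns the correct potential.

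To prove the displayed identity I would express the Fourier transform of the spectral measure through the transformation operator of $T_{\widetilde q}$. With $\widetilde H=\Theta^{-1}(\widetilde q)$ and $R:=R_{\widetilde H}$ the solution of the Krein equation~\eqref{KreinEq}, the Volterra operator $I+\mathcal R$ with kernel $R$ intertwines the free equation with $\mathfrak t_{\widetilde q}$ and preserves the boundary condition at the origin, so that the eigenfunctions of $T_{\widetilde q}$ are the $(I+\mathcal R)$-images of the free solutions $\e^{\i\lambda_j x}d$. Inserting this representation into the Parseval equality attached to the spectral measure $\mu_{\widetilde q}$ and comparing it with the free Parseval equality for $\mu_0$, one is led to an identity relating $R$ to the difference $\widehat{\mu_{\widetilde q}}-\widehat{\mu_0}$ of the Fourier transforms~\eqref{Ftransform}; restricting that identity to $[-1,1]$ and using Definition~\ref{Def1} rewrites it as the Krein equation~\eqref{KreinEq} with $H_\mu$ in place of $H$. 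Thus the Krein equation for $H_\mu$ is solvable, with solution $R$, so $H_\mu\in\mathfrak H_p$ — its $L_p$-membership being exactly condition~$(B_4)$ of Theorem~\ref{Th1} — and $R_{H_\mu}=R$. Consequently $\Theta(H_\mu)=\i R(\cdot,0)=\widetilde q=\Theta(\widetilde H)$, and injectivity of $\Theta$ (Theorem~\ref{Th3}) yields $H_\mu=\widetilde H$.

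I expect the whole weight of the argument to fall on the intertwining-plus-Parseval step of the preceding paragraph, i.e. on the proof that the restricted Fourier transform of the spectral measure really is the Gelfand--Levitan/Krein accelerant of the potential. In the square-integrable case~\cite{dirac1} this is read off from the precise asymptotics of the eigenvalues $\lambda_j$ and the norming matrices $\alpha_j$; as stressed in the introduction, such asymptotics are unavailable for $p\ge1$, so the delicate points will be to make sense of $\widehat\mu-\widehat\mu_0$ as a tempered distribution and to control the convergence of the series~\eqref{Hseries} using only the information $(B_1)$--$(B_4)$. Here the completeness hypothesis $(B_3)$ together with the vector Kadec-type estimates of Appendix~\ref{AppRiesz} should do the work that the asymptotics did for $p=2$: they guarantee that the system $\{\e^{\i\lambda_j t}d\mid d\in\Ran\alpha_j\}$ is a Riesz basis of $L_2((-1,1),\mathbb C^r)$, so that the generalized Fourier transform associated with $\mu_{\widetilde q}$ is bounded and boundedly invertible, which is exactly what legitimises the Parseval comparison and the passage to the Krein equation. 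Once this analytic groundwork is secured, the algebraic reduction to $H_\mu=\widetilde H$ and the final conclusion are routine.
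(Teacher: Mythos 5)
Your opening reduction is sound: since $\mathfrak a\in\mathfrak A_p$ there is a $\widetilde q\in\mathfrak Q_p$ with $\mathfrak a_{\widetilde q}=\mathfrak a$, hence $\mu=\mu_{\widetilde q}$, and the whole theorem collapses to the single identity $H_{\mu_{\widetilde q}}=\Theta^{-1}(\widetilde q)$. But that identity is not a step you may defer as "routine once the Riesz-basis groundwork is secured": it is exactly the paper's Theorem~\ref{B3Th}, i.e.\ the entire analytic content of the direct problem for $p\ge1$, and the mechanism you propose for it would not deliver it. The vector Kadec theorem (Theorem~\ref{KadecTh}) and the Riesz-basis property of $\{\e^{\i\lambda_jt}d\}$ produce $L_2$-type bounds; in the paper they are used for the rank count $(B_2)$ and for the pairwise orthogonality of the projectors $\widetilde P_{\mathfrak a,j}$, and they say nothing about the $L_p$-regularity of $\widehat\mu-\widehat\mu_0$ or the convergence of the series (\ref{Hseries}) --- precisely the difficulty the remark after Theorem~\ref{Th1} singles out as inaccessible without sharp asymptotics. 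The paper proves Theorem~\ref{B3Th} by a completely different device: approximate $\widetilde q$ by continuous potentials $q_n$, for which $H_{\mu_{q_n}}=\Theta^{-1}(q_n)$ is known from \cite{dirac1,5auth}; establish the weak convergence $(H_{\mu_{q_n}},f)\to(H_{\mu_{\widetilde q}},f)$ via contour-integral estimates on the Weyl functions (Lemmas~\ref{MLimLemma} and \ref{WeakConvProp}); and invoke the continuity of $\Theta^{-1}$ (Theorem~\ref{Th3}). Your Parseval/intertwining sketch instead presupposes what must be shown (that the restricted Fourier transform of $\mu_{\widetilde q}$ is an honest kernel satisfying the Krein equation), so the central step of your proposal is a genuine gap. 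A smaller slip in the same place: you let the Krein resolvent kernel $R_{\widetilde H}$ act as the transformation operator, but the transformation kernel is $K_{\widetilde q}$; the two are distinct objects tied together only through the factorization identity of Proposition~\ref{DirectFactTh}.

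Two further points. First, your reduction \emph{can} be closed rigorously with the paper's Section~3 material: either cite Theorem~\ref{B3Th} outright, or use $(B_4)$ (legitimate, since necessity is proved in Section~3) together with Lemma~\ref{FHeqLemma}, the Parseval identity for $T_{\widetilde q}$, uniqueness of factorization, and Proposition~\ref{DirectFactTh}; this yields a correct and very short proof of the literal statement, genuinely different from the paper's. Second, note why the paper does not argue this way: its proof of Theorem~\ref{Th4} never uses the existence of $\widetilde q$ --- it constructs $q=\Theta(H)$ for an \emph{arbitrary} sequence satisfying $(B_1)$--$(B_4)$ and shows $\mathfrak a=\mathfrak a_q$ via the system $\{\widetilde P_{\mathfrak a,j}\}$ of complete, pairwise orthogonal projectors --- because that stronger form is what the sufficiency half of Theorem~\ref{Th1} cites. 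Any argument that presupposes $\mathfrak a\in\mathfrak A_p$, like yours, would leave the characterization of the spectral data unproved.
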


Therefore, Theorems \ref{Th2} and \ref{Th4} provide an efficient method for reconstructing the Dirac operator $T_q$ from the spectral data. Namely, given an arbitrary sequence $\mathfrak a\in\mathfrak A_p$ being the spectral data of some Dirac operator $T_q$, construct the matrix-valued measure $\mu:=\mu^{\mathfrak a}$ by the formula (\ref{MuAdef}); set $H:=H_\mu$ by the formula (\ref{AccFirstDef}) or (\ref{Hseries}); substitute $H$ into the Krein equation (\ref{KreinEq}) and find $R_H$; find the potential $q=\Theta(H)$ by the formula (\ref{KreinMap}). That the potential $q$ is the one looked for follows from
the fact that the Dirac operator $T_q$ has the spectral data $\mathfrak a$ we have started with.

The method can be visualized by means of the following diagram:
$$
\mathfrak A_p
\owns
\mathfrak a
\overset{(1.5)}{\underset{s_1}\longrightarrow}
\mu:=\mu^{\mathfrak a}
\overset{(1.7)}{\underset{s_2}\longrightarrow}
H:=H_\mu
\overset{(1.10)}{\underset{s_3}\longrightarrow}
q=\Theta(H)
\in
\mathfrak Q_p.
$$
Here $s_j$ denotes the step number $j$. The steps $s_1$ and $s_2$ are trivial. The basic and non-trivial step is $s_3$, which requires solving the Krein equation (\ref{KreinEq}).

\begin{remark}
One can similarly consider the Dirac operators with general separated boundary conditions. However, the  description of the spectral data would be more complicated since the spectrum of the free operator $T_0$ (subject to $q=0$) is more involved in this case. The author plans to consider the operators with general (not necessarily separated) boundary conditions in a forthcoming paper.
\end{remark}

\section{Preliminary results}

Here we introduce the Weyl--Titchmarsh function and establish the basic properties of the operator $T_q$. The material of this section mainly follows \cite{dirac1} but forms the essential base for further considerations.

\subsection{The Weyl--Titchmarsh function of the operator $T_q$}

We start from constructing the Weyl--Titchmarsh function of the operator $T_q$.

Let $q\in\mathfrak Q_p$, $p\ge1$. Set
$$
a:=\frac{1}{\sqrt{2}}\begin{pmatrix}I,&-I\end{pmatrix}
$$
so that the boundary conditions $y_1(0)=y_2(0)$, $y_1(1)=y_2(1)$ can be written in the form
$$
ay(0)=ay(1)=0.
$$
The multiplier $\frac1{\sqrt2}$ provides the normalization $aa^*=I$.

Denote by $u_q(\cdot,\lambda)\in W_1^1((0,1),M_{2r})$, $\lambda\in\mathbb C$, a matrix-valued solution of the Cauchy problem
$$
\vartheta\frac{\d}{\d x}u+\bq u=\lambda u,\qquad u(0,\lambda)=I_{2r},
$$
and set
\begin{equation}\label{varphipsiDef}
\varphi_q(x,\lambda):=u_q(x,\lambda)\vartheta a^*,\qquad \psi_q(x,\lambda):=u_q(x,\lambda)a^*
\end{equation}
so that the $2r\times r$ matrix-valued functions $\varphi_q(x,\lambda)$ and $\psi_q(x,\lambda)$ solve the Cauchy problems
\begin{equation}\label{varphiCauchyPr}
\vartheta\frac{\d}{\d x}\varphi+\bq\varphi=\lambda\varphi,\qquad \varphi(0,\lambda)=\vartheta a^*,
\end{equation}
and
$$
\vartheta\frac{\d}{\d x}\psi+\bq\psi=\lambda\psi,\qquad \psi(0,\lambda)=a^*,
$$
respectively.

Next, define the $r\times r$ matrix-valued functions $s_q(\lambda)$ and $c_q(\lambda)$ by the formulae
\begin{equation}\label{scDef}
s_q(\lambda):=a\varphi_q(1,\lambda),\qquad c_q(\lambda):=a\psi_q(1,\lambda).
\end{equation}
Then the function
\begin{equation}\label{WeylFdef}
m_q(\lambda):=-s_q(\lambda)^{-1}c_q(\lambda)
\end{equation}
is called the {\it Weyl--Titchmarsh function} of the operator $T_q$.

Repeating the proofs in \cite{dirac1}, which were done for the case of square-integrable potential, one can prove the following lemma claiming the basic properties of just introduced objects:

\begin{lemma}\label{SCLemma}
\begin{itemize}
\item[(i)]For every $q\in\mathfrak Q_p$, $p\ge1$, there exists a unique matrix-valued function $K_q\in G_p^+(M_{2r})$ (see Appendix~\ref{AppSpaces}) such that for all $\lambda\in\mathbb C$ and $x\in(0,1)$,
\begin{equation}\label{VarphiTrasfOp}
\varphi_q(x,\lambda)=\varphi_0(x,\lambda)+\int\limits_0^x K_q(x,s)\varphi_0(s,\lambda)\ \d s,
\end{equation}
where
$$
\varphi_0(x,\lambda)=\frac1{\sqrt{2} \i}\begin{pmatrix}\e^{\i\lambda x}I\\
\e^{-\i\lambda x}I\end{pmatrix}
$$
is a solution of (\ref{varphiCauchyPr}) in the free case $\bq=0$; moreover, the mapping $\mathfrak Q_p\owns q\mapsto K_q\in G_p^+(M_{2r})$ is continuous;

\item[(ii)]the functions $s_q(\lambda)$ and $c_q(\lambda)$ are entire and allow the representations
\begin{equation}\label{Srepr}
s_q(\lambda)=(\sin\lambda)I+\int\limits_{-1}^1 \e^{\i\lambda t}g_q(t)\ \d t,\quad
\end{equation}
\begin{equation}\label{Crepr}
c_q(\lambda)=(\cos\lambda)I+\int\limits_{-1}^1 \e^{\i\lambda t}h_q(t)\ \d t,
\end{equation}
where $g_q$ and $h_q$ are some (depending on $q$) functions in $L_p((-1,1),M_r)$; moreover, the mappings $\mathfrak Q_p\owns q\mapsto g_q\in L_p((-1,1),M_r)$ and $\mathfrak Q_p\owns q\mapsto h_q\in L_p((-1,1),M_r)$ are continuous;

\item[(iii)]the operator functions $\lambda\mapsto s_q(\lambda)^{-1}$ and $\lambda\mapsto m_q(\lambda)$ are meromorphic in $\mathbb C$; moreover, $m_0(\lambda)=-(\cot\lambda)I$ and
\begin{equation}\label{mAsymp}
\|m_q(\lambda)+(\cot\lambda)I\|=\o(1)
\end{equation}
as $\lambda\to\infty$ within the domain $\mathcal O=\{z\in\mathbb C \ | \ \forall n\in\mathbb Z \ |z-\pi n|>1\}$.
\end{itemize}
\end{lemma}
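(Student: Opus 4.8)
The substance of the lemma is part~(i): once the transformation operator is available, (ii) and (iii) follow by direct computation and elementary estimates, exactly as in \cite{dirac1} for $p=2$. My plan for (i) is to build $K_q$ as the kernel of the transformation operator for the \emph{fundamental} matrix $u_q$, from which the representation $(\ref{VarphiTrasfOp})$ for $\varphi_q$ and the analogous one for $\psi_q$ both follow. First I would replace the Cauchy problem for $u_q$ by the equivalent Volterra integral equation $u_q(x,\lambda)=E(x,\lambda)-\int_0^x E(x-s,\lambda)\vartheta^{-1}\bq(s)u_q(s,\lambda)\,\d s$, where $E(x,\lambda)=\mathrm{diag}(\e^{\i\lambda x}I,\e^{-\i\lambda x}I)$ is the free propagator, and solve it by successive approximations. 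Since $\vartheta^{-1}\bq$ is off-diagonal, each iterate is a superposition of exponentials $\e^{\pm\i\lambda s}$ with $s$ running over a subinterval of $(0,x)$; grouping these and recognising the sum as a Volterra operator acting on $E(\cdot,\lambda)$ converts the Neumann series into $u_q(x,\lambda)=E(x,\lambda)+\int_0^x\widetilde K_q(x,s)E(s,\lambda)\,\d s$. Multiplying on the right by $\vartheta a^*$ and by $a^*$ and using $\varphi_0=E\vartheta a^*$, $\psi_0=Ea^*$ then yields $(\ref{VarphiTrasfOp})$ and the corresponding formula for $\psi_q$ with the \emph{same} kernel $K_q:=\widetilde K_q$; uniqueness of $K_q$ follows from injectivity of the Fourier transform.

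The main obstacle is precisely to show that $K_q$ lies in $G_p^+(M_{2r})$ and depends continuously on $q$, since for $p\ne2$ the Hilbert-space arguments of \cite{dirac1} are unavailable. I would instead propagate $L_p$ bounds directly through the iteration: the first iterate is, up to bounded factors, a rearrangement of $\bq$ and so lies in the required space, and each further iterate arises from its predecessor by integration against the bounded matrix $\vartheta^{-1}\bq$, so that H\"older's and Young's inequalities control its $G_p^+$-norm and render the series of iterates summable in $G_p^+(M_{2r})$. Applying the same estimates to the difference of two potentials bounds $\|K_{q_1}-K_{q_2}\|$ by a constant multiple of $\|q_1-q_2\|_{\mathfrak Q_p}$, which gives the asserted continuity; the dependence is in fact analytic.

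For (ii) I would substitute $(\ref{VarphiTrasfOp})$ into $s_q(\lambda)=a\varphi_q(1,\lambda)$. The free term gives $a\varphi_0(1,\lambda)=(\sin\lambda)I$, while the integral term, after inserting $\varphi_0(s,\lambda)=\tfrac1{\sqrt2\,\i}(\e^{\i\lambda s}I,\e^{-\i\lambda s}I)^\top$, is a sum of integrals $\int_0^1(\cdots)\e^{\pm\i\lambda s}\,\d s$; folding the two exponentials onto $(-1,1)$ by the substitution $t=\pm s$ rewrites it as $\int_{-1}^1\e^{\i\lambda t}g_q(t)\,\d t$, where $g_q$ is assembled from the blocks of $K_q(1,\cdot)$ and lies in $L_p((-1,1),M_r)$ because the section $K_q(1,\cdot)$ belongs to $L_p(0,1)$. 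This is $(\ref{Srepr})$; the proof of $(\ref{Crepr})$ for $c_q=a\psi_q(1,\lambda)$ is identical, the free term now being $(\cos\lambda)I$. Continuity of $q\mapsto g_q$ and $q\mapsto h_q$ is inherited from that of $q\mapsto K_q$.

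Finally, the key analytic input for (iii) is the Riemann--Lebesgue-type bound $\int_{-1}^1\e^{\i\lambda t}g(t)\,\d t=\o(\e^{|\Im\lambda|})$ as $|\lambda|\to\infty$, valid for every $g\in L_1$ (approximate $g$ in $L_1$ by smooth compactly supported functions and integrate by parts). Applied to $g_q$ it gives $(\sin\lambda)^{-1}s_q(\lambda)\to I$ throughout $\mathcal O$, once we note the elementary lower bound $|\sin\lambda|\ge\delta\,\e^{|\Im\lambda|}$ valid on $\mathcal O$; in particular $\det s_q\not\equiv0$, so $s_q^{-1}$ and hence $m_q=-s_q^{-1}c_q$ are meromorphic. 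The free case $g_0=h_0=0$ gives $s_0=(\sin\lambda)I$, $c_0=(\cos\lambda)I$ and $m_0=-(\cot\lambda)I$ at once. Writing $s_q^{-1}=(\sin\lambda)^{-1}(I+\o(1))$ and using the same bound for $c_q$ together with the boundedness of $|\cot\lambda|$ on $\mathcal O$ yields $\|m_q(\lambda)+(\cot\lambda)I\|=\o(1)$, which is $(\ref{mAsymp})$. Thus the only genuinely new step compared with \cite{dirac1} is the $L_p$ (in particular $p=1$) kernel estimate of the second paragraph; the rest transfers with only notational changes.
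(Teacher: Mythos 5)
Your treatment of parts (ii) and (iii) is sound and coincides with the paper's: fold the exponentials onto $(-1,1)$ to get (\ref{Srepr})--(\ref{Crepr}), then combine the refined Riemann--Lebesgue bound with the estimate $|\sin\lambda|\ge\delta\,\e^{|\Im\lambda|}$ on $\mathcal O$ to obtain meromorphy and (\ref{mAsymp}). The genuine gap is in part (i), namely in the claimed intermediate representation
\begin{equation*}
u_q(x,\lambda)=E(x,\lambda)+\int_0^x\widetilde K_q(x,s)E(s,\lambda)\,\d s .
\end{equation*}
This is false for the fundamental matrix. Already the first Neumann iterate, $-\int_0^x E(x-s,\lambda)\vartheta^{-1}\bq(s)E(s,\lambda)\,\d s$, has $(1,2)$ block equal to a nonzero multiple of $\int_0^x q(s)\,\e^{\i\lambda(x-2s)}\,\d s$: the exponents $x-2s$ sweep the \emph{whole} interval $(-x,x)$, whereas in your ansatz the $(1,2)$ block can contain only the exponentials $\e^{-\i\lambda s}$ with $s\in(0,x)$. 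A scaling argument ($q\mapsto\varepsilon q$, equating orders in $\varepsilon$) shows that no cancellation between iterates can repair this. Consequently the kernel your $L_p$ iteration estimates are supposed to bound does not exist, and the second paragraph of your proposal has nothing to act on.

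The correct statement --- the one the paper imports from \cite{cauchy} --- involves \emph{two} kernels: $u_q(x,\lambda)=\e^{-\lambda x\vartheta}+\int_0^x P^+(x,s)\e^{-\lambda(x-2s)\vartheta}\,\d s+\int_0^x P^-(x,s)\e^{\lambda(x-2s)\vartheta}\,\d s$, equivalently a single kernel integrated over the symmetric interval $(-x,x)$. The representation (\ref{VarphiTrasfOp}), with integration over $(0,x)$ only, then follows because the particular column $\varphi_0$ enjoys the symmetry $\varphi_0(-s,\lambda)=J\varphi_0(s,\lambda)$, where $J$ is the operator interchanging the two $\mathbb C^r$ components; this lets one fold the negative arguments back onto $(0,x)$ and yields $K_q=P^++P^-J$ (after a change of variables). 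This folding step is absent from your part (i). For the same reason your claim that $\psi_q$ admits a representation ``with the same kernel $K_q$'' is wrong: since $\psi_0(-s,\lambda)=-J\psi_0(s,\lambda)$, the folded kernel for $\psi_q$ is $P^+-P^-J$, not $K_q$; this is harmless for part (ii), where any $G_p^+$ kernel for $\psi_q$ produces (\ref{Crepr}), but it is an error nonetheless. Two further small points: $\vartheta^{-1}\bq$ is \emph{not} a bounded matrix function for $q\in\mathfrak Q_p$ --- what makes the iteration converge is Young's inequality ($L_1$ against $L_p$) together with the $1/n!$ gain from the ordered simplex --- and this self-contained iteration, once run on the correct two-kernel object, is precisely the content of \cite{cauchy} to which the paper delegates part (i). With these corrections your argument closes and is essentially the paper's proof.
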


\begin{proofsketch}
Repeating the proof in \cite{dirac1} and using the results of \cite{cauchy}, one can show that there exist unique matrix-valued functions $P^+,P^-\in G_p^+(M_{2r})$ such that for all $x\in(0,1)$ and $\lambda\in\mathbb C$,
$$
u_q(x,\lambda)=\e^{-\lambda x\vartheta}+\int_0^x P^+(x,s)\e^{-\lambda(x-2s)\vartheta}\ \d s+
\int_0^x P^-(x,s)\e^{\lambda(x-2s)\vartheta}\ \d s.
$$
Then, by virtue of definitions (\ref{varphipsiDef}) and (\ref{scDef}), straightforward calculations lead us to the representations (\ref{VarphiTrasfOp}), (\ref{Srepr}) and (\ref{Crepr}). Moreover, since the mappings $\mathfrak Q_p\owns q\mapsto P^\pm\in G_p^+(M_{2r})$ are continuous (see \cite{cauchy}), the mappings $\mathfrak Q_p\owns q\mapsto K_q\in G_p^+(M_{2r})$ and $\mathfrak Q_p\owns q\mapsto g_q,h_q\in L_p((-1,1),M_r)$ remain continuous. Thus we obtain parts (i) and (ii) of the present lemma.

To prove part (iii), observe that by virtue of the representations (\ref{Srepr}), (\ref{Crepr}) and Lemma~\ref{RefRiemannLebesgue} we have
$$
\lim\limits_{|\lambda|\to\infty} \e^{-|\Im\lambda|}\|s_q(\lambda)-(\sin\lambda)I\|=
\lim\limits_{|\lambda|\to\infty} \e^{-|\Im\lambda|}\|c_q(\lambda)-(\cos\lambda)I\|=0.
$$
Therefore, $s_q(\lambda)$ is invertible for all $\lambda\in\mathcal O$ large enough, so that $m_q$ is meromorphic and (\ref{mAsymp}) holds true.
\end{proofsketch}

It will follow that poles of the Weyl--Titchmarsh function $m_q$ are eigenvalues of the Dirac operator $T_q$. Given also the corresponding residues of $m_q$, it is possible to find the potential $q$.

\subsection{Basic properties of the operator $T_q$}

Here we establish the basic properties of the operator $T_q$.
We set
$$
\mathbb H:=L_2((0,1),\mathbb C^r)\times L_2((0,1),\mathbb C^r)
$$
and denote by $\mathscr I$ the identity operator in $\mathbb H$.
For an arbitrary $q\in\mathfrak Q_p$, $p\ge1$, and $\lambda\in\mathbb C$, denote by $\Phi_q(\lambda)$ the operator acting from $\mathbb C^r$ to $\mathbb H$ by the formula
$$
[\Phi_q(\lambda)c](x):=\varphi_q(x,\lambda)c.
$$
Taking into account (\ref{VarphiTrasfOp}), we obtain that for all $\lambda\in\mathbb C$,
\begin{equation}\label{PhiTransfOp}
\Phi_q(\lambda)=(\mathscr I+\mathscr K_q)\Phi_0(\lambda),
\end{equation}
where $\mathscr K_q$ is an integral operator in $\mathbb H$ with kernel $K_q$ (see Lemma~\ref{SCLemma}, part~(i)) and
\begin{equation}\label{Phi0Def}
[\Phi_0(\lambda)c](x)=\frac1{\sqrt{2} \i}\begin{pmatrix}\e^{\i\lambda x}I\\
\e^{-\i\lambda x}I\end{pmatrix}c.
\end{equation}

The following lemma claims basic properties of the operators $\Phi_q(\lambda)$. Particularly, the second part of the lemma is important:

\begin{lemma}\label{kerTLemma}
For all $q\in\mathfrak Q_p$, $p\ge1$, and $\lambda\in\mathbb C$,
\begin{itemize}
\item[(i)]
\begin{equation}\label{kerPhi}
\ker\Phi_q(\lambda)=\{0\},\qquad \Ran\Phi_q^*(\lambda)=\mathbb C^r,
\end{equation}
where $\Phi_q^*(\lambda):=[\Phi_q(\lambda)]^*$;
\item[(ii)]
\begin{equation}\label{kerT}
\ker(T_q-\lambda\mathscr I)=\Phi_q(\lambda)\ker s_q(\lambda).
\end{equation}
\end{itemize}
\end{lemma}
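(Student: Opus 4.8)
The plan is to treat the two parts separately, deriving (i) from the transformation formula (\ref{PhiTransfOp}) and (ii) from a direct analysis of the eigenvalue equation together with the two boundary conditions.

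For part (i), I would first observe that $\Phi_0(\lambda)$ is injective: if $\Phi_0(\lambda)c=0$, then by (\ref{Phi0Def}) the first component $\frac{1}{\sqrt{2}\i}\e^{\i\lambda x}c$ vanishes a.e.\ on $(0,1)$, and since $\e^{\i\lambda x}\neq0$ this forces $c=0$. The operator $\mathscr I+\mathscr K_q$ is boundedly invertible because $\mathscr K_q$ is a Volterra-type integral operator with kernel $K_q\in G_p^+(M_{2r})$ (Lemma~\ref{SCLemma}(i)); hence by (\ref{PhiTransfOp}) the composition $\Phi_q(\lambda)=(\mathscr I+\mathscr K_q)\Phi_0(\lambda)$ is injective as well, which is the first assertion in (\ref{kerPhi}). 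The second assertion then follows from the general identity $\overline{\Ran\Phi_q^*(\lambda)}=(\ker\Phi_q(\lambda))^\perp=\mathbb C^r$; since $\Ran\Phi_q^*(\lambda)$ is a subspace of the finite-dimensional space $\mathbb C^r$ it is automatically closed, so $\Ran\Phi_q^*(\lambda)=\mathbb C^r$.

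For part (ii), the idea is to match the boundary conditions $ay(0)=ay(1)=0$ against the solution structure. Every solution of $\mathfrak t_q y=\lambda y$ has the form $y(x)=u_q(x,\lambda)y(0)$. A short computation using $a=\frac1{\sqrt2}(I,-I)$ gives $\vartheta a^*=\frac{1}{\sqrt2\i}(I,I)^\top$, whence $\Ran(\vartheta a^*)=\ker a=\{(w,w)^\top:w\in\mathbb C^r\}$ and $a\vartheta a^*=0$. Thus the left condition $ay(0)=0$ is equivalent to $y(0)\in\Ran(\vartheta a^*)$, i.e.\ to $y(0)=\vartheta a^*c$ for a unique $c\in\mathbb C^r$ (uniqueness because $\vartheta a^*$ is injective). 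For such $c$ one has $y(x)=u_q(x,\lambda)\vartheta a^*c=\varphi_q(x,\lambda)c=[\Phi_q(\lambda)c](x)$ by (\ref{varphipsiDef}), and the left condition holds automatically since $a\varphi_q(0,\lambda)=a\vartheta a^*=0$. Finally, using $s_q(\lambda)=a\varphi_q(1,\lambda)$ from (\ref{scDef}), the right condition becomes $0=ay(1)=s_q(\lambda)c$, that is $c\in\ker s_q(\lambda)$. Combining both implications yields exactly (\ref{kerT}). I would also note that the functions $\varphi_q(\cdot,\lambda)c$ lie in $D(T_q)$ automatically: they solve the Cauchy problem, hence are continuous and satisfy $\mathfrak t_q(\varphi_q(\cdot,\lambda)c)=\lambda\varphi_q(\cdot,\lambda)c\in L_2$, so no separate regularity verification is needed.

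The computations are all elementary; the only point requiring care is the bookkeeping that identifies the abstract boundary subspace $\ker a$ with the range $\Ran(\vartheta a^*)$ of the initial data of the matrix solution $\varphi_q$, since this is precisely what makes $c\mapsto\Phi_q(\lambda)c$ a bijection between $\mathbb C^r$ and the space of solutions obeying the left boundary condition. Once this identification is in place, both boundary conditions collapse into the single algebraic condition $c\in\ker s_q(\lambda)$, and the injectivity established in part~(i) shows moreover that the multiplicity of $\lambda$ equals $\dim\ker s_q(\lambda)$.
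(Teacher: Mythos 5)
Your proof is correct and follows essentially the same route as the paper: part (i) via the factorization $\Phi_q(\lambda)=(\mathscr I+\mathscr K_q)\Phi_0(\lambda)$ and the explicit form of $\Phi_0(\lambda)$, and part (ii) by identifying the solutions satisfying the left boundary condition with $\varphi_q(\cdot,\lambda)c$, $c\in\mathbb C^r$, and reducing the right boundary condition to $c\in\ker s_q(\lambda)$. The only difference is that you spell out the identification $\ker a=\Ran(\vartheta a^*)$ and the $D(T_q)$ membership explicitly, which the paper leaves implicit.
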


\begin{proof}
Since the operator $\mathscr I+\mathscr K_q$ is a homeomorphism of the space $\mathbb H$, part~(i) easily follows from (\ref{PhiTransfOp}) and (\ref{Phi0Def}).

To prove part~(ii), note that for all $c\in\ker s_q(\lambda)$ the function $f(x):=\varphi_q(x,\lambda)c$ verifies the equality $\vartheta f'+\mathbf qf=\lambda f$ and the boundary conditions $af(0)=af(1)=0$. Conversely, the generic solution of the problem
$$
\vartheta f'+\mathbf qf=\lambda f,\qquad af(0)=0,
$$
takes the form $f(x)=\varphi_q(x,\lambda)c$, $c\in\mathbb C^r$, while the boundary condition $af(1)=0$ means $c\in\ker s_q(\lambda)$. Therefore, the equality (\ref{kerT}) follows.
\end{proof}

Now, denote by $\lambda_j:=\lambda_j(q)$, $j\in\mathbb Z$, the pairwise distinct eigenvalues of the operator $T_q$ labeled in increasing order so that $\lambda_0\le0<\lambda_1$, and let $\alpha_j:=\alpha_j(q)$ be the corresponding norming matrices defined by (\ref{NormMatrDef}).
Then the basic properties of the operator $T_q$ are formulated in the following theorem:

\begin{theorem}\label{DirPropTh}
Let $q\in\mathfrak Q_p$, $p\ge1$. Then the following statements hold true:
\begin{itemize}
\item[$(i)$]the operator $T_q$ is self-adjoint;
\item[$(ii)$]the spectrum $\sigma(T_q)$ of the operator $T_q$ consists of isolated real eigenvalues of finite multiplicity and, moreover,
$$
\sigma(T_q)=\{\lambda\in\mathbb C \ | \ \ker s_q(\lambda)\neq\{0\}\};
$$
\item[$(iii)$]denote by $P_{q,j}$ the orthogonal projector onto $\ker(T_q-\lambda_j\mathscr I)$; then
\begin{equation}\label{ResIdent}
\sum_{j=-\infty}^\infty P_{q,j}=\mathscr I;
\end{equation}
\item[$(iv)$]for every $j\in\mathbb Z$ the norming matrix $\alpha_j$ is non-negative and
\begin{equation}\label{ProjForm}
P_{q,j}=\Phi_q(\lambda_j)\alpha_j\Phi_q^*(\lambda_j).
\end{equation}
\end{itemize}
\end{theorem}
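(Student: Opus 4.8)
The plan is to establish (i) and (ii) by fairly direct means, and then to construct the resolvent of $T_q$ explicitly; that resolvent will simultaneously pin down the spectrum in (ii), yield the residue formula (\ref{ProjForm}) of (iv), and deliver the resolution of identity (\ref{ResIdent}) of (iii). For (i), I would first verify that $T_q$ is symmetric: since $\vartheta^*=-\vartheta$ and $\bq=\bq^*$, integration by parts gives, for $y,z\in D(T_q)$,
$$
\langle \mathfrak t_q y, z\rangle-\langle y, \mathfrak t_q z\rangle=\bigl[z(x)^*\vartheta y(x)\bigr]_0^1,
$$
and the boundary form $z^*\vartheta y=\tfrac{1}{\i}(z_1^*y_1-z_2^*y_2)$ vanishes at $x=0$ and $x=1$ by virtue of $y_1(k)=y_2(k)$, $z_1(k)=z_2(k)$. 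To pass from symmetry to self-adjointness, I would show that $\Ran(T_q-\lambda\mathscr I)=\mathbb H$ for some non-real $\lambda$, i.e. that the problem $(\mathfrak t_q-\lambda)y=f$, $ay(0)=ay(1)=0$, is uniquely solvable for every $f\in\mathbb H$. This reduces to the invertibility of $s_q(\lambda)$ for non-real $\lambda$: by Lemma~\ref{kerTLemma} together with the injectivity (\ref{kerPhi}) of $\Phi_q(\lambda)$, any $\lambda$ with $\ker s_q(\lambda)\ne\{0\}$ is a genuine eigenvalue of a symmetric operator and hence real, so $s_q(\lambda)$ is invertible whenever $\Im\lambda\ne0$.

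For (ii) the identity (\ref{kerT}) and the injectivity (\ref{kerPhi}) give $\dim\ker(T_q-\lambda\mathscr I)=\dim\ker s_q(\lambda)\le r$, so every eigenvalue has finite multiplicity and $\lambda\in\sigma(T_q)$ forces $\ker s_q(\lambda)\ne\{0\}$, i.e. $\det s_q(\lambda)=0$. Conversely, where $\ker s_q(\lambda)=\{0\}$ the Green's function built below furnishes a bounded inverse, so such $\lambda$ lie in the resolvent set; this gives the equality $\sigma(T_q)=\{\lambda\,|\,\ker s_q(\lambda)\ne\{0\}\}$ and shows there is no continuous spectrum. Finally, by (\ref{Srepr}) and the asymptotics of Lemma~\ref{SCLemma}, $\det s_q$ is entire and behaves like $(\sin\lambda)^r$ as $\lambda\to\infty$ in $\mathcal O$; in particular $\det s_q\not\equiv0$, so its zeros—the eigenvalues—are isolated with no finite accumulation point.

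For (iv) I would build the resolvent kernel from $\varphi_q(\cdot,\lambda)$, which already satisfies the left boundary condition, and the Weyl solution $\chi_q(\cdot,\lambda):=\psi_q(\cdot,\lambda)+\varphi_q(\cdot,\lambda)m_q(\lambda)$, which satisfies the right one since $a\chi_q(1,\lambda)=c_q(\lambda)+s_q(\lambda)m_q(\lambda)=0$ by (\ref{WeylFdef}). The Green's function $G_q(x,t,\lambda)$ is the standard glued expression in $\varphi_q$ and $\chi_q$, and its only $\lambda$-singularities near $\lambda_j$ come from the factor $m_q(\lambda)$, whose residue is $-\alpha_j$ by (\ref{NormMatrDef}). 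Computing $\res_{\lambda=\lambda_j}(T_q-\lambda\mathscr I)^{-1}$ then collapses to the integral operator with kernel $\varphi_q(x,\lambda_j)\alpha_j\varphi_q(t,\lambda_j)^*$, that is, to $-\Phi_q(\lambda_j)\alpha_j\Phi_q^*(\lambda_j)$. Since $T_q$ is self-adjoint, the Riesz projector at an isolated eigenvalue is the orthogonal projector $P_{q,j}$ and equals minus this residue, which proves (\ref{ProjForm}). Non-negativity of $\alpha_j$ follows from $P_{q,j}\ge0$, or directly from $m_q$ being a matrix Herglotz function, whose residues at real poles are negative semidefinite, so that $\alpha_j=-\res_{\lambda=\lambda_j}m_q\ge0$.

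Statement (iii) then follows: by (i) $T_q$ is self-adjoint and by (ii) its spectrum is purely discrete with no continuous part, so the spectral theorem gives $\sum_j P_{q,j}=\mathscr I$ in the strong sense. More constructively, I would integrate the resolvent over expanding contours $\Gamma_N$ (circles $|\lambda|=\pi(N+\tfrac12)$, which avoid the spectrum by (ii)), so that $-\tfrac{1}{2\pi\i}\oint_{\Gamma_N}(T_q-\lambda\mathscr I)^{-1}\,\d\lambda$ equals the partial sum $\sum_{|\lambda_j|<\pi(N+1/2)}P_{q,j}$; comparison with the free operator via the asymptotics (\ref{mAsymp}), $\|m_q(\lambda)+(\cot\lambda)I\|=\o(1)$ on $\mathcal O$, forces these integrals to converge strongly to $\mathscr I$. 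I expect the main obstacle to be the rigorous resolvent construction and the attendant estimates under the weak hypothesis $q\in\mathfrak Q_p$, $p\ge1$: multiplication by $q$ is not a bounded (nor obviously form-bounded) perturbation of the free operator, so the well-posedness of $G_q$, the uniform bounds along $\Gamma_N$, and the strong-limit passage must be routed through the transformation operator $\mathscr I+\mathscr K_q$ of Lemma~\ref{SCLemma} rather than through naive perturbation theory. The reassuring point is that only the qualitative decay (\ref{mAsymp}), not any precise asymptotic rate, is needed for (\ref{ResIdent}).
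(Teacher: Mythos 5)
Your proposal is correct and follows essentially the same route as the paper, which does not prove Theorem~\ref{DirPropTh} itself but defers to the proof in \cite{dirac1}: symmetry via the boundary form, self-adjointness and the spectrum characterization via invertibility of $s_q(\lambda)$ and Lemma~\ref{kerTLemma}, the projector formula from the residue of the Green's function built on $\varphi_q$ and the Weyl solution, and completeness from discreteness of the spectrum (or contour integration of the resolvent). The only blemish is a sign slip in one intermediate sentence (the integral operator with kernel $\varphi_q(x,\lambda_j)\alpha_j\varphi_q(t,\lambda_j)^*$ is $+\Phi_q(\lambda_j)\alpha_j\Phi_q^*(\lambda_j)$, so the residue's kernel should carry a minus sign), which does not affect the final identity (\ref{ProjForm}).
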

The proof of Theorem~\ref{DirPropTh} repeats the proof in \cite{dirac1}. In particular, part~(ii) together with (\ref{WeylFdef}) implies that eigenvalues of the operator $T_q$ are poles of the Weyl--Titchmarsh function $m_q$.

By virtue of the relations (\ref{PhiTransfOp}), (\ref{ResIdent}) and (\ref{ProjForm}), the operators $\Phi_q(\lambda)$ and the function $K_q$ from Lemma~\ref{SCLemma} will play an important role in this investigation.

\section{Direct spectral problem}

Here we prove the necessity part of Theorem~\ref{Th1}: we show that for an arbitrary potential $q\in\mathfrak Q_p$, $p\ge1$, the spectral data of the operator $T_q$ satisfy the conditions $(B_1)$--$(B_4)$.
Throughout this section we use the abbreviations $\lambda_j:=\lambda_j(q)$ and $\alpha_j:=\alpha_j(q)$ for eigenvalues and norming matrices of the operator $T_q$, respectively.

\subsection{Condition $(B_1)$}

In this subsection we prove the following proposition:

\begin{proposition}\label{B1Lemma}
For an arbitrary potential $q\in\mathfrak Q_p$, $p\ge1$, the spectral data $\mathfrak a_q$ of the operator $T_q$ satisfy the condition $(B_1)$, i.e. the following asymptotics hold true:
\begin{equation}\label{LambdaAsymp}
\sum_{\lambda_j\in\Delta_n}|\pi n-\lambda_j|=\o(1),\qquad |n|\to\infty,
\end{equation}
and
\begin{equation}\label{AlphaAsymp}
\bigg\|I-\sum_{\lambda_j\in\Delta_n}\alpha_j\bigg\|=\o(1),\qquad |n|\to\infty,
\end{equation}
where $\Delta_n:=\left(\pi n-\frac{\pi}{2},\pi n+\frac{\pi}{2}\right]$.
\end{proposition}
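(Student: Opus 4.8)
The plan is to localize both the eigenvalues and the norming matrices by comparing the entire functions $s_q$ and $c_q$ with their free counterparts $(\sin\lambda)I$ and $(\cos\lambda)I$ along suitable contours encircling the point $\pi n$, exploiting that the integral terms in (\ref{Srepr}) and (\ref{Crepr}) are uniformly small for large $|\Re\lambda|$. First I would record, from the representations (\ref{Srepr}), (\ref{Crepr}) together with Lemma~\ref{RefRiemannLebesgue}, that for every fixed $h>0$,
$$
\sup_{|\Im\lambda|\le h}\Big(\|s_q(\lambda)-(\sin\lambda)I\|+\|c_q(\lambda)-(\cos\lambda)I\|\Big)\longrightarrow0,\qquad|\Re\lambda|\to\infty .
$$
I then introduce the rectangles $\Pi_n:=\{\lambda\ |\ |\Re\lambda-\pi n|\le\pi/2,\ |\Im\lambda|\le h\}$ and, for $0<\delta<\pi/2$, the disks $D_{n,\delta}:=\{\lambda\ |\ |\lambda-\pi n|<\delta\}$. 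By $\pi$-periodicity of $|\sin\lambda|$, the function $|\sin\lambda|=|\sin(\lambda-\pi n)|$ is bounded below by an $n$-independent constant on $\partial\Pi_n$ (on the vertical sides $|\sin\lambda|=\cosh(\Im\lambda)\ge1$) and on $\Pi_n\setminus D_{n,\delta}$; consequently $|\det s_0(\lambda)|=|\sin\lambda|^r$ is bounded away from $0$ there, uniformly in $n$.

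For the eigenvalue asymptotics (\ref{LambdaAsymp}), I would argue with the determinant. Since the entries of $s_q$ are bounded on $\Pi_n$ and converge uniformly to those of $s_0$, one has $\det s_q\to\det s_0=(\sin\lambda)^r$ uniformly on $\Pi_n$. Hence, for each fixed $\delta$ and all $|n|$ large, $|\det s_q|\ge|\det s_0|-\o(1)>0$ on $\Pi_n\setminus D_{n,\delta}$, so $\det s_q$ has no zeros there; by the argument principle (Rouch\'e) its total number of zeros in $\Pi_n$, counted with multiplicity, equals that of $(\sin\lambda)^r$, namely $r$. By Theorem~\ref{DirPropTh}(ii) these zeros are exactly the eigenvalues in $\Pi_n$ (with multiplicities), so every eigenvalue lying in $\Delta_n\subset\Pi_n$ satisfies $|\pi n-\lambda_j|<\delta$, and there are at most $r$ distinct such eigenvalues. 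Therefore $\max_{\lambda_j\in\Delta_n}|\pi n-\lambda_j|\to0$ and $\sum_{\lambda_j\in\Delta_n}|\pi n-\lambda_j|\le r\max_{\lambda_j\in\Delta_n}|\pi n-\lambda_j|=\o(1)$.

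For the norming matrices I would pass to the residue theorem. By (\ref{NormMatrDef}) and (\ref{WeylFdef}) one has $\alpha_j=\res_{\lambda=\lambda_j}s_q(\lambda)^{-1}c_q(\lambda)$, and since no eigenvalue lies on $\partial\Pi_n$ for large $|n|$,
$$
\sum_{\lambda_j\in\Delta_n}\alpha_j=\frac1{2\pi\i}\oint_{\partial\Pi_n}s_q(\lambda)^{-1}c_q(\lambda)\,\d\lambda,\qquad
I=\frac1{2\pi\i}\oint_{\partial\Pi_n}s_0(\lambda)^{-1}c_0(\lambda)\,\d\lambda ,
$$
the second identity holding because $s_0^{-1}c_0=(\cot\lambda)I$ has residue $I$ at $\pi n$. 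Subtracting and using $s_q^{-1}c_q-s_0^{-1}c_0=s_q^{-1}(c_q-c_0)+s_q^{-1}(s_0-s_q)s_0^{-1}c_0$, the bounded length of $\partial\Pi_n$ together with the uniform smallness of $c_q-c_0$ and $s_0-s_q$ and the boundedness of $s_0^{-1}c_0=(\cot\lambda)I$ there yields $\|I-\sum_{\lambda_j\in\Delta_n}\alpha_j\|=\o(1)$, which is (\ref{AlphaAsymp}).

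The main obstacle is the uniform invertibility and boundedness of $s_q(\lambda)^{-1}$ on the contours, on which the uniformity of both the Rouch\'e count and the residue estimate rests. Writing $s_q=s_0\bigl(I+s_0^{-1}(s_q-s_0)\bigr)$ and using that $\|s_0^{-1}\|$ is bounded on $\partial\Pi_n$ (from the lower bound on $|\sin\lambda|$) while $\|s_q-s_0\|\to0$ uniformly, a Neumann-series argument shows that $s_q$ is invertible on $\partial\Pi_n$ with $\sup_{|n|\ \mathrm{large}}\ \sup_{\partial\Pi_n}\|s_q^{-1}\|<\infty$. This boundedness is precisely what is needed to estimate the integrand above, and the non-vanishing of $\det s_q$ on $\partial\Pi_n$ it provides also guarantees that the half-open interval $\Delta_n$ is captured exactly by the contour, so that the sums of residues coincide with the sums over $\lambda_j\in\Delta_n$.
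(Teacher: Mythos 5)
Your proposal is correct and follows essentially the same route as the paper: eigenvalue localization via Rouch\'e's theorem applied to $\det s_q$ (which the paper delegates to the cited results in Remark~\ref{B1tildeSrem}), and a contour-integral residue computation for $\sum_{\lambda_j\in\Delta_n}\alpha_j$ compared against the free case. The only cosmetic differences are that you use rectangles $\Pi_n$ instead of the circles $|\lambda-\pi n|=1$ and that you re-derive inline the uniform smallness of $m_q(\lambda)+(\cot\lambda)I$ on the contour, which the paper has already packaged as the estimate (\ref{mAsymp}) in Lemma~\ref{SCLemma}(iii).
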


As in \cite{dirac1}, the proof of Proposition~\ref{B1Lemma} is based on the claim that eigenvalues of the operator $T_q$ are zeros of certain entire function. However, as opposed to the case of square-integrable potential, we can establish only the rough asymptotics of eigenvalues and norming matrices.
Thus, we start the proof of Proposition~\ref{B1Lemma} from making the following remark:

\begin{remark}\label{B1tildeSrem}
As follows from Theorem~\ref{DirPropTh}, part (ii), the eigenvalues $(\lambda_j)_{j\in\mathbb Z}$ of the operator $T_q$ are zeros of the entire function
\begin{equation}\label{B1tildeSdef}
\widetilde s_q(\lambda):=\det s_q(\lambda).
\end{equation}
Since the function $s_q(\lambda)$ allows a representation (\ref{Srepr}), repeating the proofs in \cite{trushzeros} one can use Rouche's theorem to show that the set of zeros of the function $\widetilde s_q(\lambda)$ can be indexed (counting multiplicities) by numbers $n\in\mathbb Z$ so that the corresponding sequence $(\xi_n)_{n\in\mathbb Z}$ has the asymptotics
\begin{equation}\label{DetSzerosAsymp}
\xi_{nr+j}=\pi n+\o(1),\qquad j=1,\ldots,r,\ \ |n|\to\infty.
\end{equation}

Further, it also follows that the set of zeros of the entire function
$$
\widetilde c_q(\lambda):=\det c_q(\lambda)
$$
can be indexed (counting multiplicities) by numbers $n\in\mathbb Z$ so that the corresponding sequence $(\zeta_n)_{n\in\mathbb Z}$ has the asymptotics
\begin{equation}\label{DetCzerosAsymp}
\zeta_{nr+j}=\pi \left(n+\frac12\right)+\o(1),\qquad j=1,\ldots,r,\ \ |n|\to\infty.
\end{equation}
\end{remark}

Now the proof of Proposition~\ref{B1Lemma} is straightforward:

\begin{proofB1}
Since, by Remark~\ref{B1tildeSrem}, eigenvalues $(\lambda_j)_{j\in\mathbb Z}$ of the operator $T_q$ are zeros of the entire function $\widetilde s_q(\lambda)$, the asymptotics (\ref{LambdaAsymp}) directly follow from (\ref{DetSzerosAsymp}). Thus it only remains to prove (\ref{AlphaAsymp}).

For $n\in\mathbb Z$ denote
$$
\beta_n:=I-\sum_{\lambda_j\in\Delta_n}\alpha_j.
$$
It follows from the definition (\ref{NormMatrDef}) of $\alpha_j$ and from the asymptotics (\ref{LambdaAsymp}) of $(\lambda_j)_{j\in\mathbb Z}$ that there exists $n_0\in\mathbb N$ such that for all $n\in\mathbb Z$, $|n|>n_0$,
$$
\sum_{\lambda_j\in\Delta_n}\alpha_j=-\frac1{2\pi \i}\
\oint\limits_{|\lambda-\pi n|=1}m_q(\lambda)\ \d \lambda.
$$
Therefore, for all $n\in\mathbb Z$, $|n|>n_0$,
$$
\beta_n=\frac1{2\pi \i}\
\oint\limits_{|\lambda-\pi n|=1}(m_q(\lambda)+(\cot\lambda)I)\ \d\lambda.
$$
Now taking into account (\ref{mAsymp}), we observe that $\|\beta_n\|=\o(1)$, $|n|\to\infty$, as desired.
\end{proofB1}

\subsection{Conditions $(B_2)$ and $(B_3)$}

In this subsection we prove that the spectral data for the operators under consideration satisfy the conditions $(B_2)$ and $(B_3)$. We start from proving the condition $(B_2)$:

\begin{proposition}\label{B2Lemma}
For an arbitrary potential $q\in\mathfrak Q_p$, $p\ge1$, the spectral data $\mathfrak a_q$ of the operator $T_q$ satisfy the condition $(B_2)$, i.e. there exists $N_0\in\mathbb N$ such that for all natural $N>N_0$,
\begin{equation}\label{B2Eq}
\sum_{n=-N}^{N}\sum_{\lambda_j\in\Delta_n}\rank \alpha_j=(2N+1)r.
\end{equation}
\end{proposition}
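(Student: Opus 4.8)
The plan is to reduce condition $(B_2)$ to counting, with multiplicities, the zeros of the entire function $\widetilde s_q=\det s_q$ from Remark~\ref{B1tildeSrem} that lie inside the symmetric interval $\bigcup_{n=-N}^{N}\Delta_n=(-\pi(N+\tfrac12),\pi(N+\tfrac12)]$. First I would recall that, by Lemma~\ref{kerTLemma}(ii) together with the injectivity of $\Phi_q(\lambda_j)$ from Lemma~\ref{kerTLemma}(i) and the fact that $\rank\alpha_j$ is the multiplicity of $\lambda_j$, one has $\rank\alpha_j=\dim\ker(T_q-\lambda_j\mathscr I)=\dim\ker s_q(\lambda_j)$. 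The crucial point, which I would isolate as a separate claim, is that this number also equals the \emph{order} of $\lambda_j$ as a zero of $\widetilde s_q$; granting this, the left-hand side of (\ref{B2Eq}) is exactly the number of zeros of $\widetilde s_q$, counted with multiplicity, in the above interval.

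The main obstacle is precisely this claim, since for a matrix-valued entire function the order of a zero of the determinant (the sum of the partial multiplicities) strictly exceeds the kernel dimension whenever some partial multiplicity is at least two. I would rule this out using the self-adjointness of $T_q$ (Theorem~\ref{DirPropTh}(i)), which forces the eigenvalues to be semisimple. Concretely, suppose $\widetilde s_q$ had a zero at $\lambda_j$ of order exceeding $\dim\ker s_q(\lambda_j)$; then $s_q$ would admit a Jordan chain of length two, i.e. there would be vectors $v_0\neq0$ and $v_1$ with $s_q(\lambda_j)v_0=0$ and $s_q(\lambda_j)v_1+s_q'(\lambda_j)v_0=0$. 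Using $s_q(\lambda)=a\varphi_q(1,\lambda)$, $\varphi_q(0,\lambda)=\vartheta a^*$, and the identity $a\vartheta a^*=0$, the function $y_0:=\varphi_q(\cdot,\lambda_j)v_0$ is a genuine eigenfunction, while $y_1:=\varphi_q(\cdot,\lambda_j)v_1+\partial_\lambda\varphi_q(\cdot,\lambda_j)v_0$ satisfies the boundary conditions $ay_1(0)=ay_1(1)=0$ by the chain relation and hence lies in $D(T_q)$. Differentiating the Cauchy problem (\ref{varphiCauchyPr}) in $\lambda$ gives $(\mathfrak t_q-\lambda_j)\partial_\lambda\varphi_q(\cdot,\lambda_j)=\varphi_q(\cdot,\lambda_j)$, whence $(T_q-\lambda_j\mathscr I)y_1=y_0\neq0$. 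Thus $y_1$ is a generalized eigenvector, contradicting self-adjointness; this proves the claim for every eigenvalue.

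It then remains to count the zeros, and here I would invoke the indexing of Remark~\ref{B1tildeSrem}: the zeros $(\xi_k)_{k\in\mathbb Z}$ of $\widetilde s_q$ enumerate all zeros with multiplicity and satisfy $\xi_{nr+j}=\pi n+\o(1)$, $j=1,\dots,r$, as $|n|\to\infty$, by (\ref{DetSzerosAsymp}). Consequently, for all sufficiently large $N$ the boundary points $\pm\pi(N+\tfrac12)$ lie in gaps bounded away from the zeros, so no zero straddles the endpoints, and the zeros lying in $(-\pi(N+\tfrac12),\pi(N+\tfrac12)]$ are precisely those with index $k\in(-Nr,(N+1)r]$. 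Their number is $(N+1)r-(-Nr)=(2N+1)r$, which, combined with the claim of the previous paragraph, yields (\ref{B2Eq}) and fixes the threshold $N_0$.
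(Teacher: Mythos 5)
Your proposal is correct, but it follows a genuinely different route from the paper's. The paper never proves that $\rank\alpha_j$ equals the order of $\lambda_j$ as a zero of $\widetilde s_q$ for \emph{all} $j$: its Lemma~\ref{B2multLemma} gets this only for $\lambda_j\in\Delta_n$ with $|n|$ large, because its argument runs through the Weyl function $m_q=-s_q^{-1}c_q$ having simple poles and needs $c_q(\lambda)^{-1}$ to be analytic near $\lambda_j$, which (\ref{DetCzerosAsymp}) guarantees only for large $|\lambda_j|$. The finitely many low-lying eigenvalues are then handled not by counting zeros at all, but by a Riesz-basis comparison: Lemma~\ref{B2MainLemma} (resting on the vector analogue of Kadec's $1/4$-theorem) shows that $\mathcal E_N\cup\mathcal B_N$ is a Riesz basis, the spectral decomposition of $T_q$ together with the transformation operator $\mathscr I+\mathscr K_q$ shows that $\mathcal B_0$ is a basis, and equating the cardinalities of $\mathcal E_N$ and $\mathcal B_0\setminus\mathcal B_N$ yields (\ref{B2Eq}). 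You instead remove the restriction to large $|n|$ by a semisimplicity argument: self-adjointness of $T_q$ forbids the generalized eigenvector $y_1=\varphi_q(\cdot,\lambda_j)v_1+\partial_\lambda\varphi_q(\cdot,\lambda_j)v_0$ that a length-two Jordan chain of $s_q$ would produce (your computations are sound, since $\partial_\lambda\varphi_q(0,\lambda)=0$ and $a\vartheta a^*=0$), so the determinant order equals $\dim\ker s_q(\lambda_j)=\rank\alpha_j$ at every eigenvalue, and a direct count of zeros via (\ref{DetSzerosAsymp}) then finishes. What your route buys is a stronger intermediate fact (rank equals determinant multiplicity everywhere, not just asymptotically) and independence from the Riesz-basis machinery; what it costs is the imported Gohberg--Sigal/local Smith form theorem that an excess of the determinant's zero order over $\dim\ker$ forces a Jordan chain of length two --- a standard but nontrivial fact which the paper deliberately avoids and which you should cite explicitly, as it is the crux of your argument. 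Two minor points to tighten: the finitely many zeros of $\widetilde s_q$ in blocks of small index are real (they are eigenvalues of the self-adjoint $T_q$, by Theorem~\ref{DirPropTh}) and fixed, hence lie in $\left(-\pi\left(N+\frac12\right),\pi\left(N+\frac12\right)\right]$ for all sufficiently large $N$ --- this is what actually justifies your endpoint claim; and note that the paper's Riesz-basis detour is not wasted effort in its overall economy, since Lemma~\ref{B2MainLemma} is reused in the inverse problem (Lemma~\ref{InvAuxLemma2}), whereas your argument serves this proposition only.
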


The proof of Proposition~\ref{B2Lemma} involves a technique based on the theory of Riesz bases. However, before starting the essential part of the proof we have to establish some auxiliary results.
Namely, recalling Remark~\ref{B1tildeSrem} claiming that eigenvalues $(\lambda_j)_{j\in\mathbb Z}$ of the operator $T_q$ are zeros of the entire function $\widetilde s_q(\lambda)$, we need to prove the following lemma:

\begin{lemma}\label{B2multLemma}
Let $n_j$, $j\in\mathbb Z$, denote the multiplicity of zero $\lambda_j$ of the function $\widetilde s_q(\lambda)$.
Then there exists $N_1\in\mathbb N$ such that for all $\lambda_j\in\Delta_n$, $|n|>N_1$,
\begin{equation}\label{B2auxMultEq}
n_j=\rank\alpha_j.
\end{equation}
\end{lemma}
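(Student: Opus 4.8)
The plan is to show that the order $n_j$ of the zero of $\widetilde s_q=\det s_q$ at an eigenvalue $\lambda_j\in\Delta_n$ coincides with $\rank\alpha_j$ once $|n|$ is large, by identifying both with the local geometric data of $s_q$ at $\lambda_j$. The restriction to large $|n|$ will enter precisely through the invertibility of $c_q$ near $\lambda_j$.

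First I would reduce the claim to the equality between $n_j$ and the \emph{geometric} multiplicity $\dim\ker s_q(\lambda_j)$. By Lemma~\ref{kerTLemma} the operator $\Phi_q(\lambda_j)$ is injective, so by (\ref{kerT}) the multiplicity of the eigenvalue $\lambda_j$ equals $\dim\ker s_q(\lambda_j)$. On the other hand, (\ref{ProjForm}) together with the injectivity of $\Phi_q(\lambda_j)$ and the surjectivity $\Ran\Phi_q^*(\lambda_j)=\mathbb C^r$ from (\ref{kerPhi}) gives $\rank P_{q,j}=\rank\alpha_j$. Since $P_{q,j}$ projects onto $\ker(T_q-\lambda_j\mathscr I)$, we get $\rank\alpha_j=\dim\ker s_q(\lambda_j)$, and it remains to prove $n_j=\dim\ker s_q(\lambda_j)$.

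The heart of the argument is to establish that, for $|n|$ large, the matrix function $s_q(\lambda)^{-1}$ has a pole of order \emph{exactly one} at $\lambda_j$. To secure the invertibility of $c_q$ near $\lambda_j$, I would choose $N_1$ using the asymptotics (\ref{DetCzerosAsymp}) and (\ref{LambdaAsymp}): for $|n|>N_1$ every $\lambda_j\in\Delta_n$ lies within $\o(1)$ of $\pi n$, whereas all zeros of $\det c_q$ cluster near the points $\pi(m+\tfrac12)$, so $c_q$ is holomorphic and invertible on a fixed disc about $\lambda_j$. On that disc, (\ref{WeylFdef}) yields $s_q(\lambda)^{-1}=-m_q(\lambda)\,c_q(\lambda)^{-1}$ with $c_q^{-1}$ holomorphic. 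Since $m_q$ is a matrix-valued Herglotz function, its poles on the real axis are simple, so $s_q^{-1}$ has a pole of order at most one; and since $\det s_q(\lambda_j)=0$, the function $s_q^{-1}$ cannot be holomorphic at $\lambda_j$, so the pole has order at least one. Thus the pole is of order exactly one. (In fact $\res_{\lambda_j}s_q^{-1}=\alpha_j c_q(\lambda_j)^{-1}\neq0$, which simultaneously confirms simplicity and that $\alpha_j=-\res_{\lambda_j}m_q$ is the residue of a simple pole.)

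Finally I would invoke the local Smith factorization $s_q(\lambda)=E(\lambda)\,\operatorname{diag}\!\big((\lambda-\lambda_j)^{\kappa_1},\dots,(\lambda-\lambda_j)^{\kappa_r}\big)\,F(\lambda)$, where $E,F$ are analytic and invertible at $\lambda_j$ and $\kappa_1,\dots,\kappa_r\ge0$ are the partial multiplicities. Then $n_j=\sum_i\kappa_i$, the order of the pole of $s_q^{-1}$ equals $\max_i\kappa_i$, and $\dim\ker s_q(\lambda_j)=\#\{i:\kappa_i\ge1\}$. Because the pole order is one, $\max_i\kappa_i=1$, which forces every nonzero $\kappa_i$ to equal $1$; hence $n_j=\#\{i:\kappa_i\ge1\}=\dim\ker s_q(\lambda_j)=\rank\alpha_j$, establishing (\ref{B2auxMultEq}). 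The main obstacle is the simple-pole property of $s_q^{-1}$, since it is exactly what couples the algebraic count $n_j=\sum_i\kappa_i$ with the geometric count $\dim\ker s_q(\lambda_j)$; it is also where the threshold $N_1$ becomes unavoidable, because for small $|n|$ the zero $\lambda_j$ could coincide with a zero of $\det c_q$ and thereby destroy the invertibility of $c_q$ used above.
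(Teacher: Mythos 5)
Your proof is correct, and its analytic core coincides with the paper's: both arguments rest on (a) the identity $\rank\alpha_j=\dim\ker s_q(\lambda_j)$ obtained from Lemma~\ref{kerTLemma} and (\ref{ProjForm}), (b) the simplicity of the real poles of the matrix Herglotz function $m_q$, and (c) the invertibility of $c_q$ near $\lambda_j$ for $|n|$ large, guaranteed by (\ref{DetCzerosAsymp}) and (\ref{LambdaAsymp}), which lets $s_q^{-1}=-m_qc_q^{-1}$ inherit the pole structure of $m_q$; the threshold $N_1$ enters at exactly the same place in both proofs. Where you genuinely differ is the bookkeeping that converts this into the multiplicity count. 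The paper splits the claim into two inequalities: $n_j\ge\rank\alpha_j$ follows from the expansion $s_q(\lambda)=s_q(\lambda_j)+\Or(\lambda-\lambda_j)$, and for $n_j\le\rank\alpha_j$ it introduces the matrix function $\widetilde Q_j(\lambda)=(I-Q_j)+(\lambda-\lambda_j)Q_j$, with $Q_j$ the orthogonal projector onto $\Ran\alpha_j$, shows that $\widetilde Q_j(\lambda)s_q(\lambda)^{-1}$ stays bounded near $\lambda_j$, and reads off the bound from the scalar identity $\det \widetilde Q_j(\lambda)s_q(\lambda)^{-1}=(\lambda-\lambda_j)^{\rank\alpha_j}/\widetilde s_q(\lambda)$. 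You instead invoke the local Smith factorization of $s_q$ at $\lambda_j$ and obtain the equality in one stroke: simplicity of the pole of $s_q^{-1}$ forces every partial multiplicity $\kappa_i$ to lie in $\{0,1\}$, whence $n_j=\sum_i\kappa_i=\#\{i:\kappa_i\ge1\}=\dim\ker s_q(\lambda_j)=\rank\alpha_j$. Your route uses only the order of the pole of $s_q^{-1}$ (the range of its residue plays no role), which makes the coupling between the algebraic count $\sum_i\kappa_i$ and the geometric count $\dim\ker s_q(\lambda_j)$ conceptually transparent; the paper's route avoids citing the Smith normal form, staying within explicit determinant manipulations, but in exchange needs the specific structure of the residue $-\alpha_j$ to build $\widetilde Q_j$. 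Both arguments are complete and correct.
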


\begin{proof}
Firstly, observe that by virtue of the relations (\ref{kerPhi}), (\ref{kerT}) and (\ref{ProjForm}), we have
$$
\dim\ker s_q(\lambda_j)=\rank\alpha_j,\qquad j\in\mathbb Z.
$$
Since
$$
s_q(\lambda)=s_q(\lambda_j)+\Or(\lambda-\lambda_j),\qquad \lambda\to\lambda_j,
$$
one can easily find that
\begin{equation}\label{B1AuxEq2}
n_j\ge\rank\alpha_j,\qquad j\in\mathbb Z.
\end{equation}
Now let us show that there exists $N_1\in\mathbb N$ such that for all $\lambda_j\in\Delta_n$, $|n|>N_1$,
\begin{equation}\label{B2AuxEq2}
n_j\le\rank\alpha_j.
\end{equation}

Indeed, since the function $m_q(\lambda)$ is meromorphic in $\mathbb C$ and has a pole of first order at point $\lambda_j$ (see, e.g., \cite{geszHerg}), we find that
\begin{equation}\label{B2MauxRepr}
m_q(\lambda)=\frac{\alpha_j}{\lambda-\lambda_j}+g_j(\lambda),
\end{equation}
where the function $g_j(\lambda)$ is analytic in the neighborhood of $\lambda_j$. Denote by $Q_j$ an orthogonal projector onto $\Ran\alpha_j$ and set
$$
\widetilde Q_j(\lambda):=(I-Q_j)+(\lambda-\lambda_j)Q_j.
$$
Then it follows from (\ref{B2MauxRepr}) that the function
$$
\lambda\mapsto\widetilde Q_j(\lambda)m_q(\lambda)
$$
is bounded in the neighborhood of $\lambda_j$. Furthermore, observe that by virtue of the asymptotics (\ref{DetCzerosAsymp}) of zeros of $\det c_q(\lambda)$, the function $\lambda\mapsto c_q(\lambda)^{-1}$ is analytic in the neighborhood of $\lambda_j$ for large values of $|\lambda_j|$.

Now, since $m_q(\lambda)=-s_q(\lambda)^{-1}c_q(\lambda)$ (see (\ref{WeylFdef})), we find that the function
$$
\widetilde Q_j(\lambda)s_q(\lambda)^{-1}=-\widetilde Q_j(\lambda)m_q(\lambda)c_q(\lambda)^{-1}
$$
is bounded in the neighborhood of $\lambda_j$ for large values of $|\lambda_j|$. Therefore, since
$$
\det \widetilde Q_j(\lambda)s_q(\lambda)^{-1}=\frac{(\lambda-\lambda_j)^{\rank\alpha_j}}{\widetilde s_q(\lambda)},
$$
we observe that there exists $N_1\in\mathbb N$ such that for all $\lambda_j\in\Delta_n$, $|n|>N_1$, the inequality (\ref{B2AuxEq2}) holds true. Together with (\ref{B1AuxEq2}), this proves the lemma.
\end{proof}

Now, recalling the asymptotics (\ref{DetSzerosAsymp}) of zeros of $\widetilde s_q(\lambda)$, we arrive at the following corollary which was the purpose of Lemma~\ref{B2multLemma}:

\begin{corollary}\label{SumRankLemma}
There exists $N_2\in\mathbb N$ such that for all $n\in\mathbb Z$, $|n|>N_2$,
$$
\sum_{\lambda_j\in\Delta_n}\rank\alpha_j=r.
$$
\end{corollary}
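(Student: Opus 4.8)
The plan is to combine Lemma~\ref{B2multLemma} with the asymptotic distribution of the zeros of $\widetilde s_q$ recorded in Remark~\ref{B1tildeSrem}. The key observation is that, by Theorem~\ref{DirPropTh}, part~(ii), the pairwise distinct eigenvalues $\lambda_j$ lying in $\Delta_n$ are precisely the distinct zeros of $\widetilde s_q$ in that interval, so that $\sum_{\lambda_j\in\Delta_n}n_j$ equals the total number of zeros of $\widetilde s_q$ in $\Delta_n$ counted with multiplicity.

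First I would translate the asymptotics (\ref{DetSzerosAsymp}) into a statement about the intervals $\Delta_n=\left(\pi n-\frac\pi2,\pi n+\frac\pi2\right]$. Since $\xi_{nr+j}=\pi n+\o(1)$ as $|n|\to\infty$ for $j=1,\dots,r$, there is $N'\in\mathbb N$ such that for $|n|>N'$ all of the $r$ zeros $\xi_{nr+1},\dots,\xi_{nr+r}$ (counted with multiplicity) satisfy $|\xi_{nr+j}-\pi n|<\frac\pi2$ and hence lie in $\Delta_n$; conversely, any zero $\xi_m\in\Delta_n$ has the form $\xi_{n'r+j}=\pi n'+\o(1)$ with $\pi n'$ within $\frac\pi2+\o(1)$ of $\pi n$, which for $|n|$ large forces $n'=n$. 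Therefore, for $|n|>N'$ the zeros of $\widetilde s_q$ in $\Delta_n$ are exactly $\xi_{nr+1},\dots,\xi_{nr+r}$, so the total multiplicity of zeros of $\widetilde s_q$ in $\Delta_n$ equals $r$.

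Next I would invoke Lemma~\ref{B2multLemma}: for $|n|>N_1$ one has $n_j=\rank\alpha_j$ for every $\lambda_j\in\Delta_n$. Summing over the distinct zeros $\lambda_j$ lying in $\Delta_n$ then gives, for $|n|>N_2:=\max\{N_1,N'\}$,
$$
\sum_{\lambda_j\in\Delta_n}\rank\alpha_j
=\sum_{\lambda_j\in\Delta_n}n_j
=r,
$$
as claimed.

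I expect the main obstacle to be the bookkeeping in the second step: one must rule out both the escape of a cluster zero from $\Delta_n$ and the intrusion of a zero belonging to a neighboring cluster (centered near $\pi(n\pm1)$), so that the count in $\Delta_n$ is exactly $r$ and not $r$ shifted by a few. This is precisely where the half-open normalization of $\Delta_n$ and the uniform $\o(1)$ control of the clusters around $\pi n$ are used; note also that the zeros $\zeta_m$ of $\det c_q$, which accumulate near the endpoints $\pi(n+\frac12)$ of $\Delta_n$, play no role here since they are not zeros of $\widetilde s_q$.
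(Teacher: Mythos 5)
Your proof is correct and follows exactly the route the paper intends: the paper derives this corollary directly from the asymptotics (\ref{DetSzerosAsymp}) of the zeros of $\widetilde s_q$ combined with Lemma~\ref{B2multLemma}, leaving the cluster-counting bookkeeping implicit, which you have simply written out in detail.
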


Now we can proceed to the principal part of the proof involving the theory of Riesz bases. The approach is based on Lemma~\ref{B2MainLemma} below, which will be used in solving both the direct and inverse spectral problems. The proof of the Lemma~\ref{B2MainLemma} uses a vector analogue of well-known Kadec's $1/4$-theorem which is established in Appendix~\ref{AppRiesz}.

Thus, let $\mathfrak a:=((\lambda_j,\alpha_j))_{j\in\mathbb Z}$ be an {\it arbitrary} sequence where $(\lambda_j)_{j\in\mathbb Z}$ is a strictly increasing sequence of real numbers such that $\lambda_0\le0<\lambda_1$ and $\alpha_j$, $j\in\mathbb Z$, are non-zero matrices in $M_r^+$.
Since the matrices $\alpha_j$, $j\in\mathbb Z$, are self-adjoint and non-negative, for every $j\in\mathbb Z$ there exists a system of pairwise orthogonal vectors $\{v_{j,k}\}_{k=1}^{\rank\alpha_j}$ in $\mathbb C^r$ such that
\begin{equation}\label{AlphaVrel}
\alpha_j=\sum_{k=1}^{\rank\alpha_j} (\ \cdot\ |v_{j,k})v_{j,k}.
\end{equation}
Denote by $\epsilon_k$, $k=1,\ldots,r$, a standard orthonormal basis for $\mathbb C^r$. Then the following lemma holds true:

\begin{lemma}\label{B2MainLemma}
Let $\mathfrak a:=((\lambda_j,\alpha_j))_{j\in\mathbb Z}$ be an arbitrary sequence satisfying the asymptotics (\ref{LambdaAsymp}) and (\ref{AlphaAsymp}), and assume that there exists $N_0\in\mathbb N$ such that for all $n\in\mathbb Z$, $|n|>N_0$,
\begin{equation}\label{partA2Eq}
\sum_{\lambda_j\in\Delta_n}\rank\alpha_j=r.
\end{equation}
Then there exists $N_1\in\mathbb N$ such that for all natural $N>N_1$ the system $\mathcal E_N\cup\mathcal B_N$, where
\begin{equation}\label{B2ENdef}
\mathcal E_N:=\left\{\frac1{\sqrt2}\e^{\i\pi nt}\epsilon_s\ \big|\ n\in\mathbb Z,\ |n|\le N,
\quad s=1,\ldots,r \right\}
\end{equation}
and
\begin{equation}\label{B2BNdef}
\mathcal B_N:=\left\{\e^{\i\lambda_jt}v_{j,k}\ \big|\ \lambda_j\in\Delta_n,\ |n|>N,
\quad 1\le k\le\rank\alpha_j \right\},
\end{equation}
is a Riesz basis for the space $\mathcal H:=L_2((-1,1),\mathbb C^r)$ (see Appendix~\ref{AppRiesz}).
\end{lemma}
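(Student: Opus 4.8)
The plan is to realize $\mathcal E_N\cup\mathcal B_N$, for $N$ large, as a small perturbation of an \emph{explicit} Riesz basis built from the orthonormal basis $\{\frac1{\sqrt2}\e^{\i\pi nt}\epsilon_s\mid n\in\mathbb Z,\ s=1,\dots,r\}$ of $\mathcal H$, and then to conclude by a stability result. Concretely, I would invoke a Paley--Wiener type stability theorem, whose tail estimate is supplied by the vector analogue of Kadec's theorem in Appendix~\ref{AppRiesz}. The advantage of this route is that such a perturbation theorem yields the Riesz bounds \emph{and} completeness at once, so it suffices to match the two systems index-for-index and to control the perturbation uniformly on the tail $|n|>N$.

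First I would fix $N\ge N_0$ and reindex $\mathcal B_N$. By hypothesis (\ref{partA2Eq}), for every $n$ with $|n|>N$ there are exactly $r$ pairs $(j,k)$ with $\lambda_j\in\Delta_n$ and $1\le k\le\rank\alpha_j$; enumerating them as $s=1,\dots,r$ produces frequencies $\mu_{n,s}$ (each some $\lambda_j\in\Delta_n$) and vectors $w_{n,s}$ (the corresponding $v_{j,k}$), so that $\mathcal B_N=\{\e^{\i\mu_{n,s}t}w_{n,s}\mid|n|>N,\ s=1,\dots,r\}$. Two smallness inputs are then read off the hypotheses. From (\ref{LambdaAsymp}), $|\mu_{n,s}-\pi n|\le\sum_{\lambda_j\in\Delta_n}|\pi n-\lambda_j|=\o(1)$ as $|n|\to\infty$, so the frequencies cluster at $\pi n$. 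From (\ref{AlphaVrel}) and (\ref{AlphaAsymp}), letting $W_n$ be the $r\times r$ matrix with columns $w_{n,s}$, the frame operator $\sum_{\lambda_j\in\Delta_n}\alpha_j=\sum_{s}(\,\cdot\,|w_{n,s})w_{n,s}$ tends to $I$; hence the singular values of $W_n$ tend to $1$, and in the polar decomposition $W_n=U_nP_n$ (with $U_n$ unitary, $P_n=(W_n^*W_n)^{1/2}$) one has $\|I-P_n\|=\o(1)$. The orthonormal systems $\{U_n\epsilon_s\}_{s=1}^r$ then approximate the directions, since $w_{n,s}-U_n\epsilon_s=U_n(P_n-I)\epsilon_s$.

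As reference system I would take $\mathcal E_N\cup\{\e^{\i\pi nt}U_n\epsilon_s\mid|n|>N\}$. This is a Riesz basis for $\mathcal H$, as it is the image of the orthonormal basis above under the block-diagonal operator that is the identity on the blocks $|n|\le N$ and equals $\sqrt2\,U_n$ on each (mutually orthogonal) frequency block $|n|>N$; this operator is bounded with bounded inverse. On $|n|\le N$ the two systems coincide, so the perturbation lives on the tail, where
$$
\e^{\i\pi nt}U_n\epsilon_s-\e^{\i\mu_{n,s}t}w_{n,s}
=\e^{\i\pi nt}(U_n\epsilon_s-w_{n,s})+\bigl(\e^{\i\pi nt}-\e^{\i\mu_{n,s}t}\bigr)w_{n,s}.
$$
The first (direction) part is handled blockwise: using the orthogonality of distinct blocks and $\sum_s c_{n,s}(U_n\epsilon_s-w_{n,s})=U_n(I-P_n)c_n$, its $\ell^2\to L_2$ norm is at most $\sqrt2\,\sup_{|n|>N}\|I-P_n\|$, which is $\o(1)$ in $N$. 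The second (frequency) part is the genuine Kadec contribution. For $N$ large both are uniformly small, and the vector analogue of Kadec's theorem (Appendix~\ref{AppRiesz}) then certifies that $\mathcal E_N\cup\mathcal B_N$ is a Riesz basis for $\mathcal H$.

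The crux is the frequency perturbation. Once the frequencies are displaced from $\pi n$ the blocks are no longer orthogonal, so no blockwise estimate is available and one must exploit the Fourier-series mechanism responsible for the $1/4$-bound. Two features specific to the matrix setting must be absorbed simultaneously: within a single block a frequency $\mu_{n,s}$ may repeat (when some $\alpha_j$ has rank $>1$), which is harmless only because the attached vectors $w_{n,s}=v_{j,k}$ are mutually orthogonal; and the directions are close to an a priori unknown orthonormal basis $\{U_n\epsilon_s\}$, not to the fixed $\{\epsilon_s\}$. Both are precisely what the vector version of Kadec's theorem is built to accommodate, so the real work is arranging the two asymptotic estimates so that its hypotheses hold for $N$ large, rather than any routine computation.
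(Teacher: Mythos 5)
Your proposal is correct and follows essentially the same route as the paper: both arguments reduce the claim to the vector analogue of Kadec's theorem in Appendix~\ref{AppRiesz}, feeding it the two inputs you identify --- the $\o(1)$ displacement of the frequencies from $\pi n$ coming from (\ref{LambdaAsymp}), and the asymptotic orthonormality of the direction vectors obtained from $\sum_{\lambda_j\in\Delta_n}\alpha_j\to I$ via a polar/square-root factorization (the paper writes $B_n=\beta_n^{1/2}U_n$ where you write $W_n=U_nP_n$). Your repackaging as ``reference Riesz basis plus small perturbation'' versus the paper's direct verification of condition $(R_1)$ of Theorem~\ref{KadecTh} is only a cosmetic difference.
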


\begin{proof}
We prove the lemma by applying Theorem~\ref{KadecTh}.
As follows from the equality (\ref{partA2Eq}), for all $n\in\mathbb Z$, $|n|>N_0$, the system
$$
V_n:=\{v_{j,k}\ |\ \lambda_j\in\Delta_n,\ 1\le k\le\rank\alpha_j\}
$$
consists of exactly $r$ vectors. Moreover, since
\begin{equation}\label{B2AuxBetaN}
\|I-\beta_n\|=\o(1),\qquad |n|\to\infty,
\end{equation}
where
\begin{equation}\label{B2AuxBetaNdef}
\beta_n:=\sum_{\lambda_j\in\Delta_n}\alpha_j,
\end{equation}
we claim that $N_0$ can be taken so large that for all $n\in\mathbb Z$, $|n|>N_0$, the system $V_n$ forms a basis for the space $\mathbb C^r$.

For $n\in\mathbb Z$, $|n|>N_0$, we denote by $A_n$ the operator acting in $\mathbb C^r$ by the formula
$$
A_n v_{j,k}=\lambda_j v_{j,k},\qquad v_{j,k}\in V_n,\ \ \lambda_j\in\Delta_n,
$$
and set $B_n$ to be an operator transforming a standard orthonormal basis for $\mathbb C^r$ into the basis $V_n$.
It follows from the asymptotics (\ref{LambdaAsymp}) and (\ref{AlphaAsymp}) that there exists a natural $N_1>N_0$ such that
\begin{equation}\label{B2AuxAEst}
\|A_n-\pi nI\|<\ln 2,\qquad |n|>N_1.
\end{equation}
Moreover, let us show that
\begin{equation}\label{B2AuxBEst}
\sup\limits_{|n|>N_1} (\|B_n\|+\|{B_n}^{-1}\|)<\infty.
\end{equation}

Indeed, since
$$
\sum_{v_{j,k}\in V_n}(\ \cdot\ |v_{j,k})v_{j,k}=\beta_n,
$$
where $\beta_n$ is self-adjoint non-negative matrix given by (\ref{B2AuxBetaNdef}), we observe that for all $|n|>N_1$ the vectors ${\beta_n}^{-1/2}v_{j,k}$, where $v_{j,k}\in V_n$, form an orthonormal basis for $\mathbb C^r$. Therefore, $B_n={\beta_n}^{1/2}U_n$, where $U_n$ is a unitary matrix in $M_r$. Since, by virtue of (\ref{B2AuxBetaN}), we have
$$
\sup\limits_{|n|>N_1} (\|{\beta_n}^{1/2}\|+\|{\beta_n}^{-1/2}\|)<\infty,
$$
(\ref{B2AuxBEst}) follows.

Therefore, taking into account (\ref{B2AuxAEst}) and (\ref{B2AuxBEst}), we find from Theorem~\ref{KadecTh} that for all natural $N>N_1$ the system $\mathcal E_N\cup\mathcal B_N$ forms a Riesz basis for the space $\mathcal H$, as desired.
\end{proof}

Now we use Lemma~\ref{B2MainLemma} and Corollary~\ref{SumRankLemma} to prove Proposition~\ref{B2Lemma}:

\begin{proofB2}
Since the norming matrices $\alpha_j$, $j\in\mathbb Z$, are self-adjoint and non-negative, for every $j\in\mathbb Z$ there exists a system of pairwise orthogonal vectors $\{v_{j,k}\}_{k=1}^{\rank\alpha_j}$ in $\mathbb C^r$ such that (\ref{AlphaVrel}) holds true.

Observe that by virtue of Proposition~\ref{B1Lemma} and Corollary~\ref{SumRankLemma}, the conditions of Lemma~\ref{B2MainLemma} are satisfied. Hence, it follows from Lemma~\ref{B2MainLemma} that there exists $N_1\in\mathbb N$ such that for all natural $N>N_1$, the system $\mathcal E_N\cup\mathcal B_N$ (see (\ref{B2ENdef}) and (\ref{B2BNdef})) is a Riesz basis for the space $\mathcal H:=L_2((-1,1),\mathbb C^r)$. Therefore, Proposition~\ref{B2Lemma} will be proved if we show that the system
\begin{equation}\label{B2Cinfty}
\mathcal B_0:=\left\{\e^{\i\lambda_jt}v_{j,k}\ \big|\ j\in\mathbb Z,\ \ 1\le k\le\rank\alpha_j \right\}
\end{equation}
also forms a basis for $\mathcal H$. Indeed, since $\mathcal B_N\subset\mathcal B_0$ and
$$
\mathcal B_0\setminus\mathcal B_N=\left\{\e^{\i\lambda_jt}v_{j,k}\ \big|\ \lambda_j\in\Delta_n,\ |n|\le N,
\quad 1\le k\le\rank\alpha_j \right\},
$$
we then obtain that for all natural $N>N_1$, the finite systems $\mathcal B_0\setminus\mathcal B_N$ and $\mathcal E_N$ consist of the same number of elements. Obviously, this implies the claim of Proposition~\ref{B2Lemma}.

Thus let us prove that $\mathcal B_0$ is a basis for $\mathcal H$.
Indeed, since the operators
$$
P_{q,j}:=\Phi_q(\lambda_j)\alpha_j\Phi_q^*(\lambda_j),\qquad j\in\mathbb Z,
$$
form a system of pairwise orthogonal projectors in the space $\mathbb H$ (see Theorem~\ref{DirPropTh}) and
$$
\sum_{j=-\infty}^\infty P_{q,j}=\mathscr I,
$$
we find that the system
$$
\boldsymbol{\mathcal A}_q:=\{\Phi_q(\lambda_j)v_{j,k}\ |\ j\in\mathbb Z,\
\ 1\le k\le\rank\alpha_j\}
$$
is a basis for the space $\mathbb H$. Since $\Phi_q(\lambda)=(\mathscr I+\mathscr K_q)\Phi_0(\lambda)$ and the mapping $\mathscr I+\mathscr K_q$ is a homeomorphism of $\mathbb H$, we observe that the system
$$
\boldsymbol{\mathcal A}_0:=\{\Phi_0(\lambda_j)v_{j,k}\ |\ j\in\mathbb Z,\
\ 1\le k\le\rank\alpha_j\}
$$
remains a basis for $\mathbb H$.
Now introduce the unitary mapping $V:\mathcal H\to\mathbb H$ acting by the formula
$$
(Vf)(t)=\begin{pmatrix}f(-t),&f(t)\end{pmatrix}^\top,\qquad t\in(0,1),
$$
and note that $V$ maps the function $\e^{\i\lambda_jt}v_{j,k}$ to $\Phi_0(\lambda_j)v_{j,k}$ for all $j\in\mathbb Z$, $1\le k\le\rank\alpha_j$. Therefore, the system $\mathcal B_0$ is a basis for the space $\mathcal H$, as desired.
\end{proofB2}

In particular, from the proof of Proposition~\ref{B2Lemma} we also obtain the following:

\begin{corollary}\label{B3Lemma}
For an arbitrary potential $q\in\mathfrak Q_p$, $p\ge1$, the spectral data $\mathfrak a_q$ of the operator $T_q$ satisfy the condition $(B_3)$, i.e. the system
\begin{equation}\label{XiB4}
\mathcal X:=\{ \e^{\i\lambda_jt}d\ | \ j\in\mathbb{Z},\ d\in\mathrm{Ran}\ \alpha_j \}
\end{equation}
is complete in $L_2((-1,1),\mathbb C^r)$.
\end{corollary}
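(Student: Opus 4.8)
The plan is to obtain Corollary~\ref{B3Lemma} as an immediate consequence of the work already carried out in the proof of Proposition~\ref{B2Lemma}. There we established that the system $\mathcal B_0$ defined in (\ref{B2Cinfty}) is a basis for $\mathcal H:=L_2((-1,1),\mathbb C^r)$, hence in particular complete in $\mathcal H$. Since completeness of a family is a property of its closed linear span, it suffices to show that the system $\mathcal X$ from (\ref{XiB4}) has the same linear span as $\mathcal B_0$.

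The key observation is purely algebraic and local in $j$. For each $j\in\mathbb Z$ the decomposition (\ref{AlphaVrel}) of the non-negative matrix $\alpha_j$ over the pairwise orthogonal vectors $\{v_{j,k}\}_{k=1}^{\rank\alpha_j}$ shows that $\Ran\alpha_j=\lin\{v_{j,k}\mid 1\le k\le\rank\alpha_j\}$. Consequently, for a fixed $j$ the set $\{\e^{\i\lambda_jt}d\mid d\in\Ran\alpha_j\}$ is exactly the linear span of the finitely many functions $\e^{\i\lambda_jt}v_{j,k}$, $1\le k\le\rank\alpha_j$. Thus, passing from $d\in\Ran\alpha_j$ to the spanning vectors $v_{j,k}$ changes neither the span nor the completeness of the system.

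Taking the union over all $j\in\mathbb Z$, I would conclude that $\lin\mathcal X=\lin\mathcal B_0$, so the closed linear span of $\mathcal X$ coincides with that of $\mathcal B_0$, which is all of $\mathcal H$; therefore $\mathcal X$ is complete in $L_2((-1,1),\mathbb C^r)$, which is precisely the content of condition $(B_3)$. I expect no genuine obstacle in this argument: the substantive input—that $\mathcal B_0$ spans $\mathcal H$—has already been extracted from the Riesz-basis statement of Lemma~\ref{B2MainLemma} inside the proof of Proposition~\ref{B2Lemma}, and the only remaining point is the elementary identification of $\Ran\alpha_j$ with the span of the vectors appearing in the spectral decomposition of $\alpha_j$.
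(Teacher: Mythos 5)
Your proposal is correct and follows essentially the same route as the paper: both deduce completeness of $\mathcal X$ from the fact, established in the proof of Proposition~\ref{B2Lemma}, that $\mathcal B_0$ is a basis for $\mathcal H$, combined with the identification $\Ran\alpha_j=\lin\{v_{j,k}\}_{k=1}^{\rank\alpha_j}$ coming from (\ref{AlphaVrel}). You merely spell out the equality of linear spans that the paper leaves implicit in writing $\mathcal X^\bot=(\lin\mathcal B_0)^\bot=\{0\}$.
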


\begin{proof}
It follows from the proof of Proposition~\ref{B2Lemma} that the system $\mathcal B_0$ given by (\ref{B2Cinfty}) is a basis for the space $\mathcal H:=L_2((-1,1),\mathbb C^r)$. Therefore, we immediately obtain that $\mathcal X^\bot=(\lin\mathcal B_0)^\bot=\{0\}$, as desired. Here $\lin\mathcal B_0$ denotes the linear span of $\mathcal B_0$.
\end{proof}

\subsection{The Krein mapping. Proof of Theorem~\ref{Th3}}

Thus, it remains only to prove that the spectral data for the operators under consideration satisfy the condition $(B_4)$. In order to do this, we need to look into some properties of the Krein mapping $\Theta$ given by (\ref{KreinMap}). In particular, here we prove Theorem~\ref{Th3} claiming that for all $p\ge1$ the mapping $\Theta$ is a homeomorphism between the space $\mathfrak H_p$ of accelerants and the space $\mathfrak Q_p$ of potentials.

We start from the following remark:

\begin{remark}\label{ThetaContRem}
It obviously follows from the results of Appendix~\ref{AppFact} that for all $p\ge1$, the mapping $H\mapsto\Theta(H):=\i R_H(\cdot,0)$ acts continuously from $\mathfrak H_p$ to $\mathfrak Q_p$.
\end{remark}

Now, for an arbitrary accelerant $H\in\mathfrak H_p$ denote by $\mathscr F_H$ a self-adjoint integral operator in $\mathbb H$ with kernel
\begin{equation}\label{FH}
F_H(x,t):=\frac{1}{2}\begin{pmatrix}H\left(\frac{x-t}{2}\right)&
H\left(\frac{x+t}{2}\right)\\
H\left(-\frac{x+t}{2}\right)&
H\left(-\frac{x-t}{2}\right)\end{pmatrix},
\qquad 0\le x,t\le1.
\end{equation}
Then the following proposition explains a natural connection between an accelerant $H$ and the corresponding potential $q:=\Theta(H)$:

\begin{proposition}\label{DirectFactTh}
For an arbitrary accelerant $H\in\mathfrak H_p$,
\begin{itemize}
\item[(i)]there exists a unique function $L_H\in G_p^+(M_{2r})$ such that
\begin{equation}\label{FHfact}
\mathscr I+\mathscr F_H=(\mathscr I+\mathscr L_H)^{-1}(\mathscr I+{\mathscr L_H}^*)^{-1},
\end{equation}
where $\mathscr L_H\in\mathscr G_p^+(M_{2r})$ is an integral operator in $\mathbb H$ with kernel $L_H$;
\item[(ii)]$L_H=K_q$ for $q=\Theta(H)$.
\end{itemize}
\end{proposition}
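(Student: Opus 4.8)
The plan is to derive part (i) from the abstract factorization theory collected in Appendix~\ref{AppFact}, and then to identify the resulting kernel with the transformation kernel $K_q$ by reducing the factorization to the Krein equation that defines $\Theta(H)$. For part (i), I would first note that $\mathscr F_H$ is self-adjoint: the symmetry $H(-x)=H(x)^*$ together with the block form of $F_H$ in (\ref{FH}) gives $F_H(t,x)=F_H(x,t)^*$. Next I would transport $\mathscr I+\mathscr F_H$ to $\mathcal H=L_2((-1,1),\mathbb C^r)$ by the unitary $V$ used in the proof of Proposition~\ref{B2Lemma}: a direct computation shows that $V^*\mathscr F_H V$ is a convolution-type operator built (after the rescaling hidden in the arguments $\frac{x\pm t}{2}$ of $F_H$) from $H$, so that the compression of $\mathscr I+\mathscr F_H$ to each subspace $L_2((0,a),\mathbb C^r)\times L_2((0,a),\mathbb C^r)$ corresponds to a compression of a convolution operator to a centered interval. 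By translation invariance, the accelerant condition of Definition~\ref{Def2} makes each such compression invertible. Since the compressions shrink to $\mathscr I$ as $a\to0$ and none is singular, $\mathscr I+\mathscr F_H$ is positive definite, and the factorization theorem of Appendix~\ref{AppFact} yields a unique lower-triangular (Volterra) factor $(\mathscr I+\mathscr L_H)^{-1}$; the $L_p$-regularity of $H$ then propagates to give $L_H\in G_p^+(M_{2r})$.

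For part (ii), I would extract from (\ref{FHfact}) the Gelfand--Levitan identity $L_H(x,t)+F_H(x,t)+\int_0^x L_H(x,s)F_H(s,t)\,\d s=0$ on $\Omega$, obtained by equating kernels in $(\mathscr I+\mathscr L_H)(\mathscr I+\mathscr F_H)=(\mathscr I+\mathscr L_H^*)^{-1}$ and observing that the non-identity part of the left-hand side must be strictly upper-triangular, since $(\mathscr I+\mathscr L_H^*)^{-1}$ is. Feeding the block structure (\ref{FH}) into this identity and applying the same unitary $V$, I would show that the $2r$-dimensional equation collapses to the $r$-dimensional Krein equation (\ref{KreinEq}); consequently the blocks of $L_H$ are assembled from its solution $R_H$ exactly as the blocks of $F_H$ are assembled from $H$. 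This reduction simultaneously shows that $\mathscr I+\mathscr L_H$ solves the Goursat problem characterizing a transformation operator, so that it intertwines the free and perturbed systems, $(\mathscr I+\mathscr L_H)\Phi_0(\lambda)=\Phi_{\tilde q}(\lambda)$ for the potential $\tilde q$ read off its diagonal, and that $\tilde q=\i R_H(\cdot,0)=\Theta(H)$. Since by Lemma~\ref{SCLemma}(i) the transformation kernel $K_{\tilde q}$ is the unique element of $G_p^+(M_{2r})$ with this intertwining property, the identity $L_H=K_q$ with $q=\Theta(H)$ follows.

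I expect the main obstacle to be precisely this reduction in part (ii): matching the $2r\times2r$ Gelfand--Levitan equation for $L_H$ with the $r\times r$ Krein equation for $R_H$ and correctly reading off the potential from the diagonal of $L_H$. The bookkeeping of the four blocks of $F_H$ under the change of variables induced by $V$, together with keeping track of the rescaling $x\mapsto x/2$ in the arguments of $H$ in (\ref{FH}), is where the delicate computation lies; everything else reduces to citing the factorization theory of Appendix~\ref{AppFact} and the uniqueness in Lemma~\ref{SCLemma}.
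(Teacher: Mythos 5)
Part (i) of your proposal is essentially the paper's argument: the accelerant condition is exactly condition (A) of Theorem~\ref{FactTh2} for the convolution operator $\mathcal I+\mathscr H$, which is unitarily equivalent to $\mathscr I+\mathscr F_H$, so positivity and hence the (unique, self-adjointly symmetric) factorization follow from the appendix material; the paper compresses all of this into a citation of Lemma~\ref{FactLemma1} and Theorem~\ref{FactTh1}.

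For part (ii) you take a genuinely different route, and it has a gap. You propose to extract the Gel'fand--Levitan identity from the factorization, collapse it to the Krein equation, and then assert that ``this reduction simultaneously shows that $\mathscr I+\mathscr L_H$ solves the Goursat problem characterizing a transformation operator,'' so that it intertwines the free and perturbed systems with potential $\i R_H(\cdot,0)$. That last step is the entire content of the proposition and is not justified for $H\in\mathfrak H_p$ with $p\ge1$: verifying the intertwining property (equivalently, the Goursat problem) requires differentiating $x\mapsto\int_0^x L_H(x,s)\varphi_0(s,\lambda)\,\d s$, and the kernel $L_H$ is only of class $G_p^+$, not continuously differentiable. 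This regularity obstruction is precisely why the paper does \emph{not} argue directly. Instead it observes that $\mathfrak H_p$ is open in $L_p$, so the continuous accelerants $\mathfrak H_1\cap C([-1,1],M_r)$ are dense in $\mathfrak H_p$; for a continuous accelerant the identity $L_{H_n}=K_{q_n}$, $q_n=\Theta(H_n)$, is already available from \cite{dirac1} (where your direct computation is legitimate); and then the continuity of the three mappings $H\mapsto L_H$ (Theorem~\ref{FactTh2}/\ref{FactTh1}), $H\mapsto\Theta(H)$ (Remark~\ref{ThetaContRem}), and $q\mapsto K_q$ (Lemma~\ref{SCLemma}) allows passing to the limit $L_H=K_q$. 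If you want to keep your direct approach, you must either carry out the Goursat/intertwining verification in a weak (integrated) form valid for $G_p^+$ kernels, or insert the approximation argument; as written, the decisive step is asserted rather than proved.
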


\begin{proof}
Part {\it (i)} of the present proposition directly follows from Lemma~\ref{FactLemma1}.
To prove part {\it (ii)}, note that for all $p\ge1$ the set $\mathfrak H_p$ is open in $L_p((-1,1),M_r)$, and thus the set $\mathfrak H_1\cap C([-1,1],M_r)$ of continuous accelerants is dense everywhere in $\mathfrak H_p$. Therefore, there exists a sequence $(H_n)_{n\in\mathbb N}$, $H_n\in\mathfrak H_1\cap C([-1,1],M_r)$, such that
$$
\lim\limits_{n\to\infty}\|H-H_n\|_{\mathfrak H_p}=0.
$$
Then it follows from \cite{dirac1} that for all $n\in\mathbb N$,
\begin{equation}\label{LHKqEq1}
L_{H_n}=K_{q_n},\qquad q_n=\Theta(H_n).
\end{equation}
Now, by virtue of Theorem~\ref{FactTh2}, we find that the mapping $\mathfrak H_p\owns H\mapsto L_H\in G_p^+(M_{2r})$ is continuous. From the other side, it follows from Lemma~\ref{SCLemma} that so is the mapping $\mathfrak Q_p\owns q\mapsto K_q\in G_p^+(M_{2r})$. Furthermore, as follows from Remark~\ref{ThetaContRem}, the mapping $\Theta$ acts continuously from $\mathfrak H_p$ to $\mathfrak Q_p$. Hence, passing to the limit in (\ref{LHKqEq1}) we obtain that
$$
L_{H}=K_{q},\qquad q=\Theta(H),
$$
as desired.
\end{proof}

Now we are ready to prove Theorem~\ref{Th3}. The proof also uses the results of \cite{5auth} and \cite{dirac1}:

\begin{proofTh3}
As follows from Remark~\ref{ThetaContRem}, the mapping $\Theta$ acts continuously from $\mathfrak H_p$ to $\mathfrak Q_p$ for all $p\ge1$. Therefore, it remains to prove that $\Theta$ is invertible and that the inverse mapping $\Theta^{-1}$ acts continuously from $\mathfrak Q_p$ to $\mathfrak H_p$.

Denote by $\mathfrak H_0$ the set of all accelerants that are continuous on $[-1,1]\setminus\{0\}$ having a jump discontinuity at the origin, and set
$$
\Theta_0:=\Theta|_{\mathfrak H_0}.
$$
Then it is proved in \cite{5auth} that $\Theta_0$ maps $\mathfrak H_0$ onto $C([0,1],M_r)$ one-to-one. Since $\mathfrak H_0$ is dense in $\mathfrak H_p$ and $C([0,1],M_r)$ is dense in $\mathfrak Q_p$ for all $p\ge1$, the present theorem will be proved if we show that ${\Theta_0}^{-1}$ can be extended to a continuous mapping from $\mathfrak Q_p$ to $\mathfrak H_p$.

Recalling the results of \cite{dirac1}, we find that the mapping ${\Theta_0}^{-1}$ can be represented in the following way. For an arbitrary $q\in C([0,1],M_r)$ denote by $\mathscr K_q$ an integral operator in $\mathbb H$ with kernel $K_q$ (see Lemma~\ref{SCLemma}) and set
\begin{equation}\label{FIntDef}
\mathscr F^q:=(\mathscr I+\mathscr K_q)^{-1}
(\mathscr I+{\mathscr K_q}^*)^{-1}-\mathscr I.
\end{equation}
Let $F^q$ be the kernel of $\mathscr F^q$.
Since the mappings $\mathfrak Q_p\owns q\mapsto K_q \in G_p^+(M_{2r})$ and $\mathscr G_p^+(M_{2r})\owns\mathscr K\mapsto(\mathscr I+\mathscr K)^{-1}-\mathscr I\in\mathscr G_p^+(M_{2r})$ are continuous (see Lemmas \ref{SCLemma} and \ref{KinvLemma}, respectively), it follows that the mapping $\mathfrak Q_p\owns q\mapsto F^q \in G_p(M_{2r})$ is continuous as well.

As follows from Proposition~\ref{DirectFactTh}, if $q=\Theta(H)$ for some $H\in\mathfrak H_p$, then $F^q=F_H$. By virtue of the formula (\ref{FH}) for $F_H$, the inverse of the mapping $\mathfrak H_p\owns H\mapsto F_H\in G_p(M_{2r})$ is a restriction of certain mapping $\eta:G_p(M_{2r})\to L_p((-1,1),M_r)$ that can be easily written: write $F\in G_p(M_{2r})$ in the block-diagonal form
$$
F=\frac12\begin{pmatrix}F_{11}&F_{12}\\
F_{21}&F_{22}\end{pmatrix},
$$
where $F_{ks}\in G_p(M_r)$, $k,s\in\{1,2\}$, and set
\begin{equation}\label{AFdef}
[\eta(F)](x):=\left\{\begin{array}{cl}F_{21}(-2x-1,1),&-1\le x\le-\frac12,\\
F_{11}(2x+1,1),&-\frac12<x\le0,\\
F_{22}(-2x+1,1),&0<x\le\frac12,\\
F_{12}(2x-1,1),&\frac12<x\le1.\end{array}\right.
\end{equation}

Now define the mapping $\Upsilon$ acting from $C([0,1],M_r)$ to $L_p((-1,1),M_r)$ by the formula
$$
\Upsilon(q):=\eta(F^q),\qquad q\in C([0,1],M_r).
$$
Since the mappings $$\mathfrak Q_p\owns q\mapsto F^q \in G_p(M_{2r})$$ and $$\eta:G_p(M_{2r})\to L_p((-1,1),M_r)$$ are continuous, we obtain that $\Upsilon$ acts continuously from $\mathfrak Q_p$ to $L_p((-1,1),M_r)$.

It follows from \cite{dirac1} that
$$
\Upsilon(q)={\Theta_0}^{-1}(q)
$$
for all $q\in C([0,1],M_r)$. Therefore, we find that the mapping ${\Theta_0}^{-1}$ can be extended to a continuous one from $\mathfrak Q_p$ to $\mathfrak H_p$, as desired.
\end{proofTh3}

Theorem~\ref{Th3} will find its application in the next subsection, while Proposition~\ref{DirectFactTh} will also play an important role in solving the inverse spectral problem.

\subsection{Condition $(B_4)$}

The main result of this subsection is the following theorem:

\begin{theorem}\label{B3Th}
Let $q\in\mathfrak Q_p$, $p\ge1$, set $\mu:=\mu_q$ and $H:=H_\mu$ (see the definitions (\ref{SpectrMeasDef}) and (\ref{AccFirstDef}), respectively). Then $H\in\mathfrak H_p$ and $q=\Theta(H)$.
\end{theorem}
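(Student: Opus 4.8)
The plan is to collapse the whole statement to a single kernel identity. Recall from the proof of Theorem~\ref{Th3} that the inverse Krein mapping is realized as $\Theta^{-1}(q)=\Upsilon(q)=\eta(F^q)$ for every $q\in\mathfrak Q_p$, where $F^q\in G_p(M_{2r})$ is the kernel of the operator $\mathscr F^q$ from (\ref{FIntDef}) and $\eta$ is the extraction map (\ref{AFdef}). Consequently, everything follows the moment one proves
$$
H_\mu=\eta(F^q),
$$
for then $H_\mu=\Theta^{-1}(q)$ lies in $\mathfrak H_p$ (which, in particular, is condition $(B_4)$, the regularity $H_\mu\in L_p$), and applying $\Theta$ gives $q=\Theta(H_\mu)$. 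So the entire theorem rests on identifying the distributional kernel of $\mathscr F^q$ with the block matrix (\ref{FH}) built from $H_\mu$.

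To produce that kernel I would combine the resolution of identity (\ref{ResIdent}) with the projector formula (\ref{ProjForm}) and the transformation relation (\ref{PhiTransfOp}). Substituting $\Phi_q(\lambda_j)=(\mathscr I+\mathscr K_q)\Phi_0(\lambda_j)$ into $\sum_j\Phi_q(\lambda_j)\alpha_j\Phi_q^*(\lambda_j)=\mathscr I$ and pulling the bounded factors $\mathscr I+\mathscr K_q$ out of the strongly convergent sum yields
$$
\sum_{j}\Phi_0(\lambda_j)\alpha_j\Phi_0^*(\lambda_j)=(\mathscr I+\mathscr K_q)^{-1}(\mathscr I+{\mathscr K_q}^*)^{-1}=\mathscr I+\mathscr F^q.
$$
The same computation in the free case $q=0$ (where $\mathscr K_0=0$, $\lambda_j=\pi n$, $\alpha_j=I$) gives $\sum_n\Phi_0(\pi n)\Phi_0^*(\pi n)=\mathscr I$, so that $\mathscr F^q=\sum_j\Phi_0(\lambda_j)\alpha_j\Phi_0^*(\lambda_j)-\sum_n\Phi_0(\pi n)\Phi_0^*(\pi n)$.

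Next I would compute the kernel of a single term $\Phi_0(\lambda)\alpha\Phi_0^*(\lambda)$ from (\ref{Phi0Def}); a direct calculation produces the $2r\times2r$ kernel
$$
\frac12\begin{pmatrix}\e^{\i\lambda(x-t)}\alpha & \e^{\i\lambda(x+t)}\alpha\\ \e^{-\i\lambda(x+t)}\alpha & \e^{-\i\lambda(x-t)}\alpha\end{pmatrix}.
$$
Summing over the spectrum and subtracting the free terms, the $(1,1)$ block of $\mathscr F^q$ becomes
$$
\frac12\Big(\sum_j\e^{\i\lambda_j(x-t)}\alpha_j-\sum_n\e^{\i\pi n(x-t)}I\Big)=\frac12(\widehat\mu-\widehat\mu_0)\Big(\tfrac{x-t}{2}\Big),
$$
by Definition~\ref{Def1}, and analogously for the other three blocks with arguments $\pm\tfrac{x\pm t}{2}$. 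Since $x,t\in[0,1]$ keep all these arguments inside $(-1,1)$, the blocks reproduce exactly the formula (\ref{FH}) with $H=H_\mu$. As $\mathscr F^q$ carries the genuine kernel $F^q\in G_p(M_{2r})$, testing this equality against Schwartz functions forces the a priori distributional $H_\mu$ to be regular, with $F^q=F_{H_\mu}$ and hence $H_\mu=\eta(F^q)$, which closes the argument.

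The main obstacle is this last identification. The delicate issue is that the series (\ref{Hseries}) for $H_\mu$ need not converge in $L_p$ for $p\ge1$ — precisely the difficulty flagged in the introduction and the very reason the distribution $\widehat\mu$ is used — so the kernel equality must be read in the distributional sense of Definition~\ref{Def1}; one must justify interchanging the strong operator limit with the passage to kernels and then invoke the a priori regularity of $F^q$ to upgrade the distributional statement to an $L_p$ identity. Some care is needed with $\widehat\mu_0$, whose Fourier transform is a Dirac comb supported on the integers, but its singular support meets $[-1,1]$ only at the endpoints $\pm1$ and therefore does not affect the open interval on which $H_\mu$ is defined.
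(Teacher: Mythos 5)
Your reduction of the theorem to the single identity $H_\mu=\eta(F^q)$ is legitimate (the proof of Theorem~\ref{Th3} does establish $\Theta^{-1}=\Upsilon=\eta(F^{(\cdot)})$ on all of $\mathfrak Q_p$), and the strong-limit formula $\mathscr I+\mathscr F^q=\slim_N\sum_{|j|\le N}\Phi_0(\lambda_j)\alpha_j\Phi_0^*(\lambda_j)$ together with the block kernels of $\Phi_0(\lambda)\alpha\Phi_0^*(\lambda)$ is the right raw material. But the step you yourself flag as ``the main obstacle'' is precisely the content of the theorem in the $L_p$ setting, and you do not carry it out. Concretely: strong operator convergence only gives you, for product test functions $f\otimes g$, convergence of the pairings $\sum_{|j|\le N}(\alpha_j\Phi_0^*(\lambda_j)f\,|\,\Phi_0^*(\lambda_j)g)$; to conclude $F^q=F_{H_\mu}$ you must (a) split each pairing into the four phase contributions and recognize each as $(\widehat\mu-\widehat\mu_0,\psi)$ for a test function $\psi$ obtained by an affine change of variables from $f\otimes g$, (b) check that the resulting class of $\psi$ is total on each of the relevant subintervals so that the four blocks together determine $H_\mu$ on all of $(-1,1)$, and (c) argue that equality with the pullback of the genuine $G_p$-function $F^q$ under the submersions $(x,t)\mapsto\pm\frac{x\pm t}{2}$ forces the one-variable distribution $H_\mu$ to be regular and to lie in $L_p((-1,1),M_r)$. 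None of (a)--(c) is routine bookkeeping, and note that Lemma~\ref{FHeqLemma} goes in the opposite direction only (it \emph{assumes} $H_\mu\in L_p$), so you cannot cite it here. There is also a factual slip: $\widehat\mu_0$ is the Dirac comb $\sum_{k\in\mathbb Z}\delta_k I$, whose singular support meets $(-1,1)$ at the \emph{interior} point $0$, not merely at $\pm1$; the delta at the origin is cancelled only because you subtract it from the matching singular part of $\widehat\mu$, which is exactly what makes the regularity of $\widehat\mu-\widehat\mu_0$ nontrivial rather than automatic.

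For comparison, the paper avoids this identification entirely: it approximates $q$ by continuous potentials $q_n$, invokes the known $L_2$ result $H_{\mu_{q_n}}=\Theta^{-1}(q_n)$ from \cite{dirac1}, proves weak convergence $(H_{\mu_{q_n}},f)\to(H_{\mu_q},f)$ via contour integrals of the Weyl functions (Lemmas~\ref{MLimLemma} and \ref{WeakConvProp}), and then uses the continuity of $\Theta^{-1}$ from Theorem~\ref{Th3} to identify the two limits. Your route, if completed, would be more self-contained (it would not need Lemmas~\ref{MLimLemma}--\ref{WeakConvProp}), but as written the decisive analytic step is announced rather than proved.
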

In particular, from Theorem~\ref{B3Th} we immediately obtain the following:

\begin{corollary}\label{B4Lemma}
For an arbitrary potential $q\in\mathfrak Q_p$, $p\ge1$, the spectral data $\mathfrak a_q$ of the operator $T_q$ satisfy the condition $(B_4)$, i.e. $H_\mu\in L_p((-1,1),M_r)$, $\mu:=\mu_q$.
\end{corollary}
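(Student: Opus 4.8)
The plan is to prove Theorem~\ref{B3Th}; Corollary~\ref{B4Lemma} (condition $(B_4)$) then follows at once, since the $L_p$-membership of $H_\mu$ is produced along the way. The cornerstone is the resolution of identity of Theorem~\ref{DirPropTh}. Writing $P_{q,j}=\Phi_q(\lambda_j)\alpha_j\Phi_q^*(\lambda_j)$ and substituting $\Phi_q(\lambda)=(\mathscr I+\mathscr K_q)\Phi_0(\lambda)$ from~(\ref{PhiTransfOp}), the identity $\sum_j P_{q,j}=\mathscr I$ rearranges to
$$
(\mathscr I+\mathscr K_q)\Big(\sum_{j}\Phi_0(\lambda_j)\alpha_j\Phi_0^*(\lambda_j)\Big)(\mathscr I+\mathscr K_q^*)=\mathscr I .
$$
Since $\mathscr I+\mathscr K_q$ is a homeomorphism, this gives $\sum_j\Phi_0(\lambda_j)\alpha_j\Phi_0^*(\lambda_j)=(\mathscr I+\mathscr K_q)^{-1}(\mathscr I+\mathscr K_q^*)^{-1}=\mathscr I+\mathscr F^q$, with $\mathscr F^q$ the operator~(\ref{FIntDef}). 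Applying the same computation to the free operator ($q=0$, $\mu_0=\sum_n I\delta_{\pi n}$, $K_0=0$) yields $\mathscr I=\sum_n\Phi_0(\pi n)\Phi_0^*(\pi n)$, whence
$$
\mathscr F^q=\sum_{j}\Phi_0(\lambda_j)\alpha_j\Phi_0^*(\lambda_j)-\sum_{n}\Phi_0(\pi n)\Phi_0^*(\pi n) .
$$
Thus $\mathscr F^q$ is precisely the $\Phi_0$-transform of the signed measure $\mu_q-\mu_0$, and from the proof of Theorem~\ref{Th3} we already know its kernel $F^q$ lies in $G_p(M_{2r})$.

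Next I would identify $F^q$ with the block kernel $F_H$ of~(\ref{FH}) for $H=H_\mu$. A direct computation of the kernel of $\Phi_0(\lambda)\alpha\Phi_0^*(\lambda)$ gives, for real $\lambda$ and self-adjoint $\alpha$, a $2\times2$ block operator whose four blocks carry the factors $\tfrac12\,\e^{\pm\i\lambda(x\pm t)}\alpha$, matching the four slots of~(\ref{FH}) at the arguments $\pm\tfrac{x\pm t}{2}$; summing over $j$ and subtracting the free part reproduces each block of the formal series~(\ref{Hseries}). To make this rigorous I would pair $\mathscr F^q$ against $Vf$ for $f\in\mathcal S^r$ with $\supp f\subset[-1,1]$, where $V:\mathcal H\to\mathbb H$ is the unitary from the proof of Proposition~\ref{B2Lemma}, and read off from Definition~\ref{Def1} that the result equals $(\widehat\mu-\widehat\mu_0,f)=(H_\mu,f)$. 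Equivalently, the extraction map $\eta$ of~(\ref{AFdef}) gives $\eta(F^q)=H_\mu$; since $F^q\in G_p(M_{2r})$ and $\eta$ maps into $L_p((-1,1),M_r)$, this already yields $H_\mu\in L_p$, which is Corollary~\ref{B4Lemma}, while $H_\mu(-x)=H_\mu(x)^*$ comes from $\mathscr F^{q*}=\mathscr F^q$. In particular $F^q=F_{H_\mu}$, so $\mathscr F_{H_\mu}=\mathscr F^q$.

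It then remains to check $H_\mu\in\mathfrak H_p$ and $\Theta(H_\mu)=q$. Positivity is automatic: $\mathscr I+\mathscr F_{H_\mu}=\mathscr I+\mathscr F^q=[(\mathscr I+\mathscr K_q^*)(\mathscr I+\mathscr K_q)]^{-1}$ is positive definite and carries the lower-triangular factorisation~(\ref{FHfact}) with factor $\mathscr K_q\in\mathscr G_p^+(M_{2r})$, so by the uniqueness in Proposition~\ref{DirectFactTh}(i) the Krein equation~(\ref{KreinEq}) is solvable and $H_\mu\in\mathfrak H_p$ with $L_{H_\mu}=K_q$. On the other hand Proposition~\ref{DirectFactTh}(ii) gives $L_{H_\mu}=K_{\Theta(H_\mu)}$, hence $K_{\Theta(H_\mu)}=K_q$; since $q\mapsto K_q$ is one-to-one on $\mathfrak Q_p$ (a consequence of Theorem~\ref{Th3} together with Proposition~\ref{DirectFactTh}), we conclude $\Theta(H_\mu)=q$, completing the proof.

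The genuinely delicate step is the kernel identification $F^q=F_{H_\mu}$ of the second paragraph. The series $\sum_j\Phi_0(\lambda_j)\alpha_j\Phi_0^*(\lambda_j)$ converges only strongly, and, unlike the case $p=2$ of~\cite{dirac1}, we have at our disposal only the rough asymptotics~(\ref{LambdaAsymp}) and~(\ref{AlphaAsymp}), not the precise ones, so the blocks cannot be summed termwise in $L_p$ to recover~(\ref{Hseries}) directly. The regularisation by the free measure $\mu_0$, built into the distributional definition of $H_\mu$ as $\widehat\mu-\widehat\mu_0$, is exactly what tames the divergence, and the identification must be carried out distributionally, leaning on the already-established fact that $\mathscr F^q$ has an $L_p$ kernel to upgrade the weak equality $\eta(F^q)=H_\mu$ to an identity in $L_p$. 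Verifying that the four blocks assemble into the single function $H_\mu$ with the correct arguments $\pm\tfrac{x\pm t}{2}$, consistently with~(\ref{AFdef}), is the bookkeeping that closes the scheme.
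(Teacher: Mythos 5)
Your route is genuinely different from the paper's and, in outline, viable. The paper obtains Corollary~\ref{B4Lemma} as an immediate consequence of Theorem~\ref{B3Th}, which it proves by \emph{approximation}: it takes continuous $q_n\to q$ in $\mathfrak Q_p$, invokes the identity $H_{\mu_{q_n}}=\Theta^{-1}(q_n)$ already known from \cite{dirac1} for continuous potentials, proves the weak convergence $(H_{\mu_{q_n}},f)\to(H_{\mu_q},f)$ by contour integration of the Weyl functions (Lemmas~\ref{MLimLemma} and~\ref{WeakConvProp}), and closes the loop with the continuity of $\Theta^{-1}$ from Theorem~\ref{Th3}. You instead work directly with the given $q$: the resolution of identity converts $\sum_j\Phi_0(\lambda_j)\alpha_j\Phi_0^*(\lambda_j)$ into $\mathscr I+\mathscr F^q$ with $F^q\in G_p(M_{2r})$, and the regularity of $H_\mu$ is to be read off as the Toeplitz profile $\eta(F^q)$ of an $L_p$-kernel. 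This is essentially the paper's Lemma~\ref{FHeqLemma} and the factorization step of Theorem~\ref{Th2} run in reverse, and it has the merit of explaining \emph{why} $H_\mu$ is regular. Note, however, that you do not fully escape the approximation machinery: Proposition~\ref{DirectFactTh}(ii) and Theorem~\ref{Th3}, which you invoke at the end to get $\Theta(H_\mu)=q$, are themselves proved in the paper by approximating by continuous accelerants and potentials, so the limiting argument is relocated rather than removed.

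The one step you must not leave as ``bookkeeping'' is the identification $\eta(F^q)=H_\mu$, and the recipe you give for it does not typecheck: applying $\mathscr F^q$ to a single test function $Vf$ produces an element of $\mathbb H$, whereas $(H_\mu,f)$ is a vector in $\mathbb C^r$, so nothing can be ``read off'' from that pairing alone. What works is the bilinear version: for $f=c\phi$, $g=d\psi$ with $c,d\in\mathbb C^r$ and scalar $\phi,\psi$ supported in $(0,1)$, strong convergence gives
$$
\langle(\mathscr I+\mathscr F^q)Vf,Vg\rangle=\sum_{j}d^*\alpha_jc\ \widehat k(\lambda_j),
\qquad k(u):=\int\overline{\psi(t+u)}\,\phi(t)\ \d t,
$$
with $k$ supported in $(-1,1)$; subtracting the free case identifies $\iint d^*F^q(x,t)c\,(Vf)(t)\overline{(Vg)(x)}\,\d t\,\d x$ with the scalar distribution $(d^*H_\mu c,k)$. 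To conclude one still needs (a) that such correlations $k$ are dense among test functions on $(-1,1)$ (approximate identity in $\psi$ plus a partition of unity, since a single correlation only reaches intervals of length $<2$ built from translates), and (b) the continuity of $x\mapsto F^q(x,\cdot)$ in $L_p$ — i.e.\ exactly the $G_p$ structure — to pass to the limit and deduce simultaneously that $F^q$ has the convolution form (\ref{FH}) and that its profile coincides with the a priori distribution $H_\mu$. With that argument supplied your proof closes; without it, the crux of condition $(B_4)$ has not actually been established.
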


We prove Theorem~\ref{B3Th} by a limiting procedure using the results of \cite{dirac1}. Before starting the proof, we need to establish two auxiliary technical lemmas.

\begin{lemma}\label{MLimLemma}
Let $q\in\mathfrak Q_p$, $p\ge1$. Assume that for all $n\in\mathbb N$, $q_n\in C([0,1], M_r)$ and $\lim_{n\to\infty}\|q-q_n\|_{\mathfrak Q_p}=0$. Then there exist $N_0\in\mathbb N$ and $n_0\in\mathbb N$ such that for any natural $N>N_0$ and $n>n_0$ the functions $m_q$ and $m_{q_n}$ have no poles on the circle $K_N:=\{\lambda\in\mathbb C\ | \ |\lambda|=\pi N+\pi/6\}$.
Moreover, for all natural $N>N_0$,
\begin{equation}\label{MLimLemmaEq}
\lim\limits_{n\to\infty} \ \sup\limits_{\lambda\in K_N} \|m_q(\lambda)-m_{q_n}(\lambda)\|=0.
\end{equation}
\end{lemma}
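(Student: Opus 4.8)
The plan is to avoid the (here unavailable) precise zero asymptotics and instead work directly with the factorization $m_q=-s_q(\lambda)^{-1}c_q(\lambda)$ from (\ref{WeylFdef}). Since $c_q$ is entire, once I show that $s_q(\lambda)$ is boundedly invertible on $K_N$ it follows at once that $m_q$ has no poles on $K_N$; and the convergence (\ref{MLimLemmaEq}) will reduce, via a Neumann-series estimate, to the uniform convergence on $K_N$ of $s_{q_n}\to s_q$ and $c_{q_n}\to c_q$. I therefore write $s_q(\lambda)=(\sin\lambda)I+r_q(\lambda)$, where $r_q(\lambda):=\int_{-1}^1\e^{\i\lambda t}g_q(t)\,\d t$ according to (\ref{Srepr}), and similarly for each $q_n$.

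Two estimates drive the argument. The first is geometric: there exist $c_0>0$ and $N_0'\in\mathbb N$ such that $|\sin\lambda|\ge c_0\e^{|\Im\lambda|}$ for every $\lambda\in K_N$ with $N>N_0'$. This is exactly where the radius $\pi N+\pi/6$ is used. Writing $|\sin\lambda|^2=\sin^2(\Re\lambda)+\sinh^2(\Im\lambda)$ and splitting $K_N$ into the arcs $|\Im\lambda|\le1$ and $|\Im\lambda|>1$: on the first arc $\Re\lambda$ lies within $\Or(1/N)$ of $\pm(\pi N+\pi/6)$, so $\operatorname{dist}(\Re\lambda,\pi\mathbb Z)\to\pi/6$ and $|\sin\lambda|\ge|\sin(\Re\lambda)|$ is bounded below while $\e^{|\Im\lambda|}\le\e$; on the second arc $|\sin\lambda|\ge\sinh|\Im\lambda|\ge\tfrac14\e^{|\Im\lambda|}$. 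The second estimate is the Riemann--Lebesgue decay already used in the proof of Lemma~\ref{SCLemma}(iii), namely $\e^{-|\Im\lambda|}\|r_q(\lambda)\|\to0$ as $|\lambda|\to\infty$; since every point of $K_N$ has modulus $\pi N+\pi/6$, this gives $\delta_N:=\sup_{\lambda\in K_N}\e^{-|\Im\lambda|}\|r_q(\lambda)\|\to0$ as $N\to\infty$. Choosing $N_0\ge N_0'$ with $\delta_N<c_0/4$ for $N>N_0$ and combining the two estimates yields $\|(\sin\lambda)^{-1}r_q(\lambda)\|\le\delta_N/c_0<1/2$ on $K_N$, so $s_q(\lambda)=(\sin\lambda)\bigl(I+(\sin\lambda)^{-1}r_q(\lambda)\bigr)$ is invertible there with $\|s_q(\lambda)^{-1}\|\le 2/|\sin\lambda|\le(2/c_0)\e^{-|\Im\lambda|}$; in particular $m_q$ has no poles on $K_N$ for $N>N_0$.

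To obtain the threshold $n_0$ uniformly in $N$, the key point is that the convergence $r_{q_n}\to r_q$ is uniform in $\lambda$ after normalization: because $|\e^{\i\lambda t}|\le\e^{|\Im\lambda|}$ for $|t|\le1$ and $q\mapsto g_q$ is continuous by Lemma~\ref{SCLemma}(ii),
\[
\e^{-|\Im\lambda|}\|r_{q_n}(\lambda)-r_q(\lambda)\|\le\int_{-1}^1\|g_{q_n}(t)-g_q(t)\|\,\d t\le C\,\|g_{q_n}-g_q\|_{L_p((-1,1),M_r)}\longrightarrow0
\]
as $n\to\infty$, uniformly in $\lambda\in\mathbb C$. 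Hence there is $n_0$, independent of $N$, with this quantity $<c_0/4$ for $n>n_0$, so that $\sup_{\lambda\in K_N}\e^{-|\Im\lambda|}\|r_{q_n}(\lambda)\|<c_0/2$ for all $N>N_0$ and $n>n_0$; the same Neumann-series step shows $s_{q_n}(\lambda)$ invertible on $K_N$ with $\|s_{q_n}(\lambda)^{-1}\|\le 2/c_0$, whence $m_{q_n}$ has no poles on $K_N$. This establishes the first assertion with the required uniform $N_0,n_0$. For (\ref{MLimLemmaEq}) I then fix $N>N_0$ and let $n\to\infty$: on the compact circle $K_N$ the factor $\e^{|\Im\lambda|}$ is bounded, so (\ref{Srepr}), (\ref{Crepr}) and the continuity of $q\mapsto g_q,h_q$ give $s_{q_n}\to s_q$ and $c_{q_n}\to c_q$ uniformly on $K_N$, with $c_q$ bounded there; inserting this into
\[
m_{q_n}-m_q=-s_{q_n}^{-1}(c_{q_n}-c_q)-s_{q_n}^{-1}(s_q-s_{q_n})s_q^{-1}c_q
\]
together with the uniform bounds $\|s_{q_n}^{-1}\|,\|s_q^{-1}\|\le 2/c_0$ on $K_N$ yields (\ref{MLimLemmaEq}).

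I expect the geometric lower bound $|\sin\lambda|\ge c_0\e^{|\Im\lambda|}$ on $K_N$ to be the main technical point, as it is what legitimizes the perturbative inversion of $s_q$ on the whole circle and not merely away from the real axis, where the asymptotics (\ref{mAsymp}) of $m_q$ already apply. The one other idea that is genuinely needed is that the $L_p$-continuity $q\mapsto g_q$ makes the normalized convergence $\e^{-|\Im\lambda|}\|r_{q_n}-r_q\|\to0$ uniform in $\lambda$; this is precisely what renders $n_0$ independent of $N$.
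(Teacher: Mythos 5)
Your proof is correct and follows essentially the same route as the paper's: a lower bound $|\sin\lambda|\ge c_0\e^{|\Im\lambda|}$ on $K_N$ combined with the refined Riemann--Lebesgue lemma to invert $s_q$ and $s_{q_n}$ by a Neumann series (with $n_0$ uniform in $N$ via the $L_1$-bound $\e^{-|\Im\lambda|}\|\mathcal F(\lambda)(g_{q_n}-g_q)\|\le\|g_{q_n}-g_q\|_{L_1}$), followed by the resolvent identity for $m_{q_n}-m_q$. The only cosmetic difference is how the sine bound is obtained: you split $K_N$ into arcs near and away from the real axis, whereas the paper gets $|\sin\lambda|\ge|\sin|\lambda||=\tfrac12$ directly from the infinite-product expansion.
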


\begin{proof}
It follows from Lemma~\ref{SCLemma} that
\begin{equation}\label{SSnformulas}
s_q(\lambda)=(\sin\lambda)I+\mathcal F(\lambda)g_q,\qquad
s_{q_n}(\lambda)=(\sin\lambda)I+\mathcal F(\lambda)g_{q_n},
\end{equation}
where $g_q,g_{q_n}\in L_p((-1,1),M_r)$ and $\mathcal F(\lambda)$ is the operator $L_p((-1,1),M_r)\to M_r$ acting by the formula
$\mathcal F(\lambda)f:=\int_{-1}^1 \e^{\i\lambda t}f(t)\ \d t$. Moreover, since $q_n\to q$ in the metric of the space $\mathfrak Q_p$, from Lemma~\ref{SCLemma} we also obtain that $\lim_{n\to\infty}\|g_{q_n}-g_q\|_{L_1}=0$.

Similarly,
$$
c_q(\lambda)=(\cos\lambda)I+\mathcal F(\lambda)h_q,\qquad
c_{q_n}(\lambda)=(\cos\lambda)I+\mathcal F(\lambda)h_{q_n},
$$
where $h_q,h_{q_n}\in L_p((-1,1),M_r)$ and $\lim_{n\to\infty}\|h_{q_n}-h_q\|_{L_1}=0$.

Since $\|g_{q_n}-g_q\|_{L_1}\to0$ and $\|h_{q_n}-h_q\|_{L_1}\to0$ as $n\to\infty$, we find that for every $N\in\mathbb N$,
\begin{equation}\label{SSnCCn}
\lim\limits_{n\to\infty}\ \sup\limits_{\lambda\in K_N}\|s_q(\lambda)-s_{q_n}(\lambda)\|=0,\qquad
\lim\limits_{n\to\infty}\ \sup\limits_{\lambda\in K_N}\|c_q(\lambda)-c_{q_n}(\lambda)\|=0.
\end{equation}
Now let us establish some estimates for $\|s_q(\lambda)^{-1}\|$ and $\|s_{q_n}(\lambda)^{-1}\|$ as $\lambda\in K_N$.

By virtue of Lemma~\ref{RefRiemannLebesgue}, we obtain that there exists $N_1\in\mathbb N$ such that for all natural $N>N_1$,
\begin{equation}\label{RiemannLebesgueInequality}
\e^{-|\Im\lambda|}\|\mathcal F(\lambda)g_q\|<\frac18,\qquad \lambda\in K_N.
\end{equation}
In the meantime, using the expansion of $\sin\lambda$ as an infinite product we note that $|\sin\lambda|\ge|\sin|\lambda||$, $\lambda\in\mathbb C$, and thus $|\sin\lambda|\ge\frac12$ as $\lambda\in K_N$. Moreover, as $|\Im\lambda|>\ln2$ we arrive at the estimate
$$
|\sin\lambda|=\frac12|\e^{\i\lambda}-\e^{-\i\lambda}|\ge\frac14\ \e^{|\Im\lambda|},
$$
while as $|\Im\lambda|\le \ln2$, $\lambda\in K_N$, we have
$$
|\sin\lambda|\ge\frac12\ge\frac14\ \e^{|\Im\lambda|}.
$$
Thus, $|\sin\lambda|\ge\frac14\ \e^{|\Im\lambda|}$ as $\lambda\in K_N$. Using this estimate together with (\ref{RiemannLebesgueInequality}), we obtain from (\ref{SSnformulas}) that
\begin{equation}\label{SInvEst}
\|s_q(\lambda)^{-1}\| \le \frac{|\sin\lambda|^{-1}}{1-|\sin\lambda|^{-1}\cdot\|\mathcal F(\lambda)g_q\|}\le4,\qquad \lambda\in K_N,\ N>N_1.
\end{equation}

Similarly, since $\|g_{q_n}-g_q\|_{L_1}\to0$ as $n\to\infty$ and $\|\mathcal F(\lambda)(g_{q_n}-g_q)\|\le \e^{|\Im\lambda|}\|g_{q_n}-g_q\|_{L_1}$, we obtain that there exist $n_0\in\mathbb N$ and $N_2\in\mathbb N$ such that for all natural $n>n_0$ and $N>N_2$,
$$
\e^{-|\Im\lambda|}\|\mathcal F(\lambda)g_{q_n}\| \le \e^{-|\Im\lambda|}\|\mathcal F(\lambda)g_q\|+\e^{-|\Im\lambda|}\|\mathcal F(\lambda)(g_q-g_{q_n})\|<\frac18
$$
as $\lambda\in K_N$. Therefore, for all natural $n>n_0$,
\begin{equation}\label{SnInvEst}
\|s_{q_n}(\lambda)^{-1}\| \le \frac{|\sin\lambda|^{-1}}{1-|\sin\lambda|^{-1}\cdot\|\mathcal F(\lambda)g_{q_n}\|}\le4,
\quad \lambda\in K_N,\ N>N_2.
\end{equation}

Hence we conclude that the functions $m_q$ and $m_{q_n}$, $n>n_0$, have no poles on $K_N$ as $N>N_0:=\max\{N_1,N_2\}$. Moreover, it follows from the definitions of $m_q$ and $m_{q_n}$ that
\begin{align*}
\|m_q(\lambda)-m_{q_n}(\lambda)\|=\|-s_q(\lambda)^{-1}c_q(\lambda)+s_{q_n}(\lambda)^{-1}c_{q_n}(\lambda)\|
\\
\le\|s_{q_n}(\lambda)^{-1}\|\cdot\|c_q(\lambda)-c_{q_n}(\lambda)\|+
\|c_q(\lambda)\|\cdot\|s_q(\lambda)^{-1}-s_{q_n}(\lambda)^{-1}\|.
\end{align*}
Since
$$
\|s_q(\lambda)^{-1}-s_{q_n}(\lambda)^{-1}\|\le
\|s_{q_n}(\lambda)^{-1}\|\cdot\|s_q(\lambda)-s_{q_n}(\lambda)\|\cdot\|s_q(\lambda)^{-1}\|,
$$
taking into account (\ref{SSnCCn}), (\ref{SInvEst}) and (\ref{SnInvEst}) we arrive at the equality (\ref{MLimLemmaEq}).
\end{proof}

We use the preceding lemma to obtain the following auxiliary result:

\begin{lemma}\label{WeakConvProp}
Let $q\in\mathfrak Q_p$, $p\ge1$. Assume that for all $n\in\mathbb N$, $q_n\in C([0,1], M_r)$ and $\lim_{n\to\infty}\|q-q_n\|_{\mathfrak Q_p}=0$. Then for all $f\in\mathcal S^r$ (see Appendix~\ref{AppSpaces}) such that $\supp f\subset[-1,1]$,
$$
\lim\limits_{n\to\infty}(H_{\mu_{q_n}},f)=(H_{\mu_q},f),
$$
where $\mu_{q_n}$ and $\mu_q$ are the spectral measures of the operators $T_{q_n}$ and $T_q$, respectively.
\end{lemma}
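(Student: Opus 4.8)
The plan is to rewrite both sides as sums over the poles of the Weyl--Titchmarsh functions and then exploit the uniform convergence $m_{q_n}\to m_q$ on the circles $K_N$ furnished by Lemma~\ref{MLimLemma}. First I would reduce the claim to a statement about the spectral measures alone. Since $\supp f\subset[-1,1]$, Definition~\ref{Def1} gives $(H_{\mu_{q_n}},f)=(\mu_{q_n}-\mu_0,\widehat f)$ and $(H_{\mu_q},f)=(\mu_q-\mu_0,\widehat f)$; as $\mu_0$ (see (\ref{Mu0Meas})) is independent of the potential, its contributions cancel, and it remains to prove that
$$
\lim_{n\to\infty}\sum_{j}\alpha_j(q_n)\widehat f(\lambda_j(q_n))
=\sum_{j}\alpha_j(q)\widehat f(\lambda_j(q)).
$$
Here $\widehat f(\lambda)=\int_{-1}^1\e^{2\i\lambda s}f(s)\,\d s$ is entire and, $f$ being smooth with compact support, rapidly decreasing on $\mathbb R$, so that all these series converge absolutely.

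Next I would represent the partial sums as contour integrals. By (\ref{NormMatrDef}) the function $m_q$ has poles at the points $\lambda_j(q)$ with $\res_{\lambda=\lambda_j(q)}m_q=-\alpha_j(q)$, and $\widehat f$ is analytic; hence for $N>N_0$, when $K_N=\{|\lambda|=\pi N+\pi/6\}$ carries no poles of $m_q$ or $m_{q_n}$ (Lemma~\ref{MLimLemma}), the residue theorem yields
$$
\sum_{|\lambda_j(q)|<\pi N+\pi/6}\alpha_j(q)\widehat f(\lambda_j(q))
=-\frac1{2\pi\i}\oint_{K_N}m_q(\lambda)\widehat f(\lambda)\,\d\lambda=:S_N(q),
$$
and the analogous identity holds for each $q_n$. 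For \emph{fixed} $N$ this step is harmless: by Lemma~\ref{MLimLemma} we have $m_{q_n}\to m_q$ uniformly on $K_N$, while $\widehat f$ is bounded there, so $S_N(q_n)\to S_N(q)$ as $n\to\infty$.

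The main obstacle is a \emph{tail estimate that is uniform in $n$}. Grouping the remaining poles by the disks $D_m:=\{|\lambda-\pi m|<1\}$, I must show that $\sum_{|m|>N}\sum_{\lambda_j\in D_m}\alpha_j\widehat f(\lambda_j)$ is small for large $N$, uniformly over the sequence. The key is to repeat the argument of Lemma~\ref{MLimLemma}, but now on all the circles $\partial D_m$ with $|m|$ large at once: from $s_{q_n}(\lambda)=(\sin\lambda)I+\mathcal F(\lambda)g_{q_n}$, the lower bound $|\sin\lambda|\ge c_0>0$ on $\bigcup_m\partial D_m$, and the uniform Riemann--Lebesgue estimate
$$
\e^{-|\Im\lambda|}\|\mathcal F(\lambda)g_{q_n}\|
\le \e^{-|\Im\lambda|}\|\mathcal F(\lambda)g_q\|+\|g_{q_n}-g_q\|_{L_1}
$$
(which tends to $0$ as $|\lambda|\to\infty$ uniformly in $n$, using Lemma~\ref{RefRiemannLebesgue} together with $g_{q_n}\to g_q$ and $h_{q_n}\to h_q$ in $L_1$ from Lemma~\ref{SCLemma}), one obtains a uniform bound $\|m_{q_n}(\lambda)\|\le C$ for $\lambda\in\partial D_m$, $|m|>M_0$, $n>n_0$. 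The same estimates show that, for $|m|$ large, the disks $D_m$ are disjoint, carry no poles on their boundaries, and capture all eigenvalues of $T_{q_n}$ of large modulus (all being real and clustering near $\pi\mathbb Z$, uniformly in $n$). Consequently
$$
\bigg\|\sum_{\lambda_j(q_n)\in D_m}\alpha_j(q_n)\bigg\|
=\bigg\|\frac1{2\pi\i}\oint_{\partial D_m}m_{q_n}(\lambda)\,\d\lambda\bigg\|\le C,
$$
and since each $\alpha_j\ge0$ we get $\sum_{\lambda_j\in D_m}\|\alpha_j\|\le r\operatorname{tr}\big(\sum_{\lambda_j\in D_m}\alpha_j\big)/1\le rC$. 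Writing $\widehat f(\lambda_j)=\widehat f(\pi m)+(\widehat f(\lambda_j)-\widehat f(\pi m))$ and using $|\widehat f(\lambda_j)-\widehat f(\pi m)|\le\sup_{|t-\pi m|\le1}|\widehat f'(t)|$ together with the rapid decay of $\widehat f$ and $\widehat f'$, I would bound the tail by $C\sum_{|m|>N}\big(|\widehat f(\pi m)|+\sup_{|t-\pi m|\le1}|\widehat f'(t)|\big)$, which tends to $0$ as $N\to\infty$ uniformly in $n>n_0$.

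Finally I would combine the three ingredients by an $\varepsilon/3$ argument: given $\varepsilon>0$, fix $N$ so large that the tails $(\mu_q,\widehat f)-S_N(q)$ and $(\mu_{q_n},\widehat f)-S_N(q_n)$ are both smaller than $\varepsilon$ (the latter uniformly in $n>n_0$, by the previous paragraph; the former by absolute convergence), and then let $n\to\infty$ using $S_N(q_n)\to S_N(q)$. This gives $\lim_{n\to\infty}(\mu_{q_n},\widehat f)=(\mu_q,\widehat f)$, and hence the assertion. The only delicate point is the uniform control of $m_{q_n}$ away from its poles; everything else is a routine reorganization of absolutely convergent series together with the elementary inequality $\|\alpha_j\|\le\operatorname{tr}\alpha_j$ valid for $\alpha_j\ge0$.
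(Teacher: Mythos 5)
Your proof is correct and follows essentially the same route as the paper: reduce to the partial sums inside the circles $K_N$, represent them as contour integrals of $m_{q_n}\widehat f$ and $m_q\widehat f$, and invoke Lemma~\ref{MLimLemma} for the convergence at fixed $N$. The only differences are cosmetic: the paper first approximates $\widehat f$ by polynomials on a finite interval before applying the residue theorem (unnecessary, since $\widehat f$ is entire, as you use), and it disposes of the uniform-in-$n$ tail more tersely by citing the uniform boundedness of the norming matrices, whereas you supply the contour-integral bound over the disks $D_m$ that actually justifies that uniformity.
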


\begin{proof}
Let $(\alpha_j,\lambda_j)_{j\in\mathbb Z}$ and $(\alpha_{j,n},\lambda_{j,n})_{j\in\mathbb Z}$ be the spectral data of the operators $T_q$ and $T_{q_n}$, respectively. Then for all $f\in\mathcal S^r$, $\supp f\subset[-1,1]$,
$$
(H_{\mu_{q_n}},f)=
\sum\limits_{j=-\infty}^\infty \alpha_{j,n} \widehat f(\lambda_{j,n})-\sum\limits_{k=-\infty}^\infty  \widehat f(\pi k),
$$
where $\widehat f(\lambda)$ is defined by (\ref{Ftransform}).
Since $\widehat f\in\mathcal S^r$ and for all $n\in\mathbb N$ the norms $\|\alpha_{j,n}\|$, $j\in\mathbb Z$, are uniformly bounded (see (\ref{AlphaAsymp})), it suffices to show that there exists $N_0\in\mathbb N$ such that for all natural $N>N_0$,
\begin{equation}\label{LimEq1}
\lim\limits_{n\to\infty}\sum\limits_{|\lambda_{j,n}|<\pi N+\frac\pi6} \alpha_{j,n} \widehat f(\lambda_{j,n}) =
\sum\limits_{|\lambda_j|<\pi N+\frac\pi6} \alpha_j \widehat f(\lambda_j).
\end{equation}

Note that for fixed $N\in\mathbb N$ both the left and right parts of (\ref{LimEq1}) depend only on the values of $\widehat f$ on some (large enough) finite interval. Since the function $\widehat f\in\mathcal S^r$ can be uniformly approximated by ``polynomials'' from $\mathcal P^r$ (see Appendix~\ref{AppSpaces}) on any finite interval, (\ref{LimEq1}) will be proved if we show that
$$
\lim\limits_{n\to\infty}\sum\limits_{|\lambda_{j,n}|<\pi N+\frac\pi6} \alpha_{j,n} P(\lambda_{j,n}) =
\sum\limits_{|\lambda_j|<\pi N+\frac\pi6} \alpha_j P(\lambda_j)
$$
for an arbitrary $P\in\mathcal P^r$.

Since the functions $m_q$ and $m_{q_n}$ have only poles of first order at points $\lambda_j$ and $\lambda_{j,n}$, respectively, by virtue of the asymptotics (\ref{LambdaAsymp}) and Lemma~\ref{MLimLemma} we find that
$$
\sum\limits_{|\lambda_{j,n}|<\pi N+\frac\pi6} \alpha_{j,n} P(\lambda_{j,n})=\frac{1}{2\pi \i}\oint\limits_{K_N}m_{q_n}(\lambda)P(\lambda)\ \d\lambda
$$
and
$$
\sum\limits_{|\lambda_j|<\pi N+\frac\pi6} \alpha_j P(\lambda_j)=\frac{1}{2\pi \i}\oint\limits_{K_N}m_q(\lambda)P(\lambda)\ \d\lambda
$$
for large values of $N$, where $K_N:=\{\lambda\in\mathbb C\ | \ |\lambda|=\pi N+\pi/6\}$. Hence it is enough to prove that there exists $N_0\in\mathbb N$ such that for all natural $N>N_0$,
$$
\lim\limits_{n\to\infty}\frac{1}{2\pi \i}\oint\limits_{K_N}m_{q_n}(\lambda)P(\lambda)\ \d\lambda =
\frac{1}{2\pi \i}\oint\limits_{K_N}m_q(\lambda)P(\lambda)\ \d\lambda.
$$
But this claim follows directly from (\ref{MLimLemmaEq}), and thus the proof is complete.
\end{proof}

Now we can use Lemma~\ref{WeakConvProp} to prove Theorem~\ref{B3Th}:

\begin{proofB3}
Let $q\in\mathfrak Q_p$, $p\ge1$. Assume that for all $n\in\mathbb N$, $q_n\in C([0,1],M_r)$ and $\lim_{n\to\infty}\|q-q_n\|_{\mathfrak Q_p}=0$. Let $\mu_q$ and $\mu_{q_n}$, $n\in\mathbb N$, be the spectral measures of Dirac operators $T_q$ and $T_{q_n}$, respectively. Then from Lemma~\ref{WeakConvProp} we obtain that for all $f\in\mathcal S^r$ such that $\supp f\subset[-1,1]$,
\begin{equation}\label{B4AuxEq2}
\lim\limits_{n\to\infty}(H_{\mu_{q_n}},f)=(H_{\mu_q},f).
\end{equation}
It follows from \cite{dirac1} that for all $n\in\mathbb N$, $H_{\mu_{q_n}}\in \mathfrak H_2$ and that $H_{\mu_{q_n}}=\Theta^{-1}(q_n)$. Hence (\ref{B4AuxEq2}) reads
$$
\lim\limits_{n\to\infty}(\Theta^{-1}(q_n),f)=(H_{\mu_q},f).
$$
From the other side, since $\Theta^{-1}$ acts continuously from $\mathfrak Q_p$ to $\mathfrak H_p$, we observe that for all $f\in\mathcal S^r$, $\supp f\subset[-1,1]$,
$$
\lim\limits_{n\to\infty}(\Theta^{-1}(q_n),f)=(\Theta^{-1}(q),f).
$$
Thus we obtain that for all $f\in\mathcal S^r$, $\supp f\subset[-1,1]$,
$$
(H_{\mu_q},f)=(\Theta^{-1}(q),f),
$$
i.e. $H_{\mu_q}=\Theta^{-1}(q)$, as desired.
\end{proofB3}

Thus, so far we have proved that for an arbitrary potential $q\in\mathfrak Q_p$, $p\ge1$, the spectral data $\mathfrak a_q$ of the operator $T_q$ satisfy the conditions $(B_1)$--$(B_4)$. This is the necessity part of Theorem~\ref{Th1}. The next section is devoted to establishing its sufficiency part and solving the inverse spectral problem for the operator $T_q$.

\section{Inverse spectral problem}

In this section we finish the proof of Theorem~\ref{Th1} and prove Theorems~\ref{Th2} and \ref{Th4}, and thus solve the inverse spectral problem for the operator $T_q$. Namely, we show that if a sequence $\mathfrak a:=((\lambda_j,\alpha_j))_{j\in\mathbb Z}$ satisfies the conditions $(B_1)$--$(B_4)$, then there exists a unique potential $q\in\mathfrak Q_p$ such that $\mathfrak a=\mathfrak a_q$. Theorem~\ref{Th4} then suggests a method how to reconstruct this potential from the spectral data.

\subsection{Proof of Theorems~\ref{Th1} and \ref{Th4}}

Here we prove that if a sequence $\mathfrak a$ satisfies the conditions $(B_1)$--$(B_4)$, $\mu:=\mu^{\mathfrak a}$ and $H:=H_\mu$, then $H\in\mathfrak H_p$ and $\mathfrak a=\mathfrak a_q$ for $q=\Theta(H)$. This is the sufficiency part of Theorem~\ref{Th1} and the proof of Theorem~\ref{Th4}.

We start from the following technical but nevertheless important lemma:

\begin{lemma}\label{FHeqLemma}
For an arbitrary sequence $\mathfrak a:=((\lambda_j,\alpha_j))_{j\in\mathbb Z}$ set $\mu:=\mu^{\mathfrak a}$ and $H:=H_\mu$. If $H\in L_p((-1,1),M_r)$, $p\ge1$, then the following equality holds true:
\begin{equation}\label{FHeq}
\mathscr I+\mathscr F_H=\slim\limits_{N\to\infty}
\sum_{j=-N}^N \Phi_0(\lambda_j)\alpha_j\Phi_0^*(\lambda_j),
\end{equation}
where $\mathscr F_H\in\mathscr G_p(M_{2r})$ is an integral operator in $\mathbb H$ with kernel $F_H$ given by (\ref{FH}) and the operators $\Phi_0(\lambda):\mathbb C^r\to\mathbb H$ are given by (\ref{Phi0Def}).
\end{lemma}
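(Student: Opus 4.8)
The plan is to exploit the \emph{positivity} of the summands rather than any Riesz basis property (note that the hypotheses of Lemma~\ref{B2MainLemma} are \emph{not} assumed here; we are given only $H\in L_p$). Write $S_N:=\sum_{j=-N}^N\Phi_0(\lambda_j)\alpha_j\Phi_0^*(\lambda_j)$. Since $\alpha_j\in M_r^+$, each summand equals $(\Phi_0(\lambda_j)\alpha_j^{1/2})(\Phi_0(\lambda_j)\alpha_j^{1/2})^*\ge0$, so $(S_N)_N$ is an \emph{increasing} sequence of non-negative self-adjoint operators in $\mathbb H$. Such a sequence converges strongly to a bounded operator precisely when it is uniformly bounded, and then the limit is pinned down by its quadratic form on any dense subspace. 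Moreover $\mathscr I+\mathscr F_H$ is bounded because $H\in L_1$. Thus it suffices to compute $\lim_N\langle S_N\phi,\phi\rangle$ on a convenient dense set, check it equals $\langle(\mathscr I+\mathscr F_H)\phi,\phi\rangle$, and then use monotonicity to pass from forms to the strong limit.

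First I would reduce the quadratic form to a scalar sum over the $\lambda_j$. Using the unitary $V:\mathcal H\to\mathbb H$, $\mathcal H:=L_2((-1,1),\mathbb C^r)$, introduced in the proof of Proposition~\ref{B2Lemma}, which relates the exponentials $\e^{\i\lambda\,\cdot}$ to the operators $\Phi_0(\lambda)$, I test against vectors $\phi=V(\rho\,a)$ with real-valued $\rho\in C^\infty_c((-1,1))$ and $a\in\mathbb C^r$. A direct computation of $\Phi_0^*(\lambda_j)\phi$ then gives
$$
\langle S_N\phi,\phi\rangle=\frac12\sum_{j=-N}^N\widehat f(\lambda_j)\,(\alpha_j a\,|\,a),\qquad
\widehat f(\lambda)=\Big|\int_{-1}^1\e^{\i\lambda t}\rho(t)\,\d t\Big|^2\ge0,
$$
where $\widehat f$ is the transform (\ref{Ftransform}) of the real, even function $f(s):=2\int_{-1}^1\rho(t+2s)\rho(t)\,\d t$, which lies in $C^\infty_c((-1,1))$; the non-negativity of each term is consistent with the monotonicity of $(S_N)_N$.

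Next I would pass to the limit. Recall from Definition~\ref{Def1} that $\mu^{\mathfrak a}$ is paired with Schwartz functions, so that, $\widehat{f a}=\widehat f\,a$ being Schwartz, the series $\sum_j\alpha_j\widehat f(\lambda_j)a$ converges absolutely and its symmetric partial sums converge to $(\widehat\mu,f a)$. Since $\supp(f a)\subset[-1,1]$ and $H:=H_\mu\in L_1$, the defining relation (\ref{AccFirstDef}) yields
$$
\lim_{N\to\infty}\sum_{j=-N}^N\widehat f(\lambda_j)\,\alpha_j a
=(H_\mu,f a)+\sum_{n\in\mathbb Z}\widehat f(\pi n)\,a
=\int_{-1}^1 f(u)\,H(u)\,a\,\d u+\sum_{n\in\mathbb Z}\widehat f(\pi n)\,a .
$$
Parseval's identity for $\{\tfrac1{\sqrt2}\e^{\i\pi n\cdot}\}$ in $L_2(-1,1)$ gives $\tfrac12\sum_{n}\widehat f(\pi n)=\int_{-1}^1|\rho|^2=\|\phi\|^2/|a|^2$, so the free part reproduces $\langle\mathscr I\phi,\phi\rangle$, while the remaining term $\tfrac12(\int_{-1}^1 f H a\,\d u\mid a)$ matches $\langle\mathscr F_H\phi,\phi\rangle$ by the explicit kernel (\ref{FH}) (under $V$ the operator $\mathscr F_H$ becomes the integral operator with kernel $\tfrac12 H(\tfrac{t-s}{2})$; this is a direct, if tedious, change of variables). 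Hence $\lim_N\langle S_N\phi,\phi\rangle=\langle(\mathscr I+\mathscr F_H)\phi,\phi\rangle$ for all such $\phi$, which span a dense subspace $\mathcal D\subset\mathbb H$.

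Finally I would upgrade form-convergence on $\mathcal D$ to strong convergence; this is the step where positivity is indispensable. The monotone limit $Q(\phi):=\sup_N\langle S_N\phi,\phi\rangle$ is a lower semicontinuous quadratic form agreeing with $\langle(\mathscr I+\mathscr F_H)\phi,\phi\rangle$ on $\mathcal D$; by lower semicontinuity and boundedness of $\mathscr I+\mathscr F_H$ one gets $Q(\phi)\le\|\mathscr I+\mathscr F_H\|\,\|\phi\|^2$ for \emph{all} $\phi$, whence $\sup_N\|S_N\|<\infty$. A uniformly bounded increasing sequence of non-negative operators converges strongly (via $\|(S_\infty-S_N)\phi\|^2\le\|S_\infty-S_N\|\,\langle(S_\infty-S_N)\phi,\phi\rangle\to0$), and the limit, matching $\mathscr I+\mathscr F_H$ in quadratic form on the dense set $\mathcal D$, must coincide with it; this proves (\ref{FHeq}). \textbf{The main obstacle is precisely this last upgrade}: with only $H\in L_p$ and no control such as a Riesz basis, dense weak convergence alone would not force the strong limit, and it is the monotonicity of $(S_N)_N$ that supplies the uniform bound. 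The other delicate point is the bookkeeping in the distributional identity — the absolute convergence of $\sum_j\alpha_j\widehat f(\lambda_j)$ and the correct identification of the free contribution through Parseval.
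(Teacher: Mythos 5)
Your strategy is the same as the paper's: establish the identity on a dense family of smooth test vectors by unwinding the distributional definition (\ref{AccFirstDef}) of $H_\mu$, and then use the fact that the partial sums form a monotonically increasing sequence of non-negative self-adjoint operators to obtain the uniform bound and upgrade to strong convergence. (The paper conjugates by a unitary $V$ to the convolution operator $\mathcal I+\mathscr H$ on $L_2((0,1),\mathbb C^r)$ and proves the full \emph{vector} identity $(\mathcal I+\mathscr H)f=\lim_N\sum_{j=-N}^N\Psi_0(\lambda_j)\alpha_j\Psi_0^*(\lambda_j)f$ for all $f\in\mathcal S^r$ with $\supp f\subset[0,1]$, then invokes the monotone convergence theorem for self-adjoint operators exactly as you do.)

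There is, however, one step in your write-up that does not work as stated. You compute the quadratic form $\langle S_N\phi,\phi\rangle$ only for \emph{product} vectors $\phi=V(\rho\,a)$ with scalar $\rho$ and $a\in\mathbb C^r$. The collection of such vectors is not a linear subspace, and it is not dense in $\mathbb H$ --- only its linear span is (in $L_2\otimes\mathbb C^r$ the simple tensors form a proper, non-dense subset as soon as $r\ge2$). A bounded self-adjoint operator is \emph{not} determined by its quadratic form on a set that merely spans: polarization requires knowing the form on the span, and $\langle S_N(\phi+\psi),\phi+\psi\rangle$ involves cross terms you have not computed. Consequently, both your lower-semicontinuity bound $Q(\phi)\le\|\mathscr I+\mathscr F_H\|\,\|\phi\|^2$ for \emph{all} $\phi$ (which needs approximation of an arbitrary $\phi$ by vectors on which the form identity is known) and your final identification of the strong limit with $\mathscr I+\mathscr F_H$ are unjustified as written. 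The repair is routine and stays entirely within your framework: carry out the same computation either for the sesquilinear form $\langle S_N V(\rho_1 a),V(\rho_2 b)\rangle$ on pairs of products (which again reduces to pairing $\mu$ with a smooth function supported in $[-1,1]$), or more simply for arbitrary vector-valued $\rho\in C^\infty_c((-1,1),\mathbb C^r)$, so that the identity is established on a genuine dense \emph{subspace}. With that modification the argument is complete and coincides with the paper's.
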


\begin{proof}
Given an arbitrary sequence $\mathfrak a:=((\lambda_j,\alpha_j))_{j\in\mathbb Z}$, set $\mu:=\mu^{\mathfrak a}$, $H:=H_\mu$, and assume that $H\in L_p((-1,1),M_r)$. Denote by $\mathscr H$ an integral operator in $L_2((0,1),\mathbb C^r)$ acting by the formula
$$
(\mathscr Hf)(x):=\int\limits_0^1 H(x-t)f(t)\ \d t,\qquad x\in(0,1),
$$
and let $\mathcal I$ stand for the identity operator in $L_2((0,1),\mathbb C^r)$.

Let us show that the operator $\mathcal I+\mathscr H$ is unitarily equivalent to $\mathscr I+\mathscr F_H$. Indeed, consider the unitary transformation $V:L_2((0,1),\mathbb C^r)\to\mathbb H$ given by the formula
$$
(Vf)(t):=\frac1{\sqrt2}\begin{pmatrix}f\left(\frac{1+u}2\right)\\
f\left(\frac{1-u}2\right)\end{pmatrix},\qquad f\in L_2((0,1),\mathbb
C^r).
$$
Then a simple verification shows that
$$
\mathscr I+\mathscr F_H=V(\mathcal I+\mathscr H)V^{-1}.
$$

Furthermore, we find that for all $j\in\mathbb Z$,
$$
\Phi_0(\lambda_j)\alpha_j\Phi_0^*(\lambda_j)
=V\Psi_0(\lambda_j)\alpha_j\Psi_0^*(\lambda_j) V^{-1},
$$
where for an arbitrary $\lambda\in\mathbb C$ the operator $\Psi_0(\lambda)$ acts from $\mathbb C^r$ to $L_2((0,1),\mathbb C^r)$ by the formula
$$
[\Psi_0(\lambda)c](x):=\e^{2\i\lambda x}c,
$$
$\Psi_0^*(\lambda):=[\Psi_0(\lambda)]^*$. Hence, in order to prove (\ref{FHeq}) it suffices to show that
\begin{equation}\label{SmoothAuxEq1}
\mathcal I+\mathscr H=
\slim\limits_{N\to\infty}\sum_{j=-N}^N \Psi_0(\lambda_j)\alpha_j\Psi_0^*(\lambda_j).
\end{equation}

Firstly, we observe that (\ref{SmoothAuxEq1}) holds true on the set of smooth functions $f\in\mathcal S^r$.
Namely, let $f\in\mathcal S^r$, $\supp f\subset[0,1]$. Then, by virtue of definitions (\ref{Ftransform}) and (\ref{AccFirstDef}), we have that for an arbitrary but fixed $x\in(0,1)$,
\begin{align*}
[(\mathcal I+\mathscr H)f](x)=
f(x)+\int\limits_0^1 H(x-t)f(t)\ \d t
=f(x)+\int\limits_{-1}^1 H(s)f(x-s)\ \d s
\\
=\sum_{j=-\infty}^\infty \alpha_j \int\limits_{-1}^1 \e^{2\i\lambda_j s}f(x-s)\ \d s
=\sum_{j=-\infty}^\infty \alpha_j \int\limits_0^1 \e^{2\i\lambda_j (x-t)} f(t)\ \d t,
\end{align*}
noting that the series in the last two expressions are convergent because $f\in\mathcal S^r$.  Thus we conclude that for all $f\in\mathcal S^r$, $\supp f\subset[0,1]$,
\begin{equation}\label{SmoothAuxEq2}
(\mathcal I+\mathscr H)f=
\lim\limits_{N\to\infty}\sum_{j=-N}^N \Psi_0(\lambda_j)\alpha_j\Psi_0^*(\lambda_j)f.
\end{equation}

Since $\{f\in\mathcal S^r\ | \ \supp f\subset[0,1]\}$ is dense everywhere in $L_2((0,1),\mathbb C^r)$, in order to finish the proof it remains to show that there exists the limit $\slim_{N\to\infty}\sum_{j=-N}^N C_j$, where the operators $C_j$, $j\in\mathbb Z$, act in $L_2((0,1),\mathbb C^r)$ by the formula
$$
C_j:=\Psi_0(\lambda_j)\alpha_j\Psi_0^*(\lambda_j).
$$
To this end, note that for every $j\in\mathbb Z$ the operator $C_j$ is self-adjoint and non-negative, and that (\ref{SmoothAuxEq2}) implies that for all $N\in\mathbb N$,
$$
\sum_{j=-N}^N C_j\le\mathcal I+\mathscr H.
$$
Therefore, from well-known theorem on convergence of monotonic sequence of non-negative self-adjoint operators (see, e.g., \cite[Chapter IV, \S2]{Sadovnichy}) we obtain that the sequence $\sum_{j=-N}^N C_j$, $N\in\mathbb N$, is convergent in the strong operator topology. Hence (\ref{SmoothAuxEq1}) follows, and thus the proof is complete.
\end{proof}

Now, the following observation will serve as a part of Theorem~\ref{Th4}:

\begin{lemma}\label{InvAccLemma}
Let a sequence $\mathfrak a$ satisfy the conditions $(B_3)$ and $(B_4)$, set $\mu:=\mu^{\mathfrak a}$ and $H:=H_\mu$. Then $H\in\mathfrak H_p$.
\end{lemma}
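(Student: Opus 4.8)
The plan is to verify the two defining properties of an accelerant from Definition~\ref{Def2} directly, noting that condition $(B_4)$ already provides $H\in L_p((-1,1),M_r)$, which is both the membership required for $\mathfrak H_p$ and the hypothesis needed to invoke Lemma~\ref{FHeqLemma}. That lemma is the engine of the whole argument: since $H\in L_p$, it yields
$$
\mathscr I+\mathscr F_H=\slim\limits_{N\to\infty}\sum_{j=-N}^N\Phi_0(\lambda_j)\alpha_j\Phi_0^*(\lambda_j).
$$
Everything below exploits the structure of this right-hand side together with the completeness hypothesis $(B_3)$.

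First I would extract the symmetry relation and positivity. Each summand $\Phi_0(\lambda_j)\alpha_j\Phi_0^*(\lambda_j)$ is self-adjoint and non-negative because $\alpha_j\in M_r^+$, and the limit $\mathscr I+\mathscr F_H$ is a bounded operator (as $\mathscr F_H$ is a bounded integral operator, cf.\ Lemma~\ref{FHeqLemma}); hence $\mathscr I+\mathscr F_H$ is bounded, self-adjoint and non-negative on $\mathbb H$. Self-adjointness of $\mathscr F_H$ read off the kernel formula (\ref{FH}) is exactly the condition $H(-x)=H(x)^*$, the first requirement for an accelerant. Transporting through the unitary equivalence $\mathscr I+\mathscr F_H=V(\mathcal I+\mathscr H)V^{-1}$ established in the proof of Lemma~\ref{FHeqLemma}, where $\mathscr H$ is convolution by $H$ on $L_2((0,1),\mathbb C^r)$ and is bounded since $H\in L_1$, the operator $\mathcal I+\mathscr H$ is likewise bounded, self-adjoint and non-negative.

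Next I would prove that $\mathscr I+\mathscr F_H$ is injective, and this is where $(B_3)$ enters. Suppose $(\mathscr I+\mathscr F_H)g=0$. Then
$$
0=\bigl((\mathscr I+\mathscr F_H)g\,\bigm|\,g\bigr)=\sum_{j}\bigl(\alpha_j\Phi_0^*(\lambda_j)g\,\bigm|\,\Phi_0^*(\lambda_j)g\bigr)=\sum_{j}\|\alpha_j^{1/2}\Phi_0^*(\lambda_j)g\|^2,
$$
a convergent series of non-negative terms, so every term vanishes and $\alpha_j\Phi_0^*(\lambda_j)g=0$ for all $j$. Equivalently $\Phi_0^*(\lambda_j)g\in\ker\alpha_j=(\Ran\alpha_j)^\bot$, i.e.\ $g$ is orthogonal in $\mathbb H$ to $\Phi_0(\lambda_j)d$ for every $d\in\Ran\alpha_j$ and every $j$. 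The unitary $V:\mathcal H\to\mathbb H$ from the proof of Proposition~\ref{B2Lemma} carries the system $\mathcal X$ onto $\{\Phi_0(\lambda_j)d\mid j\in\mathbb Z,\ d\in\Ran\alpha_j\}$ (up to scalar normalization), so the completeness of $\mathcal X$ asserted by $(B_3)$ gives completeness of the latter system in $\mathbb H$. Hence $g=0$, and by unitary equivalence $\mathcal I+\mathscr H$ is injective as well.

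Finally I would reduce the remaining accelerant condition, for an arbitrary $a\in(0,1]$, to the injectivity just proved. Given $f\in L_2((0,a),\mathbb C^r)$ with $f(x)+\int_0^aH(x-t)f(t)\,\d t=0$ on $(0,a)$, extend $f$ by zero to $\tilde f\in L_2((0,1),\mathbb C^r)$; since $\mathscr H$ is a convolution one checks that
$$
\bigl((\mathcal I+\mathscr H)\tilde f\,\bigm|\,\tilde f\bigr)=\bigl((\mathcal I+\mathscr H_a)f\,\bigm|\,f\bigr)=0,
$$
where $\mathscr H_a$ is the compression to $(0,a)$. Non-negativity of $\mathcal I+\mathscr H$ then forces $(\mathcal I+\mathscr H)\tilde f=0$, and injectivity yields $\tilde f=0$, so $f=0$. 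Thus the integral equation has only the zero solution for every $a\in(0,1]$, and together with $H(-x)=H(x)^*$ and $H\in L_p$ this gives $H\in\mathfrak H_p$. The main obstacle I anticipate is the injectivity step: justifying the termwise vanishing in the strong-limit quadratic form and correctly transporting $(B_3)$ through the unitary of Proposition~\ref{B2Lemma}; once injectivity on the whole interval $(0,1)$ is secured, the uniform treatment of all $a\in(0,1]$ by extension by zero is routine.
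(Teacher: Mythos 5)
Your proof is correct and follows essentially the same route as the paper: condition $(B_4)$ supplies $H\in L_p$, Lemma~\ref{FHeqLemma} gives the strong-limit representation of $\mathscr I+\mathscr F_H$ as a sum of non-negative operators, and $(B_3)$ forces its kernel to be trivial. The only divergence is at the end: where the paper simply invokes Lemma~\ref{FactLemma1} together with compactness of $\mathscr F_H$ to pass from $\mathscr I+\mathscr F_H>0$ to $H\in\mathfrak H_p$, you re-derive that implication by hand (the symmetry $H(-x)=H(x)^*$ and the extension-by-zero argument on each $(0,a)$), which is sound but is already contained in the cited factorization results.
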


\begin{proof}
Firstly, it follows from the condition $(B_4)$ that $H\in L_p((-1,1),M_r)$. Construct the function $F_H\in G_p(M_{2r})$ by formula (\ref{FH}) and let $\mathscr F_H\in\mathscr G_p(M_{2r})$ be an integral operator in $\mathbb H$ with kernel $F_H$. Taking into account Lemma~\ref{FactLemma1}, we find that the present lemma will be proved if we show that $\mathscr I+\mathscr F_H>0$.

It follows from Lemma~\ref{FHeqLemma} that
$$
\mathscr I+\mathscr F_H
=\slim\limits_{N\to\infty} \sum_{j=-N}^N \Phi_0(\lambda_j)\alpha_j\Phi_0^*(\lambda_j).
$$
Therefore, since for every $j\in\mathbb Z$ the operator $\Phi_0(\lambda_j)\alpha_j\Phi_0^*(\lambda_j)$ is non-negative, it follows that $\mathscr I+\mathscr F_H\ge0$. Hence, since the operator $\mathscr F_H$ is compact, we obtain that $\mathscr I+\mathscr F_H>0$ if and only if $\ker(\mathscr I+\mathscr F_H)=\{0\}$.
Now taking into account that for all $j\in\mathbb Z$, $\ker\Phi_0(\lambda_j)=\{0\}$, we find that
$$
\ker(\mathscr I+\mathscr F_H)=
\ker \left(
\sum_{j=-\infty}^\infty \Phi_0(\lambda_j)\alpha_j\Phi_0^*(\lambda_j)
\right)
=\bigcap\limits_{j=-\infty}^\infty \ker \alpha_j\Phi_0^*(\lambda_j)
=U\mathcal X^\bot,
$$
where $\mathcal X$ is the set from the condition $(B_3)$ defined by (\ref{XiB4}) and the unitary mapping $U:L_2((-1,1),M_r)\to\mathbb H$ acts by the formula
$$
(Uf)(x)=\left(\begin{array}{cc}f(x),&f(-x)\end{array}\right)^\top,\qquad x\in(0,1).
$$
Since, by virtue of the condition $(B_3)$, $\mathcal X^\bot=\{0\}$, we obtain that $\ker(\mathscr I+\mathscr F_H)=\{0\}$, as desired.
\end{proof}

Now, given an arbitrary sequence $\mathfrak a$ satisfying the conditions $(B_1)$--$(B_4)$, set $\mu:=\mu^{\mathfrak a}$, $H:=H_\mu$ and
$$
q:=\Theta(H).
$$
For all $j\in\mathbb Z$ define the operators
\begin{equation}\label{hatPdef}
\widetilde P_{\mathfrak a,j}:=\Phi_q(\lambda_j)\alpha_j\Phi_q^*(\lambda_j).
\end{equation}

\begin{remark}
By this construction, if $\mathfrak a=\mathfrak a_q$ for some $q\in\mathfrak Q_p$, then $\widetilde P_{\mathfrak a,j}=P_{q,j}$, i.e. $\widetilde P_{\mathfrak a,j}$ is an orthogonal projector of the operator $T_q$ corresponding to the eigenvalue $\lambda_j(q)$ (see Theorem~\ref{DirPropTh}, parts \it{(iii)} and \it{(iv)}).
\end{remark}

According to (\ref{hatPdef}), for every $j\in\mathbb Z$ the operator $\widetilde P_{\mathfrak a,j}$ is self-adjoint non-negative operator of finite rank. If we show that $\{\widetilde P_{\mathfrak a,j}\}_{j\in\mathbb Z}$ is a complete system of orthogonal projectors, then the same arguments as in \cite{sturm,dirac1} will lead us to the desired equality $\mathfrak a=\mathfrak a_q$, $q=\Theta(H)$.

We start from proving completeness of $\{\widetilde P_{\mathfrak a,j}\}_{j\in\mathbb Z}$, which is a direct consequence of Proposition~\ref{DirectFactTh}:

\begin{lemma}\label{InvAuxLemma1}
Let a sequence $\mathfrak a$ satisfy the conditions $(B_1)$--$(B_4)$, set $\mu:=\mu^{\mathfrak a}$, $H:=H_\mu$ and $q:=\Theta(H)$.
Then
\begin{equation}\label{PjCompl}
\slim\limits_{N\to\infty} \sum_{j=-N}^N \widetilde P_{\mathfrak a,j}=\mathscr I.
\end{equation}
\end{lemma}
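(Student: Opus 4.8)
The plan is to funnel all four hypotheses into Proposition~\ref{DirectFactTh} and then let the transformation operator $\mathscr I+\mathscr K_q$ absorb everything. First I would observe that the conditions $(B_3)$ and $(B_4)$ place us in the setting of Lemma~\ref{InvAccLemma}, so that $H:=H_\mu$ is a genuine accelerant, $H\in\mathfrak H_p$, and hence $q:=\Theta(H)$ is well defined. Proposition~\ref{DirectFactTh}, part (ii), then identifies the factorization kernel with the transformation kernel, $L_H=K_q$, i.e. $\mathscr L_H=\mathscr K_q$ as operators in $\mathbb H$. This is the conceptual heart of the argument: the potential $q=\Theta(H)$ has precisely the transformation operator that matches the factorization (\ref{FHfact}) of $\mathscr I+\mathscr F_H$.

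Next I would rewrite the partial sums of the projectors $\widetilde P_{\mathfrak a,j}=\Phi_q(\lambda_j)\alpha_j\Phi_q^*(\lambda_j)$ using the transformation identity (\ref{PhiTransfOp}). Since $\Phi_q(\lambda_j)=(\mathscr I+\mathscr K_q)\Phi_0(\lambda_j)$ and therefore $\Phi_q^*(\lambda_j)=\Phi_0^*(\lambda_j)(\mathscr I+\mathscr K_q)^*$, I obtain for every $N$ the factored expression
$$
\sum_{j=-N}^N\widetilde P_{\mathfrak a,j}
=(\mathscr I+\mathscr K_q)\left(\sum_{j=-N}^N\Phi_0(\lambda_j)\alpha_j\Phi_0^*(\lambda_j)\right)(\mathscr I+\mathscr K_q)^*.
$$
Because $(B_4)$ guarantees $H\in L_p((-1,1),M_r)$, Lemma~\ref{FHeqLemma} applies and the inner sum converges in the strong operator topology to $\mathscr I+\mathscr F_H$. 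The remaining point is to pass the strong limit through the fixed bounded factors $\mathscr I+\mathscr K_q$ and $(\mathscr I+\mathscr K_q)^*$; this is legitimate since these operators are homeomorphisms of $\mathbb H$, so for any $x\in\mathbb H$ one has $(\mathscr I+\mathscr K_q)^*x$ fixed, the inner sums applied to it converge, and the continuous operator $\mathscr I+\mathscr K_q$ preserves that convergence.

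Passing to the limit therefore yields $\slim_{N\to\infty}\sum_{j=-N}^N\widetilde P_{\mathfrak a,j}=(\mathscr I+\mathscr K_q)(\mathscr I+\mathscr F_H)(\mathscr I+\mathscr K_q)^*$, and I would finish by substituting the factorization from Proposition~\ref{DirectFactTh}, part (i), with $\mathscr L_H=\mathscr K_q$: namely $\mathscr I+\mathscr F_H=(\mathscr I+\mathscr K_q)^{-1}(\mathscr I+\mathscr K_q^*)^{-1}$, using $(\mathscr I+\mathscr K_q)^*=\mathscr I+\mathscr K_q^*$. The resulting product telescopes, $(\mathscr I+\mathscr K_q)(\mathscr I+\mathscr K_q)^{-1}(\mathscr I+\mathscr K_q^*)^{-1}(\mathscr I+\mathscr K_q^*)=\mathscr I$, which is exactly (\ref{PjCompl}). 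I expect the only genuinely delicate step to be the justification of interchanging the strong limit with the bounded factors (and, more bookkeeping than difficulty, tracking that $(B_3)$–$(B_4)$ alone already supply $H\in\mathfrak H_p$ and $L_H=K_q$, so that the completeness follows without further appeal to $(B_1)$–$(B_2)$).
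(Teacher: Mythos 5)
Your proposal is correct and follows essentially the same route as the paper: both rest on combining the factorization $\mathscr I+\mathscr F_H=(\mathscr I+\mathscr K_q)^{-1}(\mathscr I+{\mathscr K_q}^*)^{-1}$ from Proposition~\ref{DirectFactTh} with the strong limit of Lemma~\ref{FHeqLemma} via $\Phi_q(\lambda)=(\mathscr I+\mathscr K_q)\Phi_0(\lambda)$. You merely spell out the (correct) justification for conjugating the strong limit by the fixed bounded factors, a step the paper leaves implicit.
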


\begin{proof}
Let the assumptions of the present lemma hold true. Construct the function $F_H\in G_p(M_{2r})$ by formula (\ref{FH}) and denote by $\mathscr F_H\in\mathscr G_p(M_{2r})$ an integral operator in $\mathbb H$ with kernel $F_H$. It follows from Proposition~\ref{DirectFactTh} that
\begin{equation}\label{PjComplAuxEq1}
\mathscr I+\mathscr F_H=(\mathscr I+\mathscr K_q)^{-1}(\mathscr I+{\mathscr K_q}^*)^{-1},
\qquad q=\Theta(H),
\end{equation}
where $\mathscr K_q\in\mathscr G_p^+(M_{2r})$ is an integral operator in $\mathbb H$ with kernel $K_q$ (see Lemma~\ref{SCLemma}). From the other side, observing that by virtue of Lemma~\ref{FHeqLemma},
\begin{equation}\label{PjComplAuxEq2}
\mathscr I+\mathscr F_H=\slim\limits_{N\to\infty} \sum_{j=-N}^N  \Phi_0(\lambda_j)\alpha_j\Phi_0^*(\lambda_j),
\end{equation}
and recalling that $\Phi_q(\lambda)=(\mathscr I+\mathscr K_q)\Phi_0(\lambda)$ (see (\ref{PhiTransfOp})), by virtue of the equalities (\ref{PjComplAuxEq1}) and (\ref{PjComplAuxEq2}) we arrive at (\ref{PjCompl}).
\end{proof}

Next we prove that $\{\widetilde P_{\mathfrak a,j}\}_{j\in\mathbb Z}$ is a system of pairwise orthogonal projectors. Again, as in solving the direct spectral problem, the proof uses Lemma~\ref{B2MainLemma} and the theory of Riesz bases.

\begin{lemma}\label{InvAuxLemma2}
Let a sequence $\mathfrak a$ satisfy the conditions $(B_1)$--$(B_4)$, set $\mu:=\mu^{\mathfrak a}$, $H:=H_\mu$ and $q:=\Theta(H)$.
Then $\{\widetilde P_{\mathfrak a,j}\}_{j\in\mathbb Z}$ is a system of pairwise orthogonal projectors.
\end{lemma}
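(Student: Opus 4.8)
The plan is to reduce the whole statement to one clean fact: that the system $\{\Phi_q(\lambda_j)v_{j,k}\}_{j\in\mathbb Z,\ 1\le k\le\rank\alpha_j}$ is \emph{orthonormal} in $\mathbb H$, where the $v_{j,k}$ are the vectors from (\ref{AlphaVrel}). Indeed, each $\widetilde P_{\mathfrak a,j}$ is self-adjoint since $\alpha_j=\alpha_j^*$. Expanding $\widetilde P_{\mathfrak a,i}\widetilde P_{\mathfrak a,j}$ by means of (\ref{AlphaVrel}) turns every product into a combination of scalars $(\Phi_q(\lambda_i)v_{i,l}\,|\,\Phi_q(\lambda_j)v_{j,k})$; orthonormality makes these vanish for $(i,l)\ne(j,k)$, which gives at once $\widetilde P_{\mathfrak a,i}\widetilde P_{\mathfrak a,j}=0$ for $i\ne j$ as well as $\alpha_j\Phi_q^*(\lambda_j)\Phi_q(\lambda_j)\alpha_j=\alpha_j$, i.e. $\widetilde P_{\mathfrak a,j}^2=\widetilde P_{\mathfrak a,j}$. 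So it remains only to prove orthonormality.

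Setting $w_{j,k}:=\Phi_0(\lambda_j)v_{j,k}$, I would first record that $\{w_{j,k}\}$ is a basis for $\mathbb H$. Conditions $(B_1)$ and $(B_2)$ together give $\sum_{\lambda_j\in\Delta_n}\rank\alpha_j=r$ for all large $|n|$: by $(B_1)$ the matrix $\beta_n=\sum_{\lambda_j\in\Delta_n}\alpha_j$ tends to $I$, so it is invertible and $r=\rank\beta_n\le\sum_{\lambda_j\in\Delta_n}\rank\alpha_j$ eventually, while comparing consecutive partial sums in $(B_2)$ forces $\sum_{\lambda_j\in\Delta_n}\rank\alpha_j+\sum_{\lambda_j\in\Delta_{-n}}\rank\alpha_j=2r$, and the two bounds yield equality. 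Hence the hypotheses of Lemma~\ref{B2MainLemma} hold, so for large $N$ the system $\mathcal E_N\cup\mathcal B_N$ is a Riesz basis for $\mathcal H$, and transporting it by the unitary $V$ used in the proof of Proposition~\ref{B2Lemma} shows that a finite modification of $\{w_{j,k}\}$ is a Riesz basis for $\mathbb H$. Since $(B_3)$ makes $\{w_{j,k}\}$ complete in $\mathbb H$ and this modification replaces $(2N+1)r$ vectors by the same number of vectors (again by $(B_2)$), the stability of Riesz bases under replacement of finitely many elements lets me conclude that $\{w_{j,k}\}$ is itself a basis for $\mathbb H$.

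The heart of the argument is then a short operator identity. By Lemma~\ref{FHeqLemma} the operator $\mathscr S:=\mathscr I+\mathscr F_H$, which is strictly positive by Lemma~\ref{InvAccLemma}, acts as the frame operator $\mathscr Sf=\sum_{j,k}(f\,|\,w_{j,k})w_{j,k}$, whereas Proposition~\ref{DirectFactTh} gives $\mathscr S=(\mathscr I+\mathscr K_q)^{-1}(\mathscr I+{\mathscr K_q}^*)^{-1}$, hence $\mathscr S^{-1}=(\mathscr I+{\mathscr K_q}^*)(\mathscr I+\mathscr K_q)$. Applying $\mathscr S$ to $\mathscr S^{-1}w_{i,l}$ and using the series for $\mathscr S$ gives $w_{i,l}=\sum_{j,k}(\mathscr S^{-1}w_{i,l}\,|\,w_{j,k})w_{j,k}$, so uniqueness of the expansion in the basis $\{w_{j,k}\}$ forces $(\mathscr S^{-1}w_{i,l}\,|\,w_{j,k})=\delta_{(i,l),(j,k)}$. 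Finally, since $\Phi_q(\lambda)=(\mathscr I+\mathscr K_q)\Phi_0(\lambda)$ by (\ref{PhiTransfOp}), the factorization of $\mathscr S^{-1}$ yields $(\mathscr S^{-1}w_{i,l}\,|\,w_{j,k})=((\mathscr I+\mathscr K_q)w_{i,l}\,|\,(\mathscr I+\mathscr K_q)w_{j,k})=(\Phi_q(\lambda_i)v_{i,l}\,|\,\Phi_q(\lambda_j)v_{j,k})$, which is exactly the desired orthonormality.

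The main obstacle is the basis claim of the second paragraph. In the direct problem the basis property of the analogous system was read off from the completeness of the spectral projectors of $T_q$ (Theorem~\ref{DirPropTh}), but here that completeness is precisely what we are trying to establish, so it may not be invoked; everything must instead be extracted from the abstract conditions $(B_1)$--$(B_4)$ through Lemma~\ref{B2MainLemma} and the Riesz-basis perturbation argument. One should also keep track of the scalar normalizing factor hidden in $\Phi_0$ (see (\ref{Phi0Def})); but because the same $w_{j,k}$ enter both the frame operator $\mathscr S$ and the relation $\Phi_q=(\mathscr I+\mathscr K_q)\Phi_0$, these factors match and do not affect the final orthonormality.
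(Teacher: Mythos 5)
Your proposal is correct and follows essentially the same route as the paper: reduce the claim to orthonormality of $\{\Phi_q(\lambda_j)v_{j,k}\}$, establish the Riesz-basis property of the exponential system from $(B_1)$--$(B_3)$ via Lemma~\ref{B2MainLemma} and Theorem~\ref{QuadrCloseLemma}, and extract orthonormality from the identity $\mathscr I+\mathscr F_H=(\mathscr I+\mathscr K_q)^{-1}(\mathscr I+\mathscr K_q^*)^{-1}$ combined with Lemma~\ref{FHeqLemma}. Your frame-operator computation with $\mathscr S^{-1}=(\mathscr I+\mathscr K_q^*)(\mathscr I+\mathscr K_q)$ is just the paper's use of the completeness relation $\slim\sum\widetilde P_{\mathfrak a,j}=\mathscr I$ conjugated by $\mathscr I+\mathscr K_q$, so the two arguments coincide.
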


\begin{proof}
As in the proof of Proposition~\ref{B2Lemma}, for an arbitrary $j\in\mathbb Z$ we denote by $\{v_{j,k}\}_{k=1}^{\rank\alpha_j}$ a system of pairwise orthogonal vectors in $\mathbb C^r$ such that
$$
\alpha_j=\sum_{k=1}^{\rank\alpha_j} (\ \cdot\ |v_{j,k})v_{j,k}.
$$
Then, taking into account (\ref{hatPdef}), we obtain that
$$
\widetilde P_{\mathfrak a,j}=\sum_{k=1}^{\rank\alpha_j} (\ \cdot\ |\boldsymbol f_{j,k})\boldsymbol f_{j,k},
$$
where $\boldsymbol f_{j,k}:=\Phi_q(\lambda_j)v_{j,k}$.

The present lemma will be proved if we show that the system
$$
\boldsymbol{\mathcal A}:=\{\boldsymbol f_{j,k}\ |\ j\in\mathbb Z,\ \ 1\le k\le\rank\alpha_j\}
$$
is a Riesz basis for the space $\mathbb H$. Indeed, from the equality (\ref{PjCompl}) we then obtain that for an arbitrary function $\boldsymbol f_{s,l}$,
$$
\sum_{j=-\infty}^\infty \sum_{k=1}^{\rank\alpha_j} (\boldsymbol f_{s,l}|\boldsymbol f_{j,k})\boldsymbol f_{j,k}=\boldsymbol f_{s,l},
$$
which implies the equalities $(\boldsymbol f_{j,k}|\boldsymbol f_{j,k})=1$ and $(\boldsymbol f_{s,l}|\boldsymbol f_{j,k})=0$ as $(s,l)\neq(j,k)$. Therefore, $\widetilde P_{\mathfrak a,s}\widetilde P_{\mathfrak a,j}=0$ as $s\neq j$, and thus $\{\widetilde P_{\mathfrak a,j}\}_{j\in\mathbb Z}$ is a system of pairwise orthogonal projectors.

Hence it remains to prove that $\boldsymbol{\mathcal A}$ is a Riesz basis for $\mathbb H$. To this end, it suffices to show that the system
\begin{equation}\label{InvAsystem}
\mathcal B_0:=\left\{\e^{\i\lambda_jt}v_{j,k}\ \big|\ j\in\mathbb Z,\ \ 1\le k\le\rank\alpha_j \right\}
\end{equation}
is a Riesz basis for the space $\mathcal H:=L_2((-1,1),\mathbb C^r)$. Indeed, introduce the unitary mapping $U:\mathcal H\to\mathbb H$ acting by the formula
$$
(Uf)(t)=\begin{pmatrix}f(-t),&
f(t)\end{pmatrix}^\top,\qquad t\in(0,1),
$$
and note that $U$ maps $\e^{\i\lambda_j x}v_{j,k}$ to $\Phi_0(\lambda_j)v_{j,k}$ for all $j\in\mathbb Z$, $1\le k\le\rank\alpha_j$. Therefore, if $\mathcal B_0$ is a Riesz basis for $\mathcal H$, then the system
$$
\boldsymbol{\mathcal A}_0:=\{\Phi_0(\lambda_j)v_{j,k}\ |\ j\in\mathbb Z,\
\ 1\le k\le\rank\alpha_j\}
$$
is a Riesz basis for $\mathbb H$. Then, since $\Phi_q(\lambda)=(\mathscr I+\mathscr K_q)\Phi_0(\lambda)$ and $\mathscr I+\mathscr K_q$ is a homeomorphism of the space $\mathbb H$, we obtain that $\boldsymbol{\mathcal A}$ remains a Riesz basis for $\mathbb H$, as desired.

Thus let us show that $\mathcal B_0$ is a Riesz basis for $\mathcal H$. We do this by applying Lemma~\ref{B2MainLemma} and Theorem~\ref{QuadrCloseLemma}. Firstly, observe that by virtue of the conditions $(B_1)$ and $(B_2)$, there exists $N_0\in\mathbb N$ such that for all $n\in\mathbb Z$, $|n|>N_0$,
\begin{equation}\label{InvBaseEq1}
\sum_{\lambda_j\in\Delta_n}\rank\alpha_j=r.
\end{equation}
Indeed, it follows from the condition $(B_1)$ that there exists $N_0\in\mathbb N$ such that
$$
\bigg\|I-\sum\limits_{\lambda_j\in\Delta_n}\alpha_j\bigg\|<1,\qquad |n|>N_0.
$$
Hence, $\sum_{\lambda_j\in\Delta_n}\rank \alpha_j\ge r$ as $|n|>N_0$. Moreover, by virtue of the condition $(B_2)$, $N_0$ can be taken so large that
$$
\sum\limits_{n=-N}^N\sum\limits_{\lambda_j\in\Delta_n}\rank \alpha_j=(2N+1)r
$$
for all natural $N\ge N_0$. Therefore,
$
\sum_{\lambda_j\in\Delta_n}\rank \alpha_j+\sum_{\lambda_j\in\Delta_{-n}}\rank \alpha_j=2r
$
as $|n|>N_0$, and thus we arrive at (\ref{InvBaseEq1}).

Now, since a sequence $\mathfrak a$ satisfies the condition $(B_1)$ and for all $n\in\mathbb Z$, $|n|>N_0$, the equality (\ref{InvBaseEq1}) holds true, we find that the conditions of Lemma~\ref{B2MainLemma} are satisfied. Therefore, from Lemma~\ref{B2MainLemma} we obtain that there exists a large enough natural $N>N_0$ such that the system $\mathcal E_N\cup\mathcal B_N$, where
$$
\mathcal E_N:=\left\{\frac1{\sqrt2}\e^{\i\pi nt}\epsilon_s\ \big|\ n\in\mathbb Z,\ |n|\le N,
\quad s=1,\ldots,r \right\}
$$
and
$$
\mathcal B_N:=\left\{\e^{\i\lambda_jt}v_{j,k}\ \big|\ \lambda_j\in\Delta_n,\ |n|>N,
\quad 1\le k\le\rank\alpha_j \right\},
$$
is a Riesz basis for the space $\mathcal H$.

Finally, observe that by virtue of the condition $(B_3)$, the system $\mathcal B_0$ is complete in $\mathcal H$. Moreover, since $\mathcal B_N\subset\mathcal B_0$ and, by virtue of the condition $(B_2)$, the finite systems
$$
\mathcal B_0\setminus\mathcal B_N=\left\{\e^{\i\lambda_jt}v_{j,k}\ \big|\ \lambda_j\in\Delta_n,\ |n|\le N,
\quad 1\le k\le\rank\alpha_j \right\}
$$
and $\mathcal E_N$ consist of the same number of elements, we find that $\mathcal B_0$ is quadratically close to $\mathcal E_N\cup\mathcal B_N$. Then it follows from Theorem~\ref{QuadrCloseLemma} that $\mathcal B_0$ remains a Riesz basis for $\mathcal H$, as desired.
\end{proof}

Now we are ready to prove Theorem~\ref{Th4}:

\begin{proofTh4}
Firstly, it follows from Lemma~\ref{InvAccLemma} that if a sequence $\mathfrak a$ satisfies the conditions $(B_1)$--$(B_4)$, $\mu:=\mu^{\mathfrak a}$ and $H:=H_\mu$, then $H\in\mathfrak H_p$. Thus, it only remains to prove that $\mathfrak a=\mathfrak a_q$ for $q=\Theta(H)$.

The proof of this claim repeats the technique that was suggested in \cite{sturm}. Namely, as in \cite{sturm,dirac1}, we observe that it is enough to prove the embedding
\begin{equation}\label{PrRel}
\Ran\widetilde P_{\mathfrak a,j}\subset\ker(T_q-\lambda_j\mathscr I),\qquad j\in\mathbb Z,
\end{equation}
where the operators $\widetilde P_{\mathfrak a,j}$ are given by (\ref{hatPdef}). Indeed, taking into account completeness of $\{\widetilde P_{\mathfrak a,j}\}_{j\in\mathbb Z}$, from (\ref{PrRel}) we immediately conclude that $\lambda_j=\lambda_j(q)$ for all $j\in\mathbb Z$, where $\lambda_j(q)$ are eigenvalues of $T_q$. Now, from this equality and from (\ref{PrRel}) we obtain that $P_{q,j}-\widetilde P_{\mathfrak a,j}\ge0$, $j\in\mathbb Z$, where $P_{q,j}$ are the orthogonal projectors of the operator $T_q$ (see Theorem~\ref{DirPropTh}). However, taking into account completeness of the systems $\{\widetilde P_{\mathfrak a,j}\}_{j\in\mathbb Z}$ and $\{P_{q,j}\}_{j\in\mathbb Z}$, we observe that $\sum_{j=-\infty}^\infty(P_{q,j}-\widetilde P_{\mathfrak a,j})=0$, and thus $P_{q,j}-\widetilde P_{\mathfrak a,j}=0$ for all $j\in\mathbb Z$. Therefore, recalling the representation (\ref{ProjForm}) for $P_{q,j}$, we find that
$$
\Phi_q(\lambda_j)\{\alpha_j(q)-\alpha_j\}\Phi_q^*(\lambda_j)=0,
\qquad j\in\mathbb Z,
$$
and taking into account (\ref{kerPhi}) we arrive at $\alpha_j=\alpha_j(q)$. Together with $\lambda_j=\lambda_j(q)$ this means that $\mathfrak a=\mathfrak a_{q}$, as desired.

Now let us prove (\ref{PrRel}). Firstly, from the definition (\ref{hatPdef}) of $\widetilde P_{\mathfrak a,j}$ we obtain that
$$
\Ran \widetilde P_{\mathfrak a,j}=\{\varphi_{q}(\cdot,\lambda_j)\alpha_jc\ | \ c\in\mathbb C^r\}.
$$
From the other side, it follows from Lemma~\ref{kerTLemma} that
$$
\ker (T_q-\lambda_j \mathscr I)=\{ \varphi_q(\cdot,\lambda_j)c \ |
\ a\varphi_q(1,\lambda_j)c=0 \}.
$$
Therefore, it suffices to show that $a\varphi_q(1,\lambda_j)\alpha_j=0$. The proof of this claim is technical and literally repeats the proof in \cite{dirac1}.
\end{proofTh4}

Finally, the proof of Theorem~\ref{Th1} is straightforward:

\begin{proofTh1}
Firstly, by virtue of Propositions~\ref{B1Lemma}, \ref{B2Lemma} and Corollaries~\ref{B3Lemma}, \ref{B4Lemma} we have that for all $q\in\mathfrak Q_p$, $p\ge1$, the spectral data $\mathfrak a_q$ of the operator $T_q$ satisfy the conditions $(B_1)$--$(B_4)$. This is the necessity part of Theorem~\ref{Th1}. Conversely, if a sequence $\mathfrak a$ satisfies the conditions $(B_1)$--$(B_4)$, $\mu:=\mu^{\mathfrak a}$ and $H:=H_\mu$, then it follows from Theorem~\ref{Th4} that $H\in\mathfrak H_p$ and $\mathfrak a=\mathfrak a_q$ for $q=\Theta(H)$. This is the sufficiency part of the theorem.
\end{proofTh1}

\subsection{Proof of Theorem~\ref{Th2}}

Thus it only remains to prove Theorem~\ref{Th2} stating that the spectral data of the operator $T_q$ determine the potential $q$ uniquely. The proof of this claim repeats the proof in \cite{dirac1}:

\begin{proofTh2}
Evidently, the theorem will be proved if we prove the implication $\mathfrak a_{q_1}=\mathfrak a_{q_2} \Rightarrow q_1=q_2$.
Therefore, let $q_1,q_2\in\mathfrak Q_p$, $p\ge1$, and assume that $\mathfrak a_{q_1}=\mathfrak a_{q_2}=:\mathfrak a$. Let us show that $q_1=q_2$.
Set $H:=H_\mu$, $\mu:=\mu^{\mathfrak a}$, and construct the function $F_H\in G_p(M_{2r})$ by the formula (\ref{FH}). Denote by $\mathscr F_H\in\mathscr G_p(M_{2r})$ an integral operator in $\mathbb H$ with kernel $F_H$. Since
$$
\mathscr I+\mathscr F_H=\slim\limits_{N\to\infty}\sum_{j=-N}^N \Phi_0(\lambda_j)\alpha_j\Phi_0^*(\lambda_j),
$$
by virtue of the equalities (\ref{PhiTransfOp}), (\ref{ResIdent}) and (\ref{ProjForm}) we find that
$$
\mathscr I+\mathscr F_H=
(\mathscr I+\mathscr K_{q_1})^{-1}(\mathscr I+{\mathscr K_{q_1}}^*)^{-1}
=(\mathscr I+\mathscr K_{q_2})^{-1}(\mathscr I+{\mathscr K_{q_2}}^*)^{-1},
$$
where $\mathscr K_{q_j}$ is an integral operator in $\mathbb H$ with kernel $K_{q_j}$, $j=1,2$. Since the operator $\mathscr I+\mathscr F_H$ may admit at most one factorization in $\mathscr G_p(M_{2r})$, we arrive at the equality
$$
\mathscr K_{q_1}=\mathscr K_{q_2}.
$$
Thus it remains to prove the implication $\mathscr K_{q_1}=\mathscr K_{q_2} \Rightarrow q_1=q_2$. By virtue of (\ref{varphiCauchyPr}) and (\ref{VarphiTrasfOp}), this can be obtained from the uniqueness theorem repeating the proof in \cite{dirac1}.
\end{proofTh2}

\subsection*{Acknowledgment}
The author is extremely grateful to his supervisor Ass. Prof. Yaroslav~Mykytyuk for countless discussions, valuable ideas and permanent attention to this work. The author also would like to thank Dr. Rostyslav~Hryniv for valuable suggestions in preparing this manuscript.

\appendix

\setcounter{section}{0}
\def\thesection{\Alph{section}}

\section{Spaces}\label{AppSpaces}

Here we introduce several spaces that are used in this paper.

For an arbitrary $p\ge1$ we denote by $G_p(M_r)$ the set of all measurable functions $K:[0,1]^2\to M_r$ such that for all $x$ and $t$ in $[0,1]$ the functions $K(x,\cdot)$ and $K(\cdot,t)$ belong to $L_p((0,1),M_r)$ and, moreover, the mappings
$$
[0,1]\ni x\mapsto K(x,\cdot)\in L_p((0,1),M_r), \qquad
[0,1]\ni t\mapsto K(\cdot,t)\in L_p((0,1),M_r)
$$
are continuous. The set $G_p(M_r)$ becomes a Banach space upon introducing the norm
$$
\|K\|_{G_p}=\max\left\{
\max\limits_{x\in[0,1]}\|K(x,\cdot)\|_{L_p},\
\max\limits_{t\in[0,1]}\|K(\cdot,t)\|_{L_p}\right\}.
$$

We denote by $\mathscr G_p(M_r)$ the set of all integral operators in $L_2((0,1),\mathbb C^r)$ with kernels $K\in G_p(M_r)$ and endow $\mathscr G_p(M_r)$ with the norm
$$
\|\mathscr K\|_{\mathscr G_p}:=\|K\|_{G_p},\qquad
\mathscr K\in\mathscr G_p(M_r).
$$
The space $\mathscr G_p(M_r)$ forms a subalgebra in the algebra $\mathscr B_\infty$ of compact operators in $L_2((0,1),\mathbb C^r)$.

We set
$$
\Omega:=\{(x,t) \ | \ 0\le t\le x\le 1\}, \qquad \Omega^-:=[0,1]^2\setminus\Omega,
$$
and write $G_p^+(M_r)$ for the set of all functions $K\in G_p(M_r)$ such that $K(x,t)=0$ a.e. in $\Omega^-$, and $G_p^-(M_r)$ for set of all $K\in G_p(M_r)$ such that $K(x,t)=0$ a.e. in $\Omega$. By $\mathscr G_p^\pm(M_r)$ we denote the subalgebras of $\mathscr G_p(M_r)$ consisting of all operators $\mathscr K\in\mathscr G_p(M_r)$ with kernels $K\in G_p^\pm(M_r)$.

Let $\mathscr I$ stand for the identity operator in $L_2((0,1),\mathbb C^r)$. Then the following lemma is established in \cite{sturm} for the case $p=2$. However, its generalization to the case of an arbitrary $p\ge1$ is straightforward:

\begin{lemma}\label{KinvLemma}
For all $p\ge1$, the mapping $\mathscr K\mapsto (\mathscr I+\mathscr K)^{-1}-\mathscr I$ acts continuously in $\mathscr G_p^+(M_r)$.
\end{lemma}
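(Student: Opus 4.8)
The plan is to realise $(\mathscr I+\mathscr K)^{-1}-\mathscr I$ as the sum of the Neumann series $\sum_{n\ge1}(-1)^n\mathscr K^n$ and to prove that this series converges absolutely \emph{in the norm of} $\mathscr G_p^+(M_r)$; continuity of the mapping will then follow from a local resolvent expansion. Since the statement for $p=2$ is recorded in \cite{sturm}, the task is to check that every ingredient survives for $p\ge1$.

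First I would record the algebraic structure: $\mathscr G_p^+(M_r)$ is a (non-unital) Banach algebra with submultiplicative norm. Indeed, if $\mathscr A,\mathscr B$ have kernels $A,B\in G_p^+(M_r)$, then $\mathscr A\mathscr B$ has kernel $N(x,t)=\int_t^x A(x,s)B(s,t)\,\d s$, so $N\in G_p^+(M_r)$; the generalised Minkowski inequality together with the embedding $L_p(0,1)\hookrightarrow L_1(0,1)$ gives $\|N(x,\cdot)\|_{L_p}\le\|A\|_{G_p}\|B\|_{G_p}$ and $\|N(\cdot,t)\|_{L_p}\le\|A\|_{G_p}\|B\|_{G_p}$, whence $\|\mathscr A\mathscr B\|_{\mathscr G_p}\le\|\mathscr A\|_{\mathscr G_p}\|\mathscr B\|_{\mathscr G_p}$. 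In particular every partial sum of the Neumann series lies in the complete space $\mathscr G_p^+(M_r)$, so it suffices to prove $\sum_{n}\|\mathscr K^n\|_{\mathscr G_p}<\infty$.

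The heart of the matter --- and the step I expect to be the main obstacle --- is this norm-summability, i.e. quasinilpotency of $\mathscr K$ in the $\mathscr G_p$-norm rather than merely in the operator norm, since the naive submultiplicative bound $\|\mathscr K^n\|_{\mathscr G_p}\le\|\mathscr K\|_{\mathscr G_p}^n$ is useless when $\|\mathscr K\|_{\mathscr G_p}\ge1$. I would exploit the Volterra (lower-triangular) support by passing to a scalar majorant. Writing $K^{(n)}$ for the kernel of $\mathscr K^n$ and $g_n(x):=\|K^{(n)}(x,\cdot)\|_{L_p}$, the identity $K^{(n)}(x,t)=\int_0^x K(x,s)K^{(n-1)}(s,t)\,\d s$ and Minkowski's inequality give the scalar Volterra recursion $g_n\le\mathcal K g_{n-1}$, where $(\mathcal K h)(x):=\int_0^x\|K(x,s)\|\,h(s)\,\d s$ and $g_1\le\|K\|_{G_p}$. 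The operator $\mathcal K$ is \emph{compact} on $C[0,1]$: its image is equicontinuous because $x\mapsto K(x,\cdot)\mathbf 1_{[0,x]}$ is continuous into $L_1(0,1)$, which is a direct consequence of the definition of $G_p$, so Arzel\`a--Ascoli applies. A compact Volterra operator is quasinilpotent, hence $\|\mathcal K^{n}\|^{1/n}\to0$ and $\sum_n\|g_n\|_\infty<\infty$. The same argument applied to $\tilde g_n(t):=\|K^{(n)}(\cdot,t)\|_{L_p}$, via the reversed Volterra operator $(\tilde{\mathcal K}h)(t):=\int_t^1\|K(s,t)\|\,h(s)\,\d s$, controls the other defining seminorm of $\|\cdot\|_{G_p}$. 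Together these give $\sum_n\|\mathscr K^n\|_{\mathscr G_p}<\infty$, so $\mathscr R:=\sum_{n\ge1}(-1)^n\mathscr K^n\in\mathscr G_p^+(M_r)$ and $(\mathscr I+\mathscr K)^{-1}=\mathscr I+\mathscr R$. This route uses only the $G_p$-continuity of the kernels and works verbatim for every $p\ge1$.

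Finally, for continuity at a point $\mathscr K_0$ with $(\mathscr I+\mathscr K_0)^{-1}=\mathscr I+\mathscr R_0$, I would factor $\mathscr I+\mathscr K=(\mathscr I+\mathscr K_0)(\mathscr I+\mathscr E)$ with $\mathscr E:=(\mathscr I+\mathscr R_0)(\mathscr K-\mathscr K_0)\in\mathscr G_p^+(M_r)$ and $\|\mathscr E\|_{\mathscr G_p}\le(1+\|\mathscr R_0\|_{\mathscr G_p})\|\mathscr K-\mathscr K_0\|_{\mathscr G_p}$. For $\mathscr K$ close enough to $\mathscr K_0$ one has $\|\mathscr E\|_{\mathscr G_p}<1$, so $(\mathscr I+\mathscr E)^{-1}=\mathscr I+\mathscr E'$ with $\mathscr E'=\sum_{k\ge1}(-\mathscr E)^k\in\mathscr G_p^+(M_r)$ and $\|\mathscr E'\|_{\mathscr G_p}\le\|\mathscr E\|_{\mathscr G_p}/(1-\|\mathscr E\|_{\mathscr G_p})$; multiplying out gives $\mathscr R_{\mathscr K}-\mathscr R_0=\mathscr E'(\mathscr I+\mathscr R_0)\in\mathscr G_p^+(M_r)$, whose norm tends to $0$ as $\mathscr K\to\mathscr K_0$. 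This establishes the asserted continuity.
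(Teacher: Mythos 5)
Your proposal is correct. Note first that the paper itself contains no proof of this lemma: it merely cites \cite{sturm} for the case $p=2$ and asserts that the generalization to $p\ge1$ is straightforward, so there is no argument of the author's to compare yours against step by step. What you supply is a genuine, self-contained proof, and it is the natural way to make ``straightforward'' precise: the submultiplicativity of the $G_p$-norm via Minkowski's integral inequality and the embedding $L_p(0,1)\hookrightarrow L_1(0,1)$ is exactly the point where the restriction to $p=2$ plays no role, and the passage to the scalar Volterra majorant $g_n(x)=\|K^{(n)}(x,\cdot)\|_{L_p}$ correctly reduces norm-summability of the Neumann series to quasinilpotency of a scalar Volterra operator on $C[0,1]$; the final resolvent-perturbation step for continuity is routine once the series is under control. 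The one place where you lean on an external fact is the assertion that a compact Volterra operator is quasinilpotent. This is true and standard --- it follows from Ringrose's theorem on the spectrum of a compact operator relative to a continuous nest of invariant subspaces, or more elementarily from the combinatorial estimate $\|\mathcal K^n\|\le\binom{n+m-1}{m-1}\,C\,\omega(1/m)^{n-m}$ obtained by partitioning $[0,1]$ into $m$ pieces and using the uniform integrability of the family $\{\|K(x,\cdot)\|\}_{x\in[0,1]}$ (which is compact in $L_1$ by the definition of $G_p$) --- but since it is the load-bearing step replacing the useless bound $\|\mathscr K^n\|_{\mathscr G_p}\le\|\mathscr K\|_{\mathscr G_p}^n$, a written-out version should either prove it or give a precise reference.
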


We denote by $\mathcal P$ the set of all polynomials over complex numbers and set
$$
\mathcal P^r:=\{(f_1,\ldots,f_r)^\top\ |\ f_j\in\mathcal P,\ j=1,\ldots,r\},\qquad r\in\mathbb N.
$$
We denote by $\mathcal S$ the Schwartz space of smooth functions $f\in C^\infty(\mathbb R)$ whose derivatives (including the function itself) decay at infinity faster than any power of $|x|^{-1}$, i.e.
$$
\mathcal S:=\{f\in C^\infty(\mathbb R)\ |\ x^\alpha \mathrm D^\beta f(x)\to0\ as\ |x|\to\infty,\quad
\alpha,\beta\in\mathbb N\cup\{0\}\}.
$$
Similarly, we set
$$
\mathcal S^r:=\{(f_1,\ldots,f_r)^\top\ |\ f_j\in\mathcal S,\ j=1,\ldots,r\},\qquad r\in\mathbb N.
$$

Also, we formulate here the following refined version of the Riemann--Lebesgue lemma that is mentioned, e.g., in \cite{trushzeros}:

\begin{lemma}\label{RefRiemannLebesgue}
Let $g\in L_1((-1,1),M_r)$. Then
$$
\lim\limits_{\mathbb C\owns\lambda\to\infty} \ \e^{-|\Im\lambda|}\int\limits_{-1}^1 \e^{\i\lambda t}g(t)\ \d t=0
$$
in the metric of the space $M_r$.
\end{lemma}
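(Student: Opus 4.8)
The plan is to upgrade the classical Riemann--Lebesgue lemma to complex argument by a density argument, the decisive observation being that the weight $\e^{-|\Im\lambda|}$ exactly compensates the growth of $\e^{\i\lambda t}$ across the fixed interval $[-1,1]$. Writing $\lambda=\sigma+\i\tau$ with $\tau:=\Im\lambda$, one has $|\e^{\i\lambda t}|=\e^{-\tau t}$, so for $|t|\le1$ the uniform bound
$$
\e^{-|\tau|}\,|\e^{\i\lambda t}|=\e^{-|\tau|-\tau t}\le1,\qquad t\in(-1,1),
$$
holds regardless of the sign or magnitude of $\tau$. This is the only place where the length of the interval enters, and it is what keeps the forthcoming approximation estimate uniform in $\lambda$.

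First I would prove the claim for a smooth representative $g\in C^1([-1,1],M_r)$ by integration by parts:
$$
\int_{-1}^1\e^{\i\lambda t}g(t)\ \d t
=\frac1{\i\lambda}\bigl(\e^{\i\lambda}g(1)-\e^{-\i\lambda}g(-1)\bigr)
-\frac1{\i\lambda}\int_{-1}^1\e^{\i\lambda t}g'(t)\ \d t.
$$
Multiplying through by $\e^{-|\tau|}$ and invoking the uniform bound together with $\e^{-|\tau|}|\e^{\pm\i\lambda}|\le1$, I see that the boundary term is $\Or(|\lambda|^{-1})$ and the remaining integral is bounded by $|\lambda|^{-1}\int_{-1}^1\|g'(t)\|\,\d t$; both tend to zero as $|\lambda|\to\infty$.

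Next I would pass to an arbitrary $g\in L_1((-1,1),M_r)$ by density of $C^1$ functions in $L_1$. Given $\varepsilon>0$, choose $g_\varepsilon\in C^1([-1,1],M_r)$ with $\|g-g_\varepsilon\|_{L_1}<\varepsilon$. The uniform bound then yields
$$
\e^{-|\tau|}\Bigl\|\int_{-1}^1\e^{\i\lambda t}\bigl(g(t)-g_\varepsilon(t)\bigr)\ \d t\Bigr\|
\le\int_{-1}^1\|g(t)-g_\varepsilon(t)\|\ \d t<\varepsilon
$$
for every $\lambda$, while the smooth part tends to zero by the first step; hence $\limsup_{\lambda\to\infty}\e^{-|\tau|}\|\int_{-1}^1\e^{\i\lambda t}g(t)\,\d t\|\le\varepsilon$, and letting $\varepsilon\to0$ finishes the argument.

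I do not expect any serious obstacle: the statement is a routine strengthening of the scalar Riemann--Lebesgue lemma, and the matrix-valued setting changes nothing since all estimates are taken in the operator norm of $M_r$. The single point requiring care is the compatibility between the weight $\e^{-|\Im\lambda|}$ and the fixed integration range $(-1,1)$, which is precisely what guarantees that the approximation error stays controlled uniformly in $\lambda$ rather than growing like $\e^{|\tau|}$.
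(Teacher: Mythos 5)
Your proof is correct: the uniform bound $\e^{-|\Im\lambda|}|\e^{\i\lambda t}|\le1$ for $t\in[-1,1]$ is indeed the crux, and the integration-by-parts plus $L_1$-density argument is complete and sound in the matrix-valued setting. Note that the paper itself gives no proof of this lemma --- it is stated in Appendix~A with a reference to the literature --- so your self-contained argument is the standard one that the cited source would supply, and there is nothing to fault in it.
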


\section{Factorization of integral operators}\label{AppFact}

In this appendix we recall some facts from the theory of factorization of integral operators (see \cite{MykFact1,MykFact2}), which are also mentioned in \cite{MykDirac,sturm,dirac1}. See also \cite{kreinvolterra} for the details.

We say that an operator $\mathscr I+\mathscr F$, where $\mathscr F\in \mathscr G_p(M_r)$, $p\ge1$, admits a factorization in $\mathscr G_p(M_r)$ if there exist the operators $\mathscr L^+\in \mathscr G_p^+(M_r)$ and $\mathscr L^-\in \mathscr G_p^-(M_r)$ such that
\begin{equation}\label{AppFactEq1}
\mathscr I+\mathscr F=(\mathscr I+\mathscr L^+)^{-1}(\mathscr I+\mathscr L^-)^{-1}.
\end{equation}
If $\mathscr F$ is self-adjoint, then $\mathscr L^-=(\mathscr L^+)^*$. This follows from uniqueness of $\mathscr L^\pm$ (see Theorem~\ref{FactTh1} below).

The following two theorems are established in \cite{MykFact1,MykFact2} for the case $p=2$. Their generalization for the case of an arbitrary $p\ge1$ is mentioned in \cite{MykDirac}.

\begin{theorem}\label{FactTh1}
If $\mathscr I+\mathscr F$, $\mathscr F\in \mathscr G_p(M_r)$, admits a factorization in $\mathscr G_p(M_r)$, then the corresponding operators $\mathscr L^\pm=\mathscr L^\pm(\mathscr F)$ in the representation (\ref{AppFactEq1}) are unique. Moreover, the set of operators $\mathscr F\in\mathscr G_p(M_r)$, such that $\mathscr I+\mathscr F$ admits a factorization, is open in $\mathscr G_p(M_r)$ and the mappings
$$
\mathscr G_p(M_r)\owns\mathscr F\mapsto\mathscr L^\pm(\mathscr F)\in\mathscr G_p(M_r)
$$
are continuous.
\end{theorem}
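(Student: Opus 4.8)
The plan is to treat the two assertions separately: first the uniqueness of $\mathscr L^\pm$, then openness together with continuity. Throughout I would exploit the direct-sum decomposition $\mathscr G_p(M_r)=\mathscr G_p^+(M_r)\oplus\mathscr G_p^-(M_r)$ (valid because the diagonal $\{x=t\}$ is a null set, so a kernel supported on both $\Omega$ and $\Omega^-$ vanishes a.e.), together with the facts that $\mathscr G_p^\pm(M_r)$ are subalgebras and that, by the Volterra nature of their elements, $\mathscr I+\mathscr L$ is invertible with $(\mathscr I+\mathscr L)^{-1}-\mathscr I\in\mathscr G_p^\pm(M_r)$ for every $\mathscr L\in\mathscr G_p^\pm(M_r)$ (Lemma~\ref{KinvLemma} and its $\mathscr G_p^-$-analogue). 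I write $P^\pm$ for the associated projections obtained by truncating a kernel to $\Omega$ and $\Omega^-$, which are bounded with $\|P^\pm\|\le1$.

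For uniqueness, suppose $(\mathscr I+\mathscr L_1^+)^{-1}(\mathscr I+\mathscr L_1^-)^{-1}=(\mathscr I+\mathscr L_2^+)^{-1}(\mathscr I+\mathscr L_2^-)^{-1}$. Rearranging gives
$$
(\mathscr I+\mathscr L_2^+)(\mathscr I+\mathscr L_1^+)^{-1}
=(\mathscr I+\mathscr L_2^-)^{-1}(\mathscr I+\mathscr L_1^-).
$$
Because $\mathscr G_p^+(M_r)$ is a subalgebra containing $(\mathscr I+\mathscr L_1^+)^{-1}-\mathscr I$, the left-hand side lies in $\mathscr I+\mathscr G_p^+(M_r)$; symmetrically the right-hand side lies in $\mathscr I+\mathscr G_p^-(M_r)$. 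By the triviality of $\mathscr G_p^+(M_r)\cap\mathscr G_p^-(M_r)$, both sides equal $\mathscr I$, whence $\mathscr L_1^+=\mathscr L_2^+$ and $\mathscr L_1^-=\mathscr L_2^-$.

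For openness and continuity, I fix $\mathscr F_0$ admitting a factorization $\mathscr I+\mathscr F_0=(\mathscr I+\mathscr L_0^+)^{-1}(\mathscr I+\mathscr L_0^-)^{-1}$ and, for $\mathscr F$ near $\mathscr F_0$, set
$$
\mathscr I+\mathscr G:=(\mathscr I+\mathscr L_0^+)(\mathscr I+\mathscr F)(\mathscr I+\mathscr L_0^-),
$$
so that $\mathscr G\in\mathscr G_p(M_r)$ depends continuously (indeed affinely) on $\mathscr F$ and $\mathscr G=0$ at $\mathscr F=\mathscr F_0$. It therefore suffices to factorize $\mathscr I+\mathscr G$ for small $\mathscr G$ and then reassemble. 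Seeking $\mathscr L^+\in\mathscr G_p^+(M_r)$ with $(\mathscr I+\mathscr L^+)(\mathscr I+\mathscr G)\in\mathscr I+\mathscr G_p^-(M_r)$, I apply $P^+$ and obtain the equivalent linear equation $T_{\mathscr G}\mathscr L^+=-P^+\mathscr G$, where $T_{\mathscr G}\mathscr L:=\mathscr L+P^+(\mathscr L\mathscr G)$ acts in $\mathscr G_p^+(M_r)$. Since $\|T_{\mathscr G}-\mathrm{id}\|\le\|P^+\|\,\|\mathscr G\|_{\mathscr G_p}\to0$, the operator $T_{\mathscr G}$ is invertible by a Neumann series once $\mathscr G$ is small, yielding a unique $\mathscr L^+=\mathscr L^+(\mathscr G)$ depending continuously on $\mathscr G$. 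The element $\mathscr N^-:=\mathscr G+\mathscr L^++\mathscr L^+\mathscr G$ then lies in $\mathscr G_p^-(M_r)$ by construction (indeed $P^+\mathscr N^-=T_{\mathscr G}\mathscr L^++P^+\mathscr G=0$), $\mathscr I+\mathscr N^-$ is invertible with $(\mathscr I+\mathscr N^-)^{-1}=:\mathscr I+\mathscr L^-\in\mathscr I+\mathscr G_p^-(M_r)$, and $\mathscr I+\mathscr G=(\mathscr I+\mathscr L^+)^{-1}(\mathscr I+\mathscr L^-)^{-1}$. Substituting back and collecting the triangular factors — using that products such as $(\mathscr I+\mathscr L^+(\mathscr G))(\mathscr I+\mathscr L_0^+)$ again lie in $\mathscr I+\mathscr G_p^+(M_r)$ — produces the factorization of $\mathscr I+\mathscr F$ with factors depending continuously on $\mathscr F$.

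The main obstacle I expect is purely technical: verifying that the truncation projections $P^\pm$ are bounded on $(\mathscr G_p(M_r),\|\cdot\|_{\mathscr G_p})$ and that the $\mathscr G_p$-norm is (quasi-)submultiplicative under composition, so that $T_{\mathscr G}$ really is a small perturbation of the identity and the Neumann series converges in the $\mathscr G_p$-topology rather than merely in operator norm. Restricting a kernel to $\Omega$ can only decrease the $L_p$-norms of its rows and columns, which gives $\|P^+\|\le1$ once one checks that the truncated kernel still defines an element of $G_p(M_r)$ (continuity of the row and column maps across the diagonal); the submultiplicativity needed to control $\mathscr L\mapsto P^+(\mathscr L\mathscr G)$ is standard but must be invoked. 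Everything else is routine manipulation within the Banach algebra $\mathscr G_p(M_r)$.
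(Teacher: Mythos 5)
The paper does not actually prove Theorem~\ref{FactTh1}: it is imported from \cite{MykFact1,MykFact2} (for $p=2$) and \cite{MykDirac} (for general $p\ge1$), so there is no in-paper argument to compare yours against. On its own merits, your proof is the standard Gohberg--Krein local-factorization argument and its outline is correct. The uniqueness step is clean: it uses only that $\mathscr G_p^\pm(M_r)$ are subalgebras closed under $\mathscr L\mapsto(\mathscr I+\mathscr L)^{-1}-\mathscr I$ (Lemma~\ref{KinvLemma} and its reflected analogue) and that $\mathscr G_p^+(M_r)\cap\mathscr G_p^-(M_r)=\{0\}$ because the diagonal is a null set. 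The openness/continuity step --- conjugating by the factors at $\mathscr F_0$ to reduce to small $\mathscr G$, solving $T_{\mathscr G}\mathscr L^+=-P^+\mathscr G$ by a Neumann series, and reassembling the triangular factors --- is also sound, and you correctly isolate the two facts that carry the whole argument: submultiplicativity of $\|\cdot\|_{G_p}$ under composition of kernels (which follows from Minkowski's integral inequality together with $\|f\|_{L_1}\le\|f\|_{L_p}$ on an interval of length one) and boundedness of the truncations $P^\pm$ on $G_p(M_r)$. The latter is the one place that deserves an explicit argument rather than a parenthetical remark: the bound $\|P^+\|\le1$ on the row and column $L_p$-norms is immediate, but the continuity of $x\mapsto K(x,\cdot)\chi_{[0,x]}$ into $L_p$ requires the uniform $p$-integrability of the family $\{K(x,\cdot)\}_{x\in[0,1]}$, which holds because that family is the continuous image of a compact set and hence compact in $L_p((0,1),M_r)$. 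With that detail supplied, your argument is a complete, self-contained proof of the statement the paper takes from the literature.
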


\begin{theorem}\label{FactTh2}
An operator $\mathscr I+\mathscr F$, $\mathscr F\in \mathscr G_p(M_r)$, admits a factorization in $\mathscr G_p(M_r)$ if and only if: (A) the operators $\mathscr I+\chi_a\mathscr F\chi_a$ have trivial kernels for all $a\in[0,1]$, where $\chi_a$ is an operator of multiplication by the indicator of the interval $[0,a]$,
i.e.
$$
(\chi_af)(x):=\left\{\begin{array}{cl} f(x), & x\in[0,a], \\ 0, & x\in(a,1]. \end{array}\right.
$$
If an operator $\mathscr F$ is self-adjoint, then the condition (A) is equivalent to positivity of $\mathscr I+\mathscr F$.
\end{theorem}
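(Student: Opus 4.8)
The plan is to treat the two equivalences separately, handling the elementary implications directly and reserving the real work for the construction of the factors. I would first record the \emph{necessity} of condition (A). Suppose $\mathscr I+\mathscr F=(\mathscr I+\mathscr L^+)^{-1}(\mathscr I+\mathscr L^-)^{-1}$ with $\mathscr L^\pm\in\mathscr G_p^\pm(M_r)$. Since $\mathscr G_p^+(M_r)$ is closed under the inversion $\mathscr K\mapsto(\mathscr I+\mathscr K)^{-1}-\mathscr I$ (Lemma~\ref{KinvLemma}), and the $\mathscr G_p^-(M_r)$ case is symmetric, the factors $A:=(\mathscr I+\mathscr L^+)^{-1}$ and $B:=(\mathscr I+\mathscr L^-)^{-1}$ are again of the form identity-plus-Volterra, so they satisfy $\chi_aA=\chi_aA\chi_a$ and $B\chi_a=\chi_aB\chi_a$ for every $a\in[0,1]$. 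Consequently
$$
\chi_a(\mathscr I+\mathscr F)\chi_a=\chi_aAB\chi_a=(\chi_aA\chi_a)(\chi_aB\chi_a),
$$
and each factor on the right is the identity plus a Volterra operator on $\Ran\chi_a=L_2((0,a),\mathbb C^r)$, hence boundedly invertible there, Volterra operators on a finite interval being quasinilpotent. The product is therefore invertible on $\Ran\chi_a$, so the compression $\chi_a(\mathscr I+\mathscr F)\chi_a$ has trivial kernel; since $\mathscr I+\chi_a\mathscr F\chi_a$ acts as the identity on $\Ran(\mathscr I-\chi_a)$ and coincides with $\chi_a(\mathscr I+\mathscr F)\chi_a$ on $\Ran\chi_a$, its kernel is trivial as well, which is (A).

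Next I would dispose of the self-adjoint equivalence, granting the main equivalence. If $\mathscr F=\mathscr F^*$ and $\mathscr I+\mathscr F>0$, then $\mathscr I+\mathscr F\ge c\mathscr I$ for some $c>0$ because $\mathscr F$ is compact, whence $\langle(\mathscr I+\chi_a\mathscr F\chi_a)f,f\rangle\ge c\|\chi_af\|^2$ forces a trivial kernel, giving (A). Conversely, (A) yields a factorization by the main part; by uniqueness of the factors (Theorem~\ref{FactTh1}) self-adjointness forces $\mathscr L^-=(\mathscr L^+)^*$, and then
$$
\mathscr I+\mathscr F=\bigl[(\mathscr I+\mathscr L^+)^*(\mathscr I+\mathscr L^+)\bigr]^{-1}>0,
$$
since $\mathscr I+\mathscr L^+$ is invertible. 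This closes the self-adjoint statement.

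The substantial part is \emph{sufficiency}: (A) implies the factorization. Here I would follow the continuous Gohberg--Krein scheme adapted to the algebra $\mathscr G_p(M_r)$ and the chain $\{\chi_a\}_{a\in[0,1]}$. Because $\mathscr F$ is compact, each $\mathscr I+\chi_a\mathscr F\chi_a$ is Fredholm of index zero, so (A) upgrades ``trivial kernel'' to ``boundedly invertible''; moreover the inverse stays in $\mathscr I+\mathscr G_p(M_r)$, and $a\mapsto\chi_a\mathscr F\chi_a$ is continuous in $\mathscr G_p(M_r)$ by the very definition of $G_p(M_r)$, i.e. the continuity of $x\mapsto K(x,\cdot)$ and $t\mapsto K(\cdot,t)$. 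Writing $\Gamma_a:=(\mathscr I+\chi_a\mathscr F\chi_a)^{-1}$ and $\gamma(\cdot,\cdot;a)$ for the kernel of $\Gamma_a-\mathscr I$, I would \emph{define} the candidate lower factor $\mathscr L^+$ from the diagonal-boundary values $\gamma(x,\cdot;x)$ of the section resolvents and set $\mathscr L^-$ to be the analogous upper-triangular object, or $(\mathscr L^+)^*$ in the self-adjoint case. The verification that these kernels lie in $G_p^\pm(M_r)$ and satisfy $\mathscr I+\mathscr F=(\mathscr I+\mathscr L^+)^{-1}(\mathscr I+\mathscr L^-)^{-1}$ is then carried out by differentiating the identity $(\mathscr I+\chi_a\mathscr F\chi_a)\Gamma_a=\mathscr I$ in the chain parameter $a$ and integrating the resulting first-order relation from $a=0$ to $a=1$, using $\Gamma_0=\mathscr I$ and $\Gamma_1=(\mathscr I+\mathscr F)^{-1}$.

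I expect the main obstacle to be exactly this regularity step for general $p\ge1$: showing that $a\mapsto\Gamma_a$ depends sufficiently smoothly on $a$ and that the diagonal values $\gamma(x,\cdot;x)$ assemble into genuine $G_p^\pm(M_r)$ kernels, with the inversion controlled uniformly along the chain. For $p=2$ the Hilbert--Schmidt structure makes the kernel estimates and the Stieltjes-type integration transparent, which is the case treated in \cite{MykFact1,MykFact2}; for general $p$ one must replace Hilbert--Schmidt bounds by the mixed $L_p$-continuity built into $G_p(M_r)$. A convenient way to organize the passage is a density/limiting argument: establish the factorization and the continuity of $\mathscr F\mapsto\mathscr L^\pm$ first on the dense subclass of continuous kernels, then extend to all of $\mathscr G_p(M_r)$ using the openness of the factorable set together with the continuity of the factorization map (Theorem~\ref{FactTh1}), which is how the transition from $p=2$ to $p\ge1$ indicated in \cite{MykDirac} is effected.
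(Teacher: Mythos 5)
First, a point of reference: the paper does not prove this statement at all --- Theorem~\ref{FactTh2} is quoted from \cite{MykFact1,MykFact2} (for $p=2$), with the extension to general $p\ge1$ attributed to \cite{MykDirac}. So there is no in-paper argument to compare yours against; your proposal has to stand on its own. The parts you work out in full are sound: the necessity of (A) via the triangularity identities $\chi_aA=\chi_aA\chi_a$, $B\chi_a=\chi_aB\chi_a$ and quasinilpotence of Volterra operators is correct, and the reduction of the self-adjoint clause to the main equivalence (using Theorem~\ref{FactTh1} to force $\mathscr L^-=(\mathscr L^+)^*$ and compactness of $\mathscr F$ to get a strictly positive lower bound) is also correct and is exactly the standard route.

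The gap is in the sufficiency direction, which is the entire content of the theorem. You outline the Gohberg--Krein chain construction but explicitly defer the step that makes it a proof --- showing that the section resolvents $\Gamma_a$ depend on $a$ well enough that the diagonal values $\gamma(x,\cdot\,;x)$ assemble into kernels lying in $G_p^\pm(M_r)$ --- and your proposed fallback does not close it. A density/limiting argument based on Theorem~\ref{FactTh1} is structurally the wrong tool here: that theorem says the set of factorable operators is \emph{open} and the factorization map is continuous \emph{on that set}. Openness lets you propagate factorability to a neighbourhood of an operator already known to be factorable; it does not let you conclude that a limit of factorable operators $\mathscr F_n$ (with continuous kernels, each satisfying (A)) is itself factorable --- the factorable set is open, not closed, and without a uniform bound on $\|\mathscr L_n^\pm\|_{\mathscr G_p}$ along the approximating sequence the factors need not converge. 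Such a uniform bound is precisely the quantitative content of condition (A) for the limit operator $\mathscr F$, and extracting it is the actual work of the theorem. So either the chain construction must be carried out directly in the $G_p$ mixed-continuity framework (which is what \cite{MykFact1,MykFact2} do for $p=2$), or the limiting argument must be supplemented by an a priori estimate on the factors in terms of $\sup_a\|(\mathscr I+\chi_a\mathscr F\chi_a)^{-1}\|$; as written, the proposal asserts neither.
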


Now we are interested in a connection between Krein's accelerants (see Definition~\ref{Def2}) and factorization of some integral operators.

Let $H\in L_p((-1,1),M_r)$, $p\ge1$, be an arbitrary function such that $H(-x)=H(x)^*$ for almost all $x\in(-1,1)$.
Denote by $\mathscr H$ an integral operator in $L_2((0,1),\mathbb C^r)$ acting by the formula
$$
(\mathscr Hf)(x):=\int\limits_0^1 H(x-t)f(t)\ \d t,\qquad x\in(0,1).
$$
Now set
$$
\mathbb H:=L_2((0,1),\mathbb C^r)\times L_2((0,1),\mathbb C^r).
$$
Let $\mathcal I$ stand for the identity operator in $L_2((0,1),\mathbb C^r)$ and $\mathscr I$ for the identity operator in $\mathbb H$. Then the following lemma holds true:

\begin{lemma}\label{FactLemma1}
Let $H\in L_p((-1,1),M_r)$, $H(-x)=H(x)^*$ for almost all $x\in(-1,1)$, and let $\mathscr F_H\in\mathscr G_p(M_{2r})$ be an integral operator in $\mathbb H$ with kernel $F_H\in G_p(M_{2r})$ taking the form
$$
F_H(x,t)=\frac{1}{2}\begin{pmatrix}H\left(\frac{x-t}{2}\right)&
H\left(\frac{x+t}{2}\right)\\
H\left(-\frac{x+t}{2}\right)&
H\left(-\frac{x-t}{2}\right)\end{pmatrix},\qquad 0\le x,t\le1.
$$
Then the following statements are equivalent:
\begin{itemize}
\item[(i)]the operator $\mathcal I+\mathscr H$ admits a factorization in $\mathscr G_p(M_r)$;
\item[(ii)]the operator $\mathscr I+\mathscr F_H$ admits a factorization in $\mathscr G_p(M_{2r})$;
\item[(iii)]the function $H$ is an accelerant, i.e. $H\in\mathfrak H_p$;
\item[(iv)]the Krein equation
\begin{equation}\label{AppKreinEq}
R(x,t)+H(x-t)+\int\limits_0^x R(x,s)H(x-s)\ \d s=0,\qquad (x,t)\in\Omega,
\end{equation}
is solvable in $G_p^+(M_r)$.
\end{itemize}
\end{lemma}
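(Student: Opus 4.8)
The plan is to treat (i) as a hub and show that each of (ii), (iii), (iv) is equivalent to it, leaning on Theorems~\ref{FactTh1} and \ref{FactTh2} throughout. The two elementary facts I would record first are that both operators are self-adjoint and that they are unitarily equivalent. Self-adjointness follows from the hypothesis $H(-x)=H(x)^*$: the kernel of $\mathscr H^*$ is $H(t-x)^*=H(x-t)$, so $\mathscr H^*=\mathscr H$, and a direct check of the block kernel gives $F_H(t,x)^*=F_H(x,t)$, whence $\mathscr F_H^*=\mathscr F_H$. For the unitary equivalence I would introduce $V\colon L_2((0,1),\mathbb C^r)\to\mathbb H$, $(Vf)(u)=\tfrac1{\sqrt2}\bigl(f(\tfrac{1+u}2),f(\tfrac{1-u}2)\bigr)^\top$, and verify by a straightforward substitution that $\mathscr I+\mathscr F_H=V(\mathcal I+\mathscr H)V^{-1}$.

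With these in hand, (i)$\Leftrightarrow$(ii) becomes painless and avoids the triangular structure altogether: by the self-adjoint part of Theorem~\ref{FactTh2}, $\mathcal I+\mathscr H$ admits a factorization iff $\mathcal I+\mathscr H>0$ and $\mathscr I+\mathscr F_H$ admits one iff $\mathscr I+\mathscr F_H>0$, and these two positivity conditions coincide because $V$ is unitary. For (i)$\Leftrightarrow$(iii) I would observe that, for $f$ supported in $(0,a)$, the operator $\mathcal I+\chi_a\mathscr H\chi_a$ acts by $f(x)+\int_0^a H(x-t)f(t)\,\d t$, and that its kernel on $L_2((0,1),\mathbb C^r)$ is trivial precisely when it is trivial on $L_2((0,a),\mathbb C^r)$. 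Hence Definition~\ref{Def2} is exactly the statement that $\mathcal I+\chi_a\mathscr H\chi_a$ has trivial kernel for every $a\in(0,1]$, i.e. condition~(A) of Theorem~\ref{FactTh2} for $\mathscr H$; that theorem identifies (A) with the existence of a factorization, giving (i).

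The substantive part is (i)$\Leftrightarrow$(iv). For the forward implication I would start from a factorization $\mathcal I+\mathscr H=(\mathcal I+\mathscr L^+)^{-1}(\mathcal I+\mathscr L^-)^{-1}$; by Theorem~\ref{FactTh1} and self-adjointness $\mathscr L^-=(\mathscr L^+)^*$ with $\mathscr L^+\in\mathscr G_p^+(M_r)$, whose kernel $R$ lies in $G_p^+(M_r)$. Rewriting this as $(\mathcal I+\mathscr L^+)(\mathcal I+\mathscr H)=(\mathcal I+(\mathscr L^+)^*)^{-1}$, the right-hand side is an operator $\mathcal I+\mathscr M$ with $\mathscr M\in\mathscr G_p^-(M_r)$; computing the kernel of the left-hand side and demanding that it vanish on $\Omega$ forces $R$ to satisfy the Krein equation~(\ref{AppKreinEq}), so (iv) holds. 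For the converse I would take a solution $R\in G_p^+(M_r)$ of~(\ref{AppKreinEq}), let $\mathscr L^+$ be the operator with kernel $R$, and check that $(\mathcal I+\mathscr L^+)(\mathcal I+\mathscr H)(\mathcal I+(\mathscr L^+)^*)=\mathcal I$: the Krein equation makes $(\mathcal I+\mathscr L^+)(\mathcal I+\mathscr H)$ upper triangular, so the full product has the form $\mathcal I+\mathscr N$ with $\mathscr N\in\mathscr G_p^-(M_r)$; being also self-adjoint, $\mathscr N$ is then a self-adjoint integral operator whose kernel is supported on $\Omega^-$, and such an operator must vanish, whence the factorization (i) follows. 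The main obstacle I anticipate is precisely this converse: executing the triangular algebra in $\mathscr G_p(M_r)$ carefully, invoking the fact that a self-adjoint operator with kernel supported on $\Omega^-$ is zero, and confirming (via Theorem~\ref{FactTh1}) that the $R$ produced is the unique element of $G_p^+(M_r)$ solving~(\ref{AppKreinEq}).
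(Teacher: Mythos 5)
Your proposal is correct, and for the parts the paper actually argues it follows the same route: the equivalence (i)$\Leftrightarrow$(ii) is established exactly as you do it, via self-adjointness, the positivity criterion of Theorem~\ref{FactTh2}, and the unitary $V$ intertwining $\mathcal I+\mathscr H$ with $\mathscr I+\mathscr F_H$. For (iii) the paper tersely says the equivalence with (ii) ``obviously follows from Theorem~\ref{FactTh2} and Definition~\ref{Def2}''; your version, which reads Definition~\ref{Def2} literally as condition~(A) of Theorem~\ref{FactTh2} applied to $\mathcal I+\mathscr H$ (after noting that any element of $\ker(\mathcal I+\chi_a\mathscr H\chi_a)$ must vanish on $(a,1]$), is the cleaner way to say the same thing and links (iii) directly to (i).

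The genuine divergence is in (iv). The paper does not prove the equivalence with the solvability of the Krein equation at all --- it cites \cite{MykDirac} --- whereas you supply the standard triangular-algebra argument: extract $R$ as the kernel of $\mathscr L^+$ from the factorization and read off the Krein equation from the requirement that the kernel of $(\mathcal I+\mathscr L^+)(\mathcal I+\mathscr H)-\mathcal I$ vanish on $\Omega$; conversely, use a solution $R\in G_p^+(M_r)$ to show $(\mathcal I+\mathscr L^+)(\mathcal I+\mathscr H)(\mathcal I+(\mathscr L^+)^*)=\mathcal I+\mathscr N$ with $\mathscr N\in\mathscr G_p^-(M_r)$ self-adjoint, hence zero. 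This argument is sound (the invertibility of $\mathcal I+\mathscr L^+$ and the stability of $\mathscr G_p^\pm$ under the operations you use are covered by Lemma~\ref{KinvLemma} and the subalgebra structure, and a self-adjoint kernel supported a.e. in $\Omega^-$ is indeed a.e. zero), and it makes the lemma self-contained where the paper is not. Two small remarks: your kernel computation produces the integral term $\int_0^x R(x,s)H(s-t)\,\d s$, whereas (\ref{AppKreinEq}) as printed has $H(x-s)$ --- the former is the classical Krein equation and is what the intended statement must be; and in the converse direction you should add a word confirming that the resulting factors lie in $\mathscr G_p^{\pm}(M_r)$ so that the factorization is one ``in $\mathscr G_p(M_r)$'' in the sense of (\ref{AppFactEq1}), which is immediate since $R\in G_p^+(M_r)$ by hypothesis. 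Neither point is a gap in substance.
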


\begin{proof}
Let us establish the equivalence of {\it (i)} and {\it (ii)}. Firstly, observe that both operators $\mathcal I+\mathscr H$ and $\mathscr I+\mathscr F_H$ are self-adjoint, and therefore both of them admit a factorization if and only if they are positive. Now consider the unitary transformation $V:L_2((0,1),\mathbb C^r)\to\mathbb H$ given by the formula
$$
(Vf)(t):=\frac1{\sqrt2}\begin{pmatrix}f\left(\frac{1+u}2\right)\\
f\left(\frac{1-u}2\right)\end{pmatrix},\qquad f\in L_2((0,1),\mathbb C^r),
$$
and verify that
$$
\mathscr I+\mathscr F_H=V(\mathcal I+\mathscr H)V^{-1}.
$$
Therefore, the operators $\mathcal I+\mathscr H$ and $\mathscr I+\mathscr F_H$ are unitarily equivalent, and hence $\mathcal I+\mathscr H>0$ if and only if $\mathscr I+\mathscr F_H>0$. Thus the equivalence of {\it (i)} and {\it (ii)} follows.

The equivalence of {\it (ii)} and {\it (iii)} obviously follows from Theorem~\ref{FactTh2} and Definition~\ref{Def2}. Finally, the equivalence of {\it (ii)} and {\it (iv)} is proved in \cite{MykDirac}.
\end{proof}

For an arbitrary accelerant $H\in\mathfrak H_p$, a solution of the Krein equation (\ref{AppKreinEq}) is unique, and we denote it by $R_H$. The mapping $\mathfrak H_p\owns H\mapsto R_H\in G_p^+(M_r)$ is continuous (see \cite{MykDirac}).

\section{Riesz bases}\label{AppRiesz}

Here we state some facts from the theory of Riesz bases.

\begin{definition}
We say that two bases $(x_n)_{n\in\mathbb N}$ and $(y_n)_{n\in\mathbb N}$ in a Banach space $X$ are equivalent if there exists a bounded and boundedly invertible operator $T:X\to X$ such that $Tx_n=y_n$ for all $n\in\mathbb N$.
A basis for a Hilbert space $\mathcal H$ is called a Riesz basis if it is equivalent to some orthonormal basis for $\mathcal H$.
\end{definition}

We set
$$
\mathcal H:=L_2((-1,1),\mathbb C^r)
$$
and endow $\mathcal H$ with the inner product
$$
(f|g)_{\mathcal H}:=\int\limits_{-1}^1 (f(t)|g(t))_{\mathbb C^r}\ \d t,
\qquad f,g\in\mathcal H.
$$
The set $\mathcal H$ with the inner product $(\cdot|\cdot)_{\mathcal H}$ becomes a Hilbert space. Throughout this appendix we denote by $\widehat f$ the Fourier transform in $\mathcal H$ given by
$$
\widehat f(n):=\frac1{\sqrt2}\int\limits_{-1}^1 \e^{-\i\pi nt}f(t)\ \d t,
\qquad n\in\mathbb Z,\ \ f\in\mathcal H.
$$

We start from a certain analogue of well-known Kadec's $1/4$-theorem (see, e.g., \cite[Chapter 1]{young}).
Let $\xi:=(\xi_n)_{n\in\mathbb Z}$ be a non-decreasing sequence of real numbers such that
$$
\lim\limits_{n\to+\infty}\xi_n=+\infty,\qquad \lim\limits_{n\to-\infty}\xi_n=-\infty,
$$
and let $v:=(v_n)_{n\in\mathbb Z}$ be a sequence of non-zero vectors in $\mathbb C^r$.
Consider the system of functions
$$
\mathscr E(\xi,v):=\{\e^{\i\xi_nt}v_n\ |\ n\in\mathbb Z\}
$$
in the space $\mathcal H$. We are interested in finding conditions guaranteeing  that the system $\mathscr E:=\mathscr E(\xi,v)$ forms a Riesz basis for the space $\mathcal H$.

\begin{definition}
We say that the system $\mathscr E$ enjoys the condition $(R_0)$ if for all $n\in\mathbb Z$ the numbers $\xi_{n,k}:=\xi_{nr+k}$, $k=1,\ldots,r$, belong to the interval
$$
\Delta_n:=\left(\pi n-\frac{\pi}{2},\pi n+\frac{\pi}{2}\right]
$$
and the vectors $v_{n,k}:=v_{nr+k}$, $k=1,\ldots,r$, form a basis for $\mathbb C^r$.
\end{definition}

With an arbitrary system $\mathscr E$ enjoying the condition $(R_0)$ we associate two sequences $(A_n)_{n\in\mathbb Z}$ and $(B_n)_{n\in\mathbb Z}$ of bounded linear operators in $\mathbb C^r$ acting by the formulae
$$
A_n v_{n,k}:=\xi_{n,k}v_{n,k},\qquad B_n \epsilon_k:=v_{n,k},\qquad k=1,\ldots,r,
$$
where $(\epsilon_k)_{k=1}^r$ is a standard orthonormal basis for $\mathbb C^r$.

\begin{definition}
We say that the system $\mathscr E$ enjoys the condition $(R_1)$ if it enjoys the condition $(R_0)$ and, moreover,
$$
\sup\limits_{n\in\mathbb Z}\|A_n-\pi nI\|<\ln2
$$
and
\begin{equation}\label{KadecVcond}
\sup\limits_{n\in\mathbb Z}(\|B_n\|+\|{B_n}^{-1}\|)<\infty.
\end{equation}
\end{definition}

\begin{theorem}\label{KadecTh}
If the system $\mathscr E$ enjoys the condition $(R_1)$, then it forms a Riesz basis for the space $\mathcal H$.
\end{theorem}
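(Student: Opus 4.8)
The plan is to produce a bounded, boundedly invertible operator $T$ on $\mathcal H$ that carries the standard orthonormal basis $\{\frac1{\sqrt2}\e^{\i\pi nt}\epsilon_k\mid n\in\mathbb Z,\ 1\le k\le r\}$ onto $\mathscr E$ (up to a harmless scalar factor $\sqrt2$); by the definition of a Riesz basis this is exactly what must be shown. I would build $T$ in two stages matching the two clauses of $(R_1)$: one stage absorbing the vectorial data $v_{n,k}$ through the operators $B_n$, and one stage absorbing the frequency shifts $\xi_{n,k}\mapsto\pi n$ through the operators $A_n$. Here I use the reindexing of $(R_0)$ so that the double index $(n,k)$ labelling the orthonormal basis matches the single index $nr+k$ labelling $\mathscr E$.

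For the vectorial stage, define $\mathcal B$ on $\mathcal H$ by $(\mathcal Bf)(t):=\sum_n\frac1{\sqrt2}\e^{\i\pi nt}B_n\widehat f(n)$, where $\widehat f(n)\in\mathbb C^r$ are the vector Fourier coefficients of $f$. By Parseval together with the bound \eqref{KadecVcond}, the operator $\mathcal B$ is bounded with bounded inverse $(\mathcal B^{-1}f)(t)=\sum_n\frac1{\sqrt2}\e^{\i\pi nt}B_n^{-1}\widehat f(n)$, and by the definition $B_n\epsilon_k=v_{n,k}$ it sends $\frac1{\sqrt2}\e^{\i\pi nt}\epsilon_k$ to $\frac1{\sqrt2}\e^{\i\pi nt}v_{n,k}$. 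Hence the intermediate system $\{\frac1{\sqrt2}\e^{\i\pi nt}v_{n,k}\}$ is already a Riesz basis for $\mathcal H$, and only the frequencies remain to be adjusted.

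For the frequency stage, set $D_n:=A_n-\pi nI$, so that $\sup_n\|D_n\|<\ln2$ by $(R_1)$, and observe that $D_n v_{n,k}=(\xi_{n,k}-\pi n)v_{n,k}$, whence $\e^{\i D_nt}v_{n,k}=\e^{\i(\xi_{n,k}-\pi n)t}v_{n,k}$. I would then introduce $T:=\mathscr I+\mathscr S$, where $\mathscr I$ is the identity of $\mathcal H$ and $(\mathscr Sf)(t):=\sum_n\frac1{\sqrt2}\e^{\i\pi nt}(\e^{\i D_nt}-I)\widehat f(n)$, and verify that $T$ maps $\frac1{\sqrt2}\e^{\i\pi nt}v_{n,k}$ onto $\frac1{\sqrt2}\e^{\i\xi_{n,k}t}v_{n,k}$. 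The composition $T\mathcal B$ then carries the orthonormal basis onto a scalar multiple of $\mathscr E$, so the theorem reduces to showing that $T$ is boundedly invertible, for which it suffices to prove $\|\mathscr S\|<1$.

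This last estimate is the heart of the matter, and it is where the threshold $\ln2$ enters. For $|t|\le1$ one has the uniform pointwise bound $\|\e^{\i D_nt}-I\|\le\e^{\|D_n\|\,|t|}-1\le\e^{\|D_n\|}-1<\e^{\ln2}-1=1$. If all the $D_n$ coincided, $\mathscr S$ would be multiplication by the matrix function $\e^{\i Dt}-I$ and this pointwise bound would give $\|\mathscr S\|<1$ at once. The genuine obstacle is that $D_n$ depends on $n$, so $\mathscr S$ is \emph{not} a multiplication operator: in $\|\mathscr Sf\|^2=\int_{-1}^1\|\sum_n\frac1{\sqrt2}\e^{\i\pi nt}(\e^{\i D_nt}-I)\widehat f(n)\|^2\,\d t$ the off-diagonal terms do not cancel, and the pointwise bound alone controls only the diagonal. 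Handling these cross terms uniformly in $n$ is precisely the operator-valued analogue of the delicate summation in Kadec's theorem: one expands $\e^{\i D_nt}-I$ in the trigonometric system on $(-1,1)$ and estimates the resulting banded operator, the pointwise bound being what keeps the total contribution below $1$. I expect this balancing of the diagonal against the off-diagonal part to be the main technical step, and it is this cruder, matrix-adapted argument that forces the constant $\ln2$ rather than the sharp scalar value $\pi/4$.
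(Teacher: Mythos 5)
Your reduction coincides exactly with the paper's: your $\mathcal B$ is its operator $B$, your $T=\mathscr I+\mathscr S$ is its $S_0$, and the composition $T\mathcal B$ is its $S$; the split into a vectorial stage controlled by (\ref{KadecVcond}) and a frequency stage reduced to proving $\|\mathscr S\|<1$ is all correct. The genuine gap is that you never prove $\|\mathscr S\|<1$. You rightly call it ``the heart of the matter,'' but then only \emph{conjecture} that an expansion of $\e^{\i D_nt}-I$ in the trigonometric system on $(-1,1)$ together with a banded-operator estimate will close the argument. That is the route of the classical scalar Kadec theorem (and it is what yields the sharp constant $1/4$ there), but you do not carry it out, its matrix-valued version is not obviously routine, and it is not where the threshold $\ln 2$ actually comes from. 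As written, the decisive estimate is missing.

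The paper's device is far more elementary and sidesteps the off-diagonal bookkeeping you worry about: expand $\e^{\i D_n t}-I=\sum_{k\ge1}(\i D_n t)^k/k!$ as a power series in $t$, interchange the sums over $n$ and $k$, and apply the triangle inequality only in $k$. For each fixed $k$ the inner sum is $t^k\sum_n\e^{\i\pi nt}D_n^k\widehat f(n)$; the scalar factor $t^k$ has modulus at most $1$ on $(-1,1)$ and can be discarded, after which the remaining sum over $n$ is an exactly orthogonal expansion, so Parseval gives its norm as $\sqrt2\bigl(\sum_n\|D_n^k\widehat f(n)\|^2\bigr)^{1/2}\le\sqrt2\,\delta^k\|f\|$ with $\delta:=\sup_n\|D_n\|$. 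Summing over $k$ yields $\|\mathscr S\|\le\e^{\delta}-1$, which is $<1$ precisely because $\delta<\ln2$; this identity $\e^{\ln 2}-1=1$, not a cruder version of the Kadec summation, is the origin of the constant. In other words, your cross terms do cancel, power by power, once the expansion is taken in powers of $t$ rather than in the trigonometric system. Until you supply this (or complete your own sketch), the proof is incomplete.
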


\begin{proof}
Denote by $\mathscr E_0$ an orthonormal basis in the space $\mathcal H$ given by
$$
\mathscr E_0:=\left\{\frac1{\sqrt2}\e^{\i\pi nt}\epsilon_k\ |\ n\in\mathbb Z,\ k=1,\ldots,r\right\}.
$$
The present theorem will be proved if we show that there exists a bounded and boundedly invertible operator $S:\mathcal H\to\mathcal H$ that maps $\frac1{\sqrt2}\e^{\i\pi nt}\epsilon_k$ to $\e^{\i\xi_{n,k}t}v_{n,k}$ for all $n\in\mathbb Z$, $k=1,\ldots,r$.

To this end, consider the operators $S$ and $S_0$ that are defined on the linear span $\lin\mathscr E_0$ by the formulae
$$
(Sf)(t)=\sum_{n=-\infty}^\infty \e^{\i A_n t}B_n\widehat f(n),\qquad
(S_0f)(t)=\frac1{\sqrt2}\sum_{n=-\infty}^\infty \e^{\i A_n t}\widehat f(n),
$$
$f\in\lin\mathscr E_0$. Observe that
$$
S\left(\frac1{\sqrt2}\e^{\i\pi nt}\epsilon_k\right)=\e^{\i A_n t}B_n\epsilon_k
=\e^{\i A_n t}v_{n,k}=\e^{\i\xi_{n,k}t}v_{n,k}
$$
for all $n\in\mathbb Z$, $k=1,\ldots,r$, and that the equality $S=S_0B$ takes place, where $B:\mathcal H\to\mathcal H$ acts by the formula
$$
(\widehat{Bf})(n)=\sqrt2 B_n\widehat f(n),\qquad n\in\mathbb Z.
$$
Since continuity and invertibility of the operator $B$ follows from the condition (\ref{KadecVcond}), it suffices to show that the operator
$$
\mathcal B f:=S_0f-f,\qquad f\in\lin\mathscr E_0,
$$
has the norm less than 1.

Set $\widetilde A_n:=A_n-\pi nI$, $n\in\mathbb Z$. Since
$$
\e^{\i A_nt}-\e^{\i\pi ntI}=\e^{\i\pi nt}(\e^{\i\widetilde A_nt}-I)=
\e^{\i\pi nt}\sum_{k=1}^{\infty}\frac{(\i\widetilde A_nt)^k}{k!}
$$
and
$$
f(t)=\frac1{\sqrt2}\sum_{n=-\infty}^\infty \e^{\i\pi nt}\widehat f(n),
$$
we find that
\begin{align*}
\sqrt2\|S_0f-f\|_{\mathcal H}=\left\| \sum_{n=-\infty}^\infty
\e^{\i\pi nt}(\e^{\i\widetilde A_nt}-I)\widehat f(n) \right\|_{\mathcal H}
\\
=
\left\| \sum_{n=-\infty}^\infty \sum_{k=1}^\infty
\e^{\i\pi nt}\frac{(\i \widetilde A_n t)^k}{k!}\widehat f(n) \right\|_{\mathcal H}
\le
\sum_{k=1}^\infty \frac{1}{k!} \left\| \sum_{n=-\infty}^\infty \e^{\i\pi nt}\widetilde A_n^k\widehat f(n)
\right\|_{\mathcal H}
\\
\le
\sum_{k=1}^\infty \frac{\sqrt2}{k!} \left( \sum_{n=-\infty}^\infty
\|\widetilde A_n^k\widehat f(n)\|^2 \right)^{1/2}
\le
\sqrt2\sum_{k=1}^\infty \frac{\delta^k}{k!} \|f\|_{\mathcal H},
\end{align*}
where $\delta:=\sup_{n\in\mathbb Z}\|\widetilde A_n\|<\ln2$. Therefore, $\|S_0f-f\|\le(\e^\delta-1)\|f\|$, i.e. $\|S_0-\mathscr I\|\le \e^\delta-1<1$, as desired.
\end{proof}


\begin{definition}
Two sequences of vectors $(f_n)_{n\in\mathbb Z}$ and $(g_n)_{n\in\mathbb Z}$ in a Hilbert space are said to be quadratically close if
$$
\sum_{n=-\infty}^\infty \|f_n-g_n\|^2<\infty.
$$
\end{definition}
The following theorem is a simple consequence of Theorem~15 in \cite[Chapter 1]{young}:

\begin{theorem}\label{QuadrCloseLemma}
Let $(f_n)_{n\in\mathbb Z}$ be a Riesz basis for $\mathcal H$. If $(g_n)_{n\in\mathbb Z}$ is complete and quadratically close to $(f_n)_{n\in\mathbb Z}$, then $(g_n)_{n\in\mathbb Z}$ is a Riesz basis for $\mathcal H$.
\end{theorem}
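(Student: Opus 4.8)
The plan is to reduce the statement to the case of an orthonormal basis and then realize the map $f_n\mapsto g_n$ as a compact perturbation of the identity, so that the Fredholm alternative converts the completeness hypothesis into bounded invertibility.

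First I would invoke the definition of a Riesz basis: there exist a bounded and boundedly invertible operator $T\colon\mathcal H\to\mathcal H$ and an orthonormal basis $(e_n)_{n\in\mathbb Z}$ of $\mathcal H$ with $Te_n=f_n$ for all $n$. Put $h_n:=T^{-1}g_n$, so that $g_n=Th_n$. Then $\|e_n-h_n\|=\|T^{-1}(f_n-g_n)\|\le\|T^{-1}\|\,\|f_n-g_n\|$, whence $\sum_n\|e_n-h_n\|^2<\infty$; that is, $(h_n)$ is quadratically close to the orthonormal basis $(e_n)$. Since $T$ is a topological isomorphism, $(g_n)$ is complete (respectively a Riesz basis) if and only if $(h_n)$ is complete (respectively a Riesz basis). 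Hence it suffices to prove the assertion when $(f_n)$ is the orthonormal basis $(e_n)$ and $(g_n)$ is $(h_n)$.

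Next I would define a linear operator on the dense subspace $\lin\{e_n\}$ by $Se_n:=h_n$ and write $S=\mathscr I+\mathscr K$ with $\mathscr K e_n:=h_n-e_n$, where $\mathscr I$ denotes the identity operator on $\mathcal H$. Because $\sum_n\|h_n-e_n\|^2<\infty$, the Cauchy--Schwarz estimate $\|\mathscr K f\|\le\bigl(\sum_n\|h_n-e_n\|^2\bigr)^{1/2}\|f\|$ shows that $\mathscr K$ extends to a bounded operator on $\mathcal H$; it is in fact Hilbert--Schmidt, since $\sum_n\|\mathscr K e_n\|^2<\infty$, and therefore compact. Consequently $S=\mathscr I+\mathscr K$ is a Fredholm operator of index zero with closed range.

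Finally, completeness of $(h_n)=(Se_n)$ means precisely that $\Ran S$ is dense in $\mathcal H$; being also closed, $\Ran S=\mathcal H$, so $S$ is surjective, and since its index is zero this forces $\ker S=\{0\}$. Thus $S$ is a bijection, hence boundedly invertible by the bounded inverse theorem, and $(h_n)=(Se_n)$ is the image of an orthonormal basis under a bounded, boundedly invertible operator, i.e.\ a Riesz basis for $\mathcal H$; unwinding the reduction then gives that $(g_n)=(Th_n)$ is a Riesz basis as well. The one delicate point is the passage from dense range to invertibility: it rests on the compactness of $\mathscr K$ (which supplies index zero and a closed range via the Fredholm alternative) together with the completeness hypothesis, and indeed without completeness one obtains only a Riesz basis for the closed linear span of $(g_n)$, a possibly proper subspace of $\mathcal H$. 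This reproduces, with the Hilbert-space machinery made explicit, the stability statement of Theorem~15 in \cite[Chapter~1]{young}.
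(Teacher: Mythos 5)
Your proof is correct. Note, however, that the paper does not actually prove this statement: it simply records it as ``a simple consequence of Theorem~15 in \cite[Chapter~1]{young}'' and moves on. What you have written out is, in essence, the standard proof of that cited result: reduce to an orthonormal basis $(e_n)$ by the isomorphism $T$ defining the Riesz basis, observe that $\mathscr K e_n:=h_n-e_n$ is Hilbert--Schmidt (hence compact) because $\sum_n\|h_n-e_n\|^2<\infty$, and then use the Fredholm alternative to upgrade the dense range supplied by completeness to bounded invertibility of $\mathscr I+\mathscr K$. All the steps check out, including the Cauchy--Schwarz bound showing $\mathscr K$ is bounded on $\lin\{e_n\}$ and the identification of completeness of $(h_n)$ with density of $\Ran(\mathscr I+\mathscr K)$. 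The only imprecision is in your closing aside: if completeness is dropped, one does not in general still get a Riesz basis for the closed linear span of $(g_n)$, since $\mathscr I+\mathscr K$ may then have a nontrivial kernel, in which case the $g_n$ fail even to be $\omega$-independent; but this remark plays no role in the argument. As a self-contained substitute for the paper's external citation, your proof would serve perfectly well.
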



\begin{thebibliography}{99}



\bibitem{LevSargs1}B.~M.~Levitan and I.~S.~Sargsjan, \textit{Sturm--Liouville and Dirac operators} Kluwer Academic Publishers, 1991.

\bibitem{Marchenko}V.~A.~Marchenko, \textit{Sturm--Liouville operators and applications} Birkh\"auser, 1986.

\bibitem{Thaller}B.~Thaller, \textit{The Dirac equation} Springer, 1992.

\bibitem{GasLev1}M.~G.~Gasymov and B.~M.~Levitan, \textit{The inverse problem for the {D}irac system} Dokl. Akad. Nauk SSSR \textbf{167} (1966), 967--70 (in Russian).

\bibitem{GasLev2}M.~G.~Gasymov and B.~M.~Levitan, \textit{Determination of the Dirac system from the scattering phase} Dokl. Akad. Nauk SSSR \textbf{167} (1966), 1219--22 (in Russian).

\bibitem{Malamud1}M.~M.~Malamud, \textit{Borg-type theorems for first-order systems on a finite interval} Funct. Anal. Appl. \textbf{33} (1999), 64--8.

\bibitem{Malamud2}M.~M.~Malamud, \textit{Uniqueness questions in inverse problems for systems of differential equations on a finite interval} Trans. Moscow Math. Soc. \textbf{60} (1999), 173--224.

\bibitem{Malamud3}M.~Lesch and M.~Malamud, \textit{The inverse spectral problem for first order systems on the half line} Oper. Theory Adv. Appl. \textbf{117} (2000), 199--238.

\bibitem{ClarkGesz}S.~Clark and F.~Gesztesy, \textit{Weyl--{T}itchmarsh {M}-function asymptotics, local uniqueness results, trace formulas, and {B}org-type theorems for {D}irac operators} Trans. Amer. Math. Soc. \textbf{354} (2002), 3475--534.

\bibitem{KisMakGesz}F.~Gesztesy, A.~Kiselev and K.~A.~Makarov, \textit{Uniqueness results for matrix-valued {S}chr{\"o}dinger, {J}acobi, and {D}irac-type operators} Math. Nachr. \textbf{239/240} (2002), 103--45.

\bibitem{GeszOpV}F.~Gesztesy, R.~Weikard and M.~Zinchenko, \textit{Initial value problems and Weyl--Titchmarsh theory for Schr\"odinger operators with operator-valued potentials} {\it Preprint} arXiv:1109.1613.

\bibitem{Sakh1}A.~Sakhnovich, \textit{Dirac type and canonical systems: spectral and Weyl--Titchmarsh matrix functions, direct and inverse problems} Inverse Problems \textbf{18} (2002), 331--448.

\bibitem{Sakh2}A.~Sakhnovich, \textit{Dirac type system on the axis: explicit formulae for matrix potentials with singularities and soliton-positon interactions} Inverse Problems \textbf{19} (2003), 845--54.

\bibitem{Korot1}D.~Chelkak and E.~Korotyaev, \textit{Parametrization of the isospectral set for the vector-valued {S}turm--{L}iouville problem} J. Funct. Anal. \textbf{241} (2006), 359--73.

\bibitem{Korot2}D.~Chelkak and E.~Korotyaev, \textit{Weyl--{T}itchmarsh functions of vector-valued {S}turm--{L}iouville operators on the unit interval} J. Funct. Anal. \textbf{257} (2009), 1546--88.

\bibitem{MykDirac}S.~Albeverio, R.~Hryniv and Ya.~Mykytyuk, \textit{Inverse spectral problems for {D}irac operators with summable potentials} Russ. J. Math. Phys. \textbf{12} (2005), 406--23.

\bibitem{sturm}Ya.~V.~Mykytyuk and N.~S.~Trush, \textit{Inverse spectral problems for {S}turm{--}{L}iouville operators with matrix-valued potentials} Inverse Problems \textbf{26} (2010), 015009.

\bibitem{dirac1}Ya.~V.~Mykytyuk and D.~V.~Puyda, \textit{Inverse spectral problems for Dirac operators on a finite interval} J. Math. Anal. Appl. \textbf{386} (2012), 177--94.

\bibitem{trushzeros}N.~Trush, \textit{Asymptotics of singular values of entire matrix-valued sine-type functions} Mat. Stud. \textbf{30} (2008), 95--7.

\bibitem{myktrushzeros}Ya.~V.~Mykytyuk and N.~S.~Trush, \textit{Asymptotics of zeros for some entire sine-type functions acting in a Banach algebra} Math. Bull. Shevchenko Sci. Soc. \textbf{4} (2007), 214--9.

\bibitem{mykhrynzeros}R.~O.~Hryniv and Ya.~V.~Mykytyuk, \textit{On zeros of some entire functions} Trans. Amer. Math. Soc. \textbf{361} (2009), 2207--23.

\bibitem{kuchment}P.~Kuchment, \textit{Quantum graphs: an introduction and a brief survey} Analysis on graphs and its applications, Proc. Sympos. Pure Math. \textbf{77} (2008), 291--312.

\bibitem{cauchy}N.~Trush, \textit{Solutions of the {C}auchy problem for factorized {S}turm--{L}iouville equation in a {B}anach algebra} Mat. Stud. \textbf{31} (2008), 75--82.

\bibitem{5auth}D.~Alpay, I.~Gohberg, M.~A.~Kaashoek, L.~Lerer and A.~L.~Sakhnovich, \textit{Krein systems and canonical systems on a finite interval: Accelerants with a jump discontinuity at the origin and continuous potentials} Integr. Equ. Oper. Theory \textbf{68(1)} (2010), 115--50.

\bibitem{geszHerg}F.~Gesztesy and S.~Tsekanovskii, \textit{On matrix-valued Herglotz functions} Math. Nachr. \textbf{218} (2000), 61--138.

\bibitem{Sadovnichy}V.~A.~Sadovnichiy, \textit{Theory of operators} Consultants Bureau, 1991.

\bibitem{MykFact1}Ya.~V.~Mykytyuk, \textit{Factorization of Fredholm operators} Mat. Stud. \textbf{20(2)} (2003), 185--99 (in Ukrainian).

\bibitem{MykFact2}Ya.~V.~Mykytyuk, \textit{Factorization of Fredholm operators in operator algebras} Mat. Stud. \textbf{21(1)} (2004), 87--97 (in Ukrainian).

\bibitem{kreinvolterra}I.~Gokhberg and M.~Krein, \textit{Theory of Volterra operators in Hilbert space and its applications} Nauka, 1967 (in Russian).

\bibitem{young}R.~M.~Young, {\it An introduction to nonharmonic Fourier series} Revised 1st Edition, Academic Press, 2001.

\end{thebibliography}
\end{document}